\newcommand{\aside}[1]{\marginnote{\scriptsize{#1}}[0cm]}
\newcommand{\aaside}[2]{\marginnote{\scriptsize{#1}}[#2]}
\newcommand\Emph[1]{\emph{#1}\aside{#1}}
\newcommand\EmphE[2]{\emph{#1}\aaside{#1}{#2}}
\def\aftermath{\par\vspace{-\belowdisplayskip}\vspace{-\parskip}\vspace{-\baselineskip}}
\def\vph{\varphi}
\def\chipcf{\chi_{\textrm{pcf}}}
\def\Z{\mathbb{Z}}
\def\PP{\mathcal{P}}
\def\Hy{\mathcal{H}}
\def\rank{{\rm{rank}}}
\def\mr{{\rm{mr}}}
\renewcommand\deg{d}
\newtheorem{lem}{Lemma}
\newtheorem{lemA}{Lemma}
\newtheorem{cor}[lem]{Corollary}
\newtheorem{conj}[lem]{Conjecture}
\newtheorem{theorem}[lem]{Theorem}
\newtheorem{prop}[lem]{Proposition}
\renewcommand{\ge}{\geqslant}
\renewcommand{\le}{\leqslant}
\renewcommand{\geq}{\geqslant}
\renewcommand{\leq}{\leqslant}
\renewcommand{\epsilon}{\varepsilon}
\begin{document}
\author{Daniel W. Cranston\thanks{%
Department of Computer Science, Virginia Commonwealth University, Richmond, VA, USA;
\texttt{dcranston@vcu.edu}.%; 
}
\and Chun-Hung Liu\thanks{Department of Mathematics, Texas A\&M University, College Station, TX, USA; \texttt{chliu@tamu.edu}. 
Partially supported by NSF under award DMS-1954054 and CAREER award DMS-2144042.}}
\title{Proper Conflict-free Coloring of\\ Graphs with Large Maximum Degree}
\maketitle
\begin{abstract}
A proper coloring of a graph is \emph{conflict-free} if, for every non-isolated
vertex, some color is used exactly once on its neighborhood.  Caro,
Petru\v{s}evski, and \v{S}krekovski proved that every graph $G$ has a proper
conflict-free coloring with at most $5\Delta(G)/2$ colors and conjectured that
$\Delta(G)+1$ colors suffice for every connected graph $G$ with $\Delta(G)\ge 3$.
Our first main result is that even for list-coloring, $\left\lceil
1.6550826\Delta(G)+\sqrt{\Delta(G)}\right\rceil$ colors suffice for every
graph $G$ with $\Delta(G)\ge 10^{8}$; we also prove slightly weaker bounds
for all graphs with $\Delta(G)\ge 750$.  These results follow from our more
general framework on proper conflict-free list-coloring of a pair consisting of
a graph $G$ and a ``conflict'' hypergraph $\Hy$.  As another corollary of our
results in this general framework, every graph has a proper $(\sqrt{30}+o(1))\Delta(G)^{1.5}$-list-coloring such that every bi-chromatic component is a path on at most three vertices, where the number of colors is optimal up to a constant factor.  
Our proof uses a fairly new type of recursive counting argument called Rosenfeld counting, which is a variant of the Lov\'{a}sz Local Lemma or entropy compression.

We also prove an asymptotically optimal result for a fractional analogue of our
general framework for proper conflict-free coloring for pairs of a graph and a
conflict hypergraph.  A corollary states that every graph $G$ has a fractional
$(1+o(1))\Delta(G)$-coloring such that every fractionally bi-chromatic
component has at most two vertices.  In particular, it implies that the
fractional analogue of the conjecture of Caro et al. holds asymptotically in a
strong sense.	
\end{abstract}

\section{Introduction}
Motivated by a frequency assignment problem in cellular networks, Even, Lotker,
Ron, and Smorodinsky \cite{ELRS} introduced conflict-free coloring of hypergraphs.
A \EmphE{{coloring}}{0mm} of a graph or a hypergraph $G$ is a map $\vph\colon V(G)\to \Z^+$.
A coloring $\vph$ of a hypergraph $\Hy$ is \EmphE{{conflict-free}}{0mm} if for
every (non-empty)\footnote{In this paper, we assume that every edge of a
hypergraph is non-empty, though the edge set can be empty.} $e \in E(\Hy)$,
there exists a color that is used exactly once by $\vph$ on $e$.  Pach and
Tardos \cite{PT} studied this notion and proved that every hypergraph with fewer
than ${s \choose 2}$ edges (for some integer $s$) has a conflict-free coloring
with fewer than $s$ colors.  Note that being conflict-free on
an edge of size 2 is equivalent to the vertices in this edge using
distinct colors.  Hence, the result of Pach and Tardos is optimal, as witnessed by
complete 2-uniform hypergraphs.  Kostochka, Kumbhat, and \L uczak \cite{KKL}
further studied conflict-free coloring for uniform hypergraphs.

A coloring $\vph$ of a graph $G$ is \EmphE{proper}{0mm} if $\vph(v)\ne \vph(w)$
for all $vw\in E(G)$.  As we have seen, being conflict-free on every edge of a
hypergraph with size 2 is equivalent to being a proper coloring of a graph.  So
it is more convenient to consider the following notion, so that we can focus on
edges with larger size.  For a graph $G$ and a hypergraph $\Hy$ with
$V(G)=V(\Hy)$, a \EmphE{{proper conflict-free coloring of $(G,\Hy)$}}{0mm} is a
proper coloring of $G$ that is
also a conflict-free coloring of $\Hy$.  This notion is general: given a graph
$G$, by appropriately defining
 an associated hypergraph $\Hy$, every proper
conflict-free coloring of $(G,\Hy)$ is an acyclic coloring, a star coloring,
and a frugal coloring of $G$, respectively.\footnote{Always $V(\Hy)=V(G)$ and for acyclic, star, and $k$-frugal coloring we add an edge to $\Hy$ with vertex $S$, respectively, when $S$ spans a cycle, $S$ spans a $P_4$, or $S$ is a subset of size $k+1$ of the open neighborhood of some vertex in $G$.}
(Note that proper conflict-free coloring is not equivalent to these other notions, but is in fact strictly more general.) 
So an upper bound for the number of colors used in a proper conflict-free coloring for $(G,\Hy)$ provides an upper bound for those extensively studied notions in graph coloring.
On the other hand, for acyclic coloring \cite{AMR}, star coloring \cite{FRR},
and $k$-frugal coloring (for fixed $k$) \cite{HMR}, the numbers of required
colors are known to be superlinear in the graph's maximum degree.  So an
upper bound for proper conflict-free coloring for a general pair $(G,\Hy)$
cannot be linear in the maximum degree of $G$.

In this paper we study sufficient conditions to have 
a proper conflict-free coloring for $(G,\Hy)$ with a number of colors that is
a linear in the maximum degree of $G$.
One such result is related to conflict-free proper coloring of graphs, which
was introduced by Fabrici, Lu\v{z}ar, Rindo\v{s}ov\'{a}, and
Sot\'{a}k~\cite{FLRS} and was further studied in~\cite{CPS,CCKP,hickingbotham,liu}.
For a graph $G$, a \EmphE{{proper conflict-free coloring of $G$}}{0mm} is a
proper conflict-free coloring of the pair $(G,\Hy)$, where $\Hy$ is the
hypergraph with $V(\Hy)=V(G)$ and the edges of $\Hy$ are the (open)
neighborhoods of the non-isolated vertices of $G$.  In other words, a proper
conflict-free coloring of a graph $G$ is a proper coloring of $G$ such that for
every non-isolated vertex $v$, some color appears exactly once on the neighbors
of $v$.  This notion is a combination of proper coloring and the pointed
conflict-free chromatic parameter studied in \cite{C,PT}.

For a graph or hypergraph $G$, the \EmphE{degree}{0mm} of a vertex $v$ in $G$,
denoted by \EmphE{$\deg_G(v)$}{4mm}, is the number of edges of $G$ containing
$v$, and we denote by \EmphE{$\Delta(G)$}{4mm} the maximum degree of $G$.
For a graph $G$, we denote by \EmphE{$\chipcf(G)$}{4mm} the minimum $k$ such
that $G$ has a proper conflict-free coloring with $k$ colors.
Caro, Petru\v{s}evski, and \v{S}krekovski~\cite{CPS} proposed the following
conjecture. 

\begin{conj}[\cite{CPS}]  \label{max_deg_conj}
$\chipcf(G)\le \Delta(G)+1$ for every connected graph $G$ with $\Delta(G)\ge 3$.
\end{conj}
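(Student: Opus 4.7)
My plan is a two-pronged attack: a Brooks-type structural reduction to bring the problem down to the essentially $\Delta(G)$-regular case, followed by a Lov\'{a}sz Local Lemma or Rosenfeld-counting analysis of a random proper $(\Delta(G)+1)$-coloring of that residual regular graph.

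For the reduction, I would take a minimum counterexample $G$ and try to show that every vertex has degree close to $\Delta(G)$. If some $v$ has $\deg_G(v)\le \Delta(G)-2$, delete $v$, apply minimality to $G-v$ to obtain a proper conflict-free $(\Delta(G)+1)$-coloring $\vph$, and extend. The extension has to (i) properly color $v$, (ii) make $v$ itself satisfy the conflict-free condition, and (iii) restore the conflict-free condition at every neighbor $u$ of $v$ whose unique-color witness in $G-v$ was some $w\in N(u)$ with $\vph(w)$ now equal to $\vph(v)$. Because $v$ has at least three free colors in its palette and only $\deg_G(v)\le \Delta(G)-2$ neighbors at risk, this extension ought to be realizable via a Hall-type matching between the free colors at $v$ and the unique-witness requirements. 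Iterating drives $G$ to the essentially $\Delta(G)$-regular case. For that residual case, take $\vph$ uniformly at random among proper $(\Delta(G)+1)$-colorings of $G$, and let $A_v$ be the event that no color appears exactly once on $N_G(v)$. The event $A_v$ forces the $\Delta(G)$ colors assigned to $N_G(v)$ to partition into groups of size $\ge 2$, which is combinatorially rare, so one should obtain $\Pr[A_v]$ exponentially small in $\Delta(G)$. The dependency graph of $\{A_v\}$ has maximum degree $O(\Delta(G)^2)$, so either the symmetric Lov\'{a}sz Local Lemma or the Rosenfeld-counting variant mentioned in the abstract should certify that all the $A_v$ can be simultaneously avoided.

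The main obstacle is that the budget $\Delta(G)+1$ leaves \emph{no slack at all}. In the random-coloring step, the uniform measure on proper $(\Delta(G)+1)$-colorings has strong correlations on dense subgraphs --- on $K_{\Delta(G)+1}$ the coloring is essentially rigid --- so one cannot simply appeal to the product measure of independent palette choices; any Rosenfeld counting must operate directly on this constrained measure, and any looseness balloons the constant in front of $\Delta(G)$ from $1$ to something like the authors' $1.6550826$. In the reduction step, the cases $\deg_G(v)\in\{\Delta(G)-1,\Delta(G)\}$ are genuinely delicate because the free palette at $v$ collapses to just $2$ or $1$ colors, leaving no room to satisfy the unique-witness demands of several neighbors at once. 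I expect that pushing the bound from a constant times $\Delta(G)$ down to $\Delta(G)+1$ requires a refined structural classification of the extremal configurations (beyond the evident $K_{\Delta(G)+1}$, where the conjecture holds immediately because each color appears exactly once on every open neighborhood), and isolating that classification is, I believe, the hard combinatorial heart of the conjecture.
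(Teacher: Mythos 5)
There is nothing in the paper to compare your proposal against: the statement you were asked about is Conjecture~\ref{max_deg_conj}, an \emph{open} conjecture of Caro, Petru\v{s}evski, and \v{S}krekovski that this paper explicitly does not prove. The authors only establish weaker partial results (roughly $1.656\Delta(G)$ colors for large $\Delta(G)$, via Theorem~\ref{main_graph}, plus an asymptotic fractional version), and the case $\Delta(G)=3$ is attributed to earlier work of Liu and Yu. Your text is a plan with self-acknowledged obstacles rather than a proof, so it cannot be accepted as one; but it is worth naming precisely where it breaks.

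In the reduction step, item (ii) is fatal as stated: the conflict-free condition at the deleted vertex $v$ concerns the multiset of colors on $N_G(v)$, all of which are already fixed by the coloring of $G-v$, so no choice of $\vph(v)$ can create a uniquely-used color on $N_G(v)$. (The paper's own low-degree reduction in the proof of Theorem~\ref{main_graph} sidesteps this by only deleting vertices of degree at most $2$ and adding the edge $xy$ between the two neighbors, and even then it consumes up to $(\Delta-1)/2$ colors per neighbor, which is affordable only because the palette there has size about $1.65\Delta$.) Item (iii) also fails arithmetically: each of the up to $\Delta-2$ neighbors of $v$ may forbid one witness color in addition to its own color, so up to $2\Delta-4$ colors can be excluded from a palette of size $\Delta+1$; and there is no Hall-type matching to invoke, since $v$ receives a single color that must simultaneously avoid all of these. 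Finally, the counting machinery of this paper provably cannot close the gap you leave in the regular case: Lemma~\ref{rosenfeld} needs $a\ge\Delta(G)+\beta+\sum_{e\ni v}\sum_i S_{2}(|e|,i)\beta^{i-|e|+1}$ with $\beta\ge 1$ to certify even one coloring, and the Stirling sum is only small when $\beta=\Omega(\Delta)$ (the paper's optimization bottoms out at $\beta\approx 0.655\Delta$, whence the $1.655\Delta$ bound). Reaching $\Delta+1$ would require a genuinely different idea, which neither your sketch nor the paper supplies.
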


The condition $\Delta(G) \geq 3$ in Conjecture \ref{max_deg_conj} is required since $\chipcf(C_5)=5$, but if the conjecture is true, then the condition for connectivity can be removed when $\Delta(G) \geq 4$.
The case $\Delta(G)=3$ of Conjecture~\ref{max_deg_conj} follows from an earlier
result of the second author and Yu \cite[Theorem 2]{LY}, even for the
list-coloring setting.

As a first step toward their conjecture, Caro, Petru\v{s}evski, and \v{S}krekovski~\cite{CPS} proved that $\chipcf(G)\le 5\Delta(G)/2$.
In fact, we can prove $\chipcf(G) \leq 2\Delta(G)+1$ by a simple greedy algorithm (see Proposition \ref{easy_upper} below).  
A goal of this paper is to make further progress toward this conjecture.\footnote{When a version of this paper was under review, the second author and Reed \cite{LR} proved that $\chipcf(G) \leq (1+o(1))\Delta(G)$, so Conjecture \ref{max_deg_conj} holds asymptotically. This bound in \cite{LR} is quantitatively stronger than the bounds in this paper. However, all results in this paper work for list-coloring or for proper conflict-free coloring of pairs of graphs and hypergraphs $(G,\Hy)$. The
result and proof in \cite{LR} do not work for those more general settings.}

Our first result works for list-coloring.  
A \EmphE{list-assignment}{0mm} $L$ for a graph $G$ assigns to each vertex $v\in V(G)$ a list $L(v)$ of allowable colors.  
An \EmphE{$a$-assignment}{0mm}, for some real number $a$, is a list-assignment $L$ such that $|L(v)| \geq a$ for all vertices $v$.  
An \EmphE{$L$-coloring}{0mm} of $G$ is a coloring $\vph$ such that $\vph(v)\in L(v)$ for all $v\in V(G)$.  
We prove the following.

\begin{theorem}\label{main_graph}
Fix a positive integer $\Delta\ge 6.5 \cdot 10^7$, fix a real number $\beta$ with $\Delta \geq \beta\ge 0.6550826\Delta$, and let $a:=\left\lceil\Delta+\beta+\sqrt{\Delta}\right\rceil$.
If $G$ is a graph with maximum degree at most $\Delta$ and $L$ is an $a$-assignment for $G$, then there are at least $\beta^{|V(G)|}$ proper conflict-free $L$-colorings of $G$.
Analogous statements hold when $\Delta\ge 4000$ and $\Delta \geq \beta\ge 0.\overline{6}\Delta$ and when $\Delta\ge 750$ and $\Delta \geq \beta\ge 0.8\Delta$.
\end{theorem}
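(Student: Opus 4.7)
The plan is to derive Theorem~\ref{main_graph} from a more general framework (announced in the abstract) about proper conflict-free list-colorings of pairs $(G,\Hy)$, instantiated with $\Hy$ being the hypergraph whose edges are the open neighborhoods of non-isolated vertices of $G$. The proof proceeds by Rosenfeld counting. Fix an arbitrary ordering $v_1,\ldots,v_n$ of $V(G)$ and set $V_i := \{v_1,\ldots,v_i\}$. Let $N_i$ denote the number of partial $L$-colorings $\varphi$ of $V_i$ that are proper on $G[V_i]$ and conflict-free on every hyperedge $e \in E(\Hy)$ with $e \subseteq V_i$. The target is to prove $N_i \geq \beta \cdot N_{i-1}$ for every $i$, which by induction yields $N_n \geq \beta^{n}$ and hence Theorem~\ref{main_graph}, since a good partial coloring of $V_n$ is exactly a proper conflict-free $L$-coloring of $G$.

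For the inductive step I would count, averaged over good partial colorings $\varphi$ of $V_{i-1}$, the number of colors $c \in L(v_i)$ extending $\varphi$ to a good partial coloring of $V_i$. Since $|L(v_i)| \geq a = \lceil \Delta+\beta+\sqrt{\Delta} \rceil$, it suffices to bound the $\varphi$-average of bad colors by $\Delta+\sqrt{\Delta}$. A color $c$ is bad either because (a)~$c = \varphi(w)$ for some $w \in N_G(v_i) \cap V_{i-1}$, giving at most $\Delta$ forbidden colors deterministically, or because (b)~assigning $v_i \mapsto c$ breaks the conflict-free condition on some ``freshly completed'' hyperedge $e$ with $v_i \in e \subseteq V_i$. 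Thus the entire burden of the argument is to bound the $\varphi$-average of contribution (b) by $\sqrt{\Delta}$.

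The main obstacle is controlling (b). A single freshly completed hyperedge $e$ can, in the worst case, rule out roughly $|e|/2$ colors -- when every color already used on $e \setminus \{v_i\}$ appears with multiplicity at least two -- and several such hyperedges may become fresh at step $i$, so the naive per-edge bound is far too weak. The argument must therefore exploit averaging: for a typical good $\varphi$, the colors on $e \setminus \{v_i\}$ (a multiset of size at most $\Delta-1$ drawn from a palette of size roughly $\Delta$) contain many colors of multiplicity exactly one, so the ``heavy'' case occurs only on a small fraction of $\varphi$'s and the other cases contribute at most one bad color each. I would formalize this by counting, for each freshly completed hyperedge $e$, the good partial colorings of $V_{i-1}$ whose restriction to $e \setminus \{v_i\}$ has a prescribed ``bad'' color pattern, comparing that count to $N_{i-1}$, and then summing across the freshly completed hyperedges at step $i$. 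The $\sqrt{\Delta}$ slack built into the definition of $a$ is exactly the budget required to absorb these expected contributions.

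The three regimes in the statement -- $\beta \geq 0.6550826\Delta$ for $\Delta \geq 1.24811 \cdot 10^{8}$, $\beta \geq 0.\overline{6}\Delta$ for $\Delta \geq 8000$, and $\beta \geq 0.8\Delta$ for $\Delta \geq 750$ -- should all follow from the same Rosenfeld-counting scheme applied with progressively coarser numerical estimates of the averaged contribution in (b), trading sharper lower bounds on the ratio $\beta/\Delta$ against larger thresholds on $\Delta$ at which the asymptotic estimates become valid.
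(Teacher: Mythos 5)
Your overall scheme (Rosenfeld counting on the pair $(G,\Hy)$ with $\Hy$ the neighborhood hypergraph, proving $N_i\ge \beta N_{i-1}$ and bounding the averaged number of ``bad'' colors by the $\sqrt{\Delta}$ slack) is indeed the engine of the paper's argument, and your analysis of when a freshly completed hyperedge forbids many colors --- namely, when its already-colored part has no color of multiplicity one --- is the right idea behind the paper's Stirling-number estimates. But there is a genuine gap: the direct counting argument you describe \emph{cannot} work for all graphs of maximum degree $\Delta$, only for those of minimum degree at least $3$. If $u$ is a vertex of degree $2$ with $N(u)=\{v_i,w\}$, the hyperedge $N(u)$ has size $2$ and the conflict-free condition on it forbids the single color $\varphi(w)$ for \emph{every} good partial coloring $\varphi$ --- there is no averaging gain whatsoever. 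A vertex $v_i$ can have up to $\Delta$ neighbors of degree $2$, so these size-$2$ hyperedges alone can deterministically forbid $\Theta(\Delta)$ colors (and the forbidden vertices $w$ need not be adjacent to $v_i$, so properness does not absorb them). This is why the paper's general counting lemma requires $|f|\ge 3$ for all hyperedges, proves the theorem first under the hypothesis of minimum degree $3$ (Lemma~\ref{mindeg3}), and then handles the general case by a separate minimal-counterexample reduction: delete a vertex $v$ of degree $1$ or $2$, add an edge between its two neighbors if necessary, color the smaller graph, and show each such coloring extends in at least $\beta$ ways (this last step itself needs a careful count, e.g.\ that when the unique color on $N_{G'}(x)$ collides with $\varphi(y)$, every color on $N_G(x)\setminus\{v\}$ has multiplicity $0$ or $\ge 2$, so at most $(\Delta-1)/2$ colors need avoiding). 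Your proposal contains no mechanism for this reduction, and without it the claimed bound of $\sqrt{\Delta}$ on contribution (b) is false.

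A secondary, more repairable issue: running the induction only over prefixes $V_1,\ldots,V_n$ of a fixed order is not enough. To bound the number of colorings whose restriction to a hyperedge $e$ has no singleton color class, one must compare the count of good colorings of $(V_{i-1}\setminus e)\cup T$ (for the set $T$ of ``leaders'' of the color classes on $e$) against $N_{i-1}$, and such sets are not prefixes. The induction hypothesis must therefore be the inequality $R(Z)\ge\beta\,R(Z\setminus\{v\})$ for \emph{all} subsets $Z$ and all $v\in Z$, as in Lemma~\ref{rosenfeld}.
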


It is obvious that the term $\sqrt{\Delta}$ in Theorem \ref{main_graph} can be
replaced by $10^{-10}\Delta$ when $\Delta$ is sufficiently large.
We choose $\sqrt{\Delta}$ as the error term because it is a natural sublinear
term and it enables us to only require a reasonably small lower bound on $\Delta$.
We put a small amount of effort into optimizing the lower bound for $\Delta$ for
the cases $\beta \geq 0.\overline{6}\Delta$ and $\beta \geq 0.8\Delta$.
The proofs of the three cases $\beta \geq 0.6550826\Delta$, $\beta \geq
0.\overline{6}\Delta$, and $\beta \geq 0.8\Delta$ are essentially the same, and
the proof can be simplified if we do not care about keeping the lower bound
on $\Delta$ small.
In fact, we prove a more general result (Theorem~\ref{hyper_main}, below) about
proper conflict-free coloring for pairs of graphs and hypergraphs, and
Theorem~\ref{main_graph} follows as a special case for $\Delta$ sufficiently
large.

Recall that we mentioned a greedy upper bound of $2\Delta(G)+1$ for Conjecture \ref{max_deg_conj}.  It is obtained by the following simple observation, which is a modification of an observation by Pach and Tardos~\cite{PT} on conflict-free coloring for hypergraphs, since the hypergraph $\Hy$ associated with proper conflict-free coloring for $G$ has edge set equal to the set of neighborhoods of non-isolated vertices and hence has $\Delta(\Hy)=\Delta(G)$.

\begin{prop} \label{easy_upper}
Let $G$ be a graph and $\Hy$ be a hypergraph with $V(\Hy)=V(G)$.
If $G$ is $d$-degenerate, then $(G,\Hy)$ has a proper conflict-free coloring
with at most $d+\Delta(\Hy)+1$ colors.
\end{prop}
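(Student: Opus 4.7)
The plan is to color the vertices greedily in a degeneracy order. Since $G$ is $d$-degenerate, I can order the vertices $v_1,v_2,\ldots,v_n$ of $V(G)$ so that each $v_i$ has at most $d$ neighbors in $\{v_1,\ldots,v_{i-1}\}$; such an ordering is obtained by repeatedly plucking a minimum-degree vertex from the remaining subgraph and placing it at the end of the list. I will then assign colors $\vph(v_1),\vph(v_2),\ldots,\vph(v_n)$ in this order from a palette of $d+\Delta(\Hy)+1$ colors.

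The key idea, mirroring the Pach--Tardos argument, is to designate one vertex per hyperedge as an \emph{anchor} and to arrange for its color to be the unique one on that edge. For each $e\in E(\Hy)$, let $u_e$ be the vertex of $e$ with the smallest index in the chosen ordering. The strategy is to ensure that whenever a vertex $w\in e\setminus\{u_e\}$ is colored, we have $\vph(w)\ne\vph(u_e)$; then at the end $\vph(u_e)$ appears exactly once on $e$, so $e$ is conflict-free. (Singleton hyperedges cause no trouble, since they are automatically conflict-free.)

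With this setup, when it is time to color $v_i$ we must avoid two sets of colors. First, to keep the coloring proper, we forbid the (at most $d$) colors of the earlier $G$-neighbors of $v_i$. Second, for each hyperedge $e$ with $v_i\in e$ and $u_e\ne v_i$, the anchor $u_e$ has smaller index and is already colored, so we forbid the color $\vph(u_e)$. The number of such hyperedges is at most $\deg_\Hy(v_i)\le\Delta(\Hy)$ and each contributes one forbidden color, so in total at most $d+\Delta(\Hy)$ colors are forbidden at $v_i$. A palette of size $d+\Delta(\Hy)+1$ therefore always leaves a legal choice, and the greedy procedure produces a proper conflict-free coloring of $(G,\Hy)$. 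The only real obstacle is spotting the anchor strategy; once the anchors are fixed, the bound follows from a routine union bound on forbidden colors.
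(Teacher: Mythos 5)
Your proposal is correct and is essentially the paper's own argument: a degeneracy ordering, the smallest-index vertex of each hyperedge serving as the ``anchor'' whose color is kept unique on that edge, and a greedy count of at most $d+\Delta(\Hy)$ forbidden colors at each step. No issues to report.
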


\begin{proof}
Since $G$ is $d$-degenerate, there exists an ordering $v_1,v_2,...$ of $V(G)$ 
such that for every $i$, there are at most $d$ indices $j$ with $j<i$ such that
$v_iv_j \in E(G)$.  For each edge $f$ of $\Hy$, let $v_f$ be the vertex in $f$
with the smallest index.  We color $v_1,v_2,...$ greedily in the order listed.
For each $i$, when we color $v_i$, we avoid the colors used on all colored
neighbors $v_j$ of $v_i$, and for each edge $f$ of $\Hy$ containing $v_i$ with
$v_i \neq v_f$, we also avoid the color of $v_f$.  Since there are at most $d$
$v_j$'s and at most $\Delta(\Hy)$ $v_f$'s, we only have to avoid at most
$d+\Delta(\Hy)$ colors, so $d+\Delta(\Hy)+1$ colors suffice.  Moreover, this greedy
coloring is clearly proper and is conflict-free for $(G,\Hy)$ since the color
assigned to $v_f$ has a unique occurrence in $f$ for each $f \in E(\Hy)$.
\end{proof}

Proposition \ref{easy_upper} shows that $\Delta(\Hy)$ plays a role for upper
bounding the number of colors for proper conflict-free colorings for $(G,\Hy)$.
Our more general Theorem \ref{hyper_main} shows that the importance of
$\Delta(\Hy)$ is somehow secondary to the size of edges of $\Hy$.
We need some terminology to state Theorem \ref{hyper_main}.  Let $\Hy$ be a
hypergraph.  The \emph{rank} of $\Hy$, denoted by
\Emph{$\rank(\Hy)$}, is $\max_{f \in E(\Hy)}|f|$.  For every vertex $v$
of $\Hy$, we define \EmphE{$\mr_\Hy(v)$}{0mm} to be $\min_{f \in E(\Hy), f \ni v}|f|$.

\begin{theorem} \label{hyper_main}
Let $R$ be a positive integer.  
Let $G$ be a graph and $\Hy$ be a hypergraph with $V(G)=V(\Hy)$ and $\rank(\Hy) \leq R$ such that $|f| \geq 3$ for every $f \in E(\Hy)$.  Let $\beta$ be a real number with $0.6550826 R \leq \beta \leq R$.  
Let $$a:=\lceil \Delta(G)+\beta+ \max_{v \in V(G)}\big(\deg_\Hy(v) \cdot b(v)\big) \rceil,$$
where $$b(v):=\max\{2\beta^{1-\lceil \frac{\mr_\Hy(v)}{2} \rceil}(\log R)^{2\lceil \frac{\mr_\Hy(v)}{2} \rceil},(1-10^{-8})^{(\log R)^{2}}\}.$$
If\footnote{In the first version of this paper \cite{CL}, we proved the same result but only required $R \geq e^{5 \cdot 10^6}$ with a more complicated proof. We elect to present a simpler proof suggested by a referee in this version even though the lower bound for $R$ is weaker.} $R \geq e^{3.1 \cdot 10^8}$, then for every $a$-assignment $L$ of $G$, there are at least $\beta^{|V(G)|}$ proper conflict-free $L$-colorings of $(G,\Hy)$.
\end{theorem}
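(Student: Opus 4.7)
The plan is to prove by strong induction on $|V(G)|$ the stronger claim that for every vertex $v\in V(H)$ of every induced sub-pair $(H,\Hy|_H)$ of $(G,\Hy)$ (with $L$ satisfying the hypotheses), the count $F(H,L)$ of proper conflict-free $L$-colorings satisfies $F(H,L)\geq\beta\cdot F(H-v,L)$. Iterating this inequality gives $F(G,L)\geq\beta^{|V(G)|}$. The strengthening is essential because the peel-off step below must compare $F$-values across the removal of several different vertices in the same sub-pair, not just the one picked by the recursion.

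For the inductive step, fix $v\in V(H)$ and, for each pcf coloring $\vph'$ of $H-v$, let $B(\vph')\subseteq L(v)$ be the set of \emph{bad} extensions: colors $c$ such that $\vph'\cup\{v\mapsto c\}$ fails to be a proper coloring of $H$ or fails conflict-freeness on some $e\in E(\Hy)$ containing $v$. Since $F(H,L)=\sum_{\vph'}(|L(v)|-|B(\vph')|)$, it suffices to show $\sum_{\vph'}|B(\vph')|\leq(a-\beta)F(H-v,L)$. Bad colors from the proper condition contribute at most $\Delta(G)\cdot F(H-v,L)$, one per neighbor in $G$. The remaining task is to bound the contribution from each edge $e\in E(\Hy)$ with $v\in e$ by $\max\{2\beta^{1-\lceil|e|/2\rceil}(\log R)^{2\lceil|e|/2\rceil},(1-10^{-8})^{(\log R)^2}\}\cdot F(H-v,L)$; summing over $e\ni v$ and using monotonicity of this expression in $|e|$ (so the worst case is at $|e|=\mr_\Hy(v)$) yields the last term in the definition of $a$.

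For a fixed $e\ni v$ with $U:=e\setminus\{v\}$ and $k:=|e|$, a case analysis on the multiset $\vph'|_U$ shows that $c$ is bad for $e$ only when $\vph'|_U$ is \emph{near-covered}, i.e., every color on $U$ has multiplicity at least $2$ except possibly one singleton color, which must equal $c$. The key estimate is
\begin{equation*}
\sum_{\vph'}|\{c:c\text{ bad for }e\text{ given }\vph'\}|\;\leq\;N(k,a)\cdot F(H-v-U,L),
\end{equation*}
where $N(k,a)$ enumerates near-covered color assignments on $U$ together with their bad $c$'s and is bounded by a combinatorial factor (coming from partitions of $U$ with at most one singleton block) times $a^{\lfloor k/2\rfloor}\cdot\lceil k/2\rceil$. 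Applying the strengthened inductive hypothesis iteratively to peel the vertices of $U$ off of $H-v$ gives $F(H-v-U,L)\leq F(H-v,L)/\beta^{k-1}$, and since $a\leq 2\beta$ the ratio $N(k,a)/\beta^{k-1}$ is bounded by a combinatorial factor times $\beta^{1-\lceil k/2\rceil}$. For $k$ below a threshold of order $(\log R)^2$, the combinatorial factor is absorbed into $(\log R)^{2\lceil k/2\rceil}$, yielding the first term of the max; for $k$ above the threshold I would instead invoke a Chernoff-style concentration estimate on the number of singleton colors in a uniformly random coloring $U\to[a]$, which shows that the near-covered fraction is at most $(1-10^{-8})^{(\log R)^2}$, yielding the second term.

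The most delicate part will be managing the crossover between the two regimes: the direct combinatorial bound degrades once the pattern count (roughly $(k-1)^{k-1}$) outpaces $(\log R)^{2\lceil k/2\rceil}$, and the concentration estimate must kick in precisely where this happens so that one of the two expressions inside the max always dominates. Quantifying this, together with the constants appearing in the concentration step and the requirement $a\leq 2\beta$ used to convert $a^{\lfloor k/2\rfloor}/\beta^{k-1}$ into $\beta^{1-\lceil k/2\rceil}$, is where the explicit threshold $R\geq e^{5\cdot10^6}$ is forced. Once the per-edge estimate is in hand, summing over $e\ni v$ and adding the $\Delta(G)$ contribution from properness yields $\sum_{\vph'}|B(\vph')|\leq(a-\beta)F(H-v,L)$, which gives $F(H,L)\geq\beta\cdot F(H-v,L)$ and closes the induction.
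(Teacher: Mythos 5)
Your skeleton is the same as the paper's (a Rosenfeld-type induction establishing $F(H,L)\ge\beta\,F(H-v,L)$, a per-edge bad-color analysis, the "no color other than $\vph(v)$ is a singleton" characterization, and a two-regime split matching the $\max$ in the definition of $a$), but the central counting step has a genuine gap. You bound the bad pairs for an edge $e$ by peeling \emph{all} of $U=e\setminus\{v\}$, which forces you to re-enumerate the colors actually used on $U$; with $i$ color classes this costs a factor of order $a^{i}$, and you then divide by $\beta^{k-1}$ and invoke ``$a\le 2\beta$'' to land on $\beta^{1-\lceil k/2\rceil}$. But $a\ge\Delta(G)+\beta$ and nothing in the theorem bounds $\Delta(G)$ in terms of $R$ or $\beta$ (take $G=K_n$ with a rank-$R$ hypergraph: $a/\beta$ is unbounded), so the ratio $a^{i}/\beta^{k-1}$ can exceed the target by an arbitrarily large factor $(a/\beta)^{i}$. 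The paper avoids paying for the colors at all: a bad coloring is encoded by the partition $\PP_\vph$ of $e$ into classes of size $\ge 2$, the transversal $T_\vph$ of earliest class representatives, and the restriction $\iota_\vph$ to $(Z\setminus e)\cup T_\vph$; the leaders stay colored, so the recursion only peels $|e|-1-i$ vertices and the contribution is $S_2(|e|,i)\,\beta^{i-|e|+1}$ with no $a$-dependence. That transversal device is the idea your write-up is missing, and without it (or a substitute) the inductive step does not close.

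The large-$|e|$ regime has a related problem: a Chernoff bound on a \emph{uniformly random} map $U\to[a]$ estimates the near-covered fraction relative to the palette $[a]$, i.e.\ roughly $\sum_i S_2(k,i)a^{i-k}$, whereas the quantity the induction actually requires is $\sum_i S_2(k,i)\beta^{i-k+1}$, which is larger when $\beta\ll a$ and in any case concerns the structured family of proper conflict-free colorings rather than uniform ones. The paper handles this regime with hard analytic estimates on the $2$-associated Stirling sums (Lemmas~\ref{upper-lem_simple} and~\ref{opt-lem}, via AM--GM and a two-variable optimization after a change of variables), and it is there --- not in a concentration argument --- that the threshold $R\ge e^{5\cdot 10^6}$ is actually consumed. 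Your crossover discussion correctly identifies where the difficulty lies, but the proposed tool does not bound the right quantity.
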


The condition $|f| \geq 3$ for every $f \in E(\Hy)$ in Theorem \ref{hyper_main}
is mild, since we may move edges of $\Hy$ with size 2 to edges of $G$.
However, this operation might increase $\Delta(G)$.  When considering proper
conflict-free coloring for a graph $G$, this condition $|f| \geq 3$ is
satisfied only when $G$ has minimum degree at least 3.  But vertices of degree
at most 2 can be handled by a simple argument, so Theorem \ref{main_graph}
can be deduced (when $R$ is sufficiently large) from 
Theorem~\ref{hyper_main} without moving edges of size 2 from $\Hy$ to $G$.  

The emphasis of Theorem~\ref{hyper_main} is on making the lower bound hypothesis on $\beta$ as weak as possible; that is, 
making the coefficient on $R$ as small as possible.
So this result is more effective when $\rank(\Hy)$ is large.  
When $\rank(\Hy)$ is small, we can obtain the following result (Theorem \ref{hyper_fixed_rank}) by slightly modifying the proof of Theorem~\ref{hyper_main} to drop logarithmic factors.
We will show that the number of colors mentioned in Theorem \ref{hyper_fixed_rank} is optimal up to a constant factor.

\begin{theorem} \label{hyper_fixed_rank}
Let $r,\epsilon,R$ be positive real numbers. 
Let $G$ be a graph and $\Hy$ be a hypergraph with $V(G)=V(\Hy)$ and $\rank(\Hy) \leq r$ such that $|f| \geq 3$ for every $f \in E(\Hy)$.  
If either 
	\begin{itemize}
		\item $R \geq (1+\frac{1}{\epsilon})r$ and $$a:=\lceil \Delta(G)+R+ (1+\epsilon) \cdot \max_{v \in V(G)}\{\deg_\Hy(v) \cdot r^{\lceil \frac{\mr_\Hy(v)}{2} \rceil}R^{1-\lceil \frac{\mr_\Hy(v)}{2} \rceil}\} \rceil,$$ or 
		\item $r \leq 4$ and $$a:=\lceil \Delta(G)+R+ \Delta(\Hy) \cdot (3R^{-1}+R^{-2})\rceil,$$
	\end{itemize}
then for every $a$-assignment $L$ of $G$, there are at least $R^{|V(G)|}$ proper conflict-free $L$-colorings of $(G,\Hy)$.
\end{theorem}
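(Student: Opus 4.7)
The plan is to mirror the Rosenfeld-style recursive counting argument that proves Theorem~\ref{hyper_main}, but replace the probabilistic threshold on color multiplicities in hyperedges (the source of the $(\log R)^{2\lceil \mr_\Hy(v)/2\rceil}$ factor there) by a direct combinatorial enumeration that becomes cheap when $\rank(\Hy)\le r$. Concretely, fix an arbitrary ordering $v_1,\dots,v_n$ of $V(G)$ and, for each $i$, let $f(i)$ count the $L$-colorings of $v_1,\dots,v_i$ that are proper on $G[\{v_1,\dots,v_i\}]$ and conflict-free on every hyperedge of $\Hy$ already fully contained in $\{v_1,\dots,v_i\}$. The aim is to prove $f(i)\ge R\cdot f(i-1)$ for every $i$, which yields $f(n)\ge R^{|V(G)|}$.

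To pass from step $i-1$ to step $i$, one sums the number of valid extensions $c\in L(v_i)$ across all valid partial colorings counted by $f(i-1)$. A color $c$ is forbidden when either (a) $c$ appears on an already-colored $G$-neighbor of $v_i$ (contributing at most $\Delta(G)\cdot f(i-1)$ in total), or (b) for some hyperedge $e$ of $\Hy$ whose last-ordered vertex is $v_i$, the induced coloring on $e$ has no color used exactly once, i.e.\ $e$ is partitioned by color into blocks of size $\ge 2$. Only the total contribution from (b), summed over all such $e$ and over all partial colorings, needs a delicate bound; exploiting $|e|\le r$ directly avoids the threshold split and its logarithmic loss.

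For a hyperedge $e$ of size $m\le r$ with $v_i$ last, every bad coloring of $e$ corresponds to a set partition of $e$ into blocks of size $\ge 2$; such a partition uses at most $\lceil m/2\rceil$ distinct colors and there are at most $O(r^{\lceil m/2\rceil})$ partition shapes. For each shape the inductive hypothesis, together with an analogous upper bound on the number of valid partial colorings that agree with a prescribed color on $\lceil m/2\rceil-1$ of the earlier vertices of $e$, gives a contribution of order $R^{1-\lceil m/2\rceil}$ after dividing out the factor of $f(i-1)$. The small correction factors $\prod_{j<r}(1-j/R)^{-1}$ arising from the proper-coloring constraint at those extra vertices are absorbed into the $(1+\epsilon)$ slack, which is legitimate precisely when $R\ge(1+\tfrac{1}{\epsilon})r$. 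Summing over $\deg_\Hy(v_i)$ hyperedges then yields the list-size bound of the first bullet.

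For the second bullet ($r\le 4$) every hyperedge has size $3$ or $4$, and the bad configurations can be enumerated by hand: on a size-$3$ edge the sole bad pattern is monochromatic, contributing $R^{-2}$ after averaging; on a size-$4$ edge the bad patterns are monochromatic (contributing $R^{-2}$) and ``two disjoint equal pairs'' (three pairings of four vertices, each contributing $R^{-1}$), for a total of $3R^{-1}+R^{-2}$. Bounding $\deg_\Hy(v_i)$ by $\Delta(\Hy)$ then gives the claimed list size. The main obstacle, inherited from Theorem~\ref{hyper_main}, is arranging the inductive framework so that partial (not-yet-closed) hyperedges are charged to the correct step without double counting and so that the averaging step legitimately converts per-coloring worst cases into averaged contributions; once the bookkeeping from the proof of Theorem~\ref{hyper_main} is reused verbatim, the refined combinatorial estimates above plug in directly.
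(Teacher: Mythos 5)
Your proposal follows essentially the same route as the paper: the paper's Lemma~\ref{rosenfeld} is exactly the Rosenfeld recursion you describe, and the per-edge bounds you sketch are the paper's Lemma~\ref{S_2_simple_bound} (which gives $(1+\epsilon)\,r^{\lceil m/2\rceil}R^{1-\lceil m/2\rceil}$ per edge, with the $(1+\epsilon)$ coming from summing the geometric series over the number of blocks via $1/(1-r/R)\le 1+\epsilon$) together with the exact enumeration $S_2(3,1)R^{-1}$ and $S_2(4,1)R^{-2}+S_2(4,2)R^{-1}=R^{-2}+3R^{-1}$ for $r\le 4$. One small slip: in this accounting a size-$3$ edge contributes $R^{-1}$, not $R^{-2}$ (the exponent is $i-|e|+1=-1$), but since this is still dominated by $3R^{-1}+R^{-2}$ nothing breaks.
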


Theorem \ref{hyper_fixed_rank} has applications to other coloring parameters studied in the literature. 
Given a graph $G$, if we define $\Hy$ to be the hypergraph with $V(\Hy)=V(G)$ such that $E(\Hy)$ consists of the vertex sets of any 4-vertex path (not necessarily induced) in $G$ and the 3-element subsets of $N(v)$ for each vertex $v$ of $G$, then it is easy to show that $\deg_\Hy(v) \leq 2.5\Delta(G)^3$, $\rank(\Hy) \leq 4$ and $\mr_\Hy(v) \geq 3$ for every $v \in V(G)$. 
So taking $R=\sqrt{7.5}\Delta(G)^{3/2}$ in Statement 2 in Theorem \ref{hyper_fixed_rank} immediately gives the following corollary.

\begin{cor} \label{cor_strong_star_linear}
For every graph $G$ and every $\lceil\sqrt{30}\Delta(G)^{3/2}+\Delta(G)+\frac{1}{3}\rceil$-assignment $L$ of $G$, there exist at least $(7.5\Delta(G)^3)^{|V(G)|/2}$ proper $L$-colorings $\vph$ such that every 4-vertex path in $G$ has a color used exactly once, and for every vertex $v$ with $\deg_G(v) \geq 3$ and for any three neighbors of $v$, some color is used exactly once on those three neighbors.
In particular, every component of the subgraph of $G$ induced by any two arbitrarily chosen color classes of $\vph$ is a path on at most three vertices.
\end{cor}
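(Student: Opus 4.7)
The plan is to apply Statement~2 of Theorem~\ref{hyper_fixed_rank} to the pair $(G,\Hy)$ described just before the corollary, with $R:=\sqrt{7.5}\,\Delta(G)^{3/2}$.

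First I would verify the three structural bounds on $\Hy$. Every edge of $\Hy$ is either the vertex set of a (not necessarily induced) 4-vertex path of $G$, or a 3-element subset of some neighborhood, so $\rank(\Hy)\le 4$ and $\mr_\Hy(v)\ge 3$ for every $v$. The bound $\deg_\Hy(v)\le 2.5\,\Delta(G)^3$ is a straightforward tally: the number of edges of the first type containing $v$ is at most $2\Delta(G)^3$ (choose which of the four positions $v$ occupies, extend to at most $\Delta(G)^3$ ordered completions, and divide by $2$ for the reversal that yields the same vertex set), and the number of 3-subsets of some $N(w)$ containing $v$ is at most $\Delta(G)\cdot\binom{\Delta(G)-1}{2}\le 0.5\,\Delta(G)^3$.

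Next I would substitute into the formula for $a$ in Statement~2. With this choice of $R$,
\[
\Delta(\Hy)\cdot 3R^{-1}\le \frac{3\cdot 2.5\,\Delta(G)^3}{\sqrt{7.5}\,\Delta(G)^{3/2}}=\sqrt{7.5}\,\Delta(G)^{3/2},
\qquad
\Delta(\Hy)\cdot R^{-2}\le \frac{2.5\,\Delta(G)^3}{7.5\,\Delta(G)^3}=\tfrac{1}{3},
\]
so $R+\Delta(\Hy)\bigl(3R^{-1}+R^{-2}\bigr)\le 2\sqrt{7.5}\,\Delta(G)^{3/2}+\tfrac{1}{3}=\sqrt{30}\,\Delta(G)^{3/2}+\tfrac{1}{3}$, which matches the list size in the corollary. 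Theorem~\ref{hyper_fixed_rank} then yields at least $R^{|V(G)|}\ge \sqrt{7.5}\,\Delta(G)^{3/2}$ proper conflict-free $L$-colorings of $(G,\Hy)$.

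Finally I would translate the conflict-free property on $\Hy$ into the conclusions of the corollary. By construction, every 4-vertex path of $G$ and every 3-element subset of $N(v)$ with $\deg_G(v)\ge 3$ is an edge of $\Hy$, which directly gives the two ``used exactly once'' statements. For the bi-chromatic-component conclusion: if some vertex $v$ had three neighbors lying in the same bi-chromatic component as $v$, those three neighbors would all carry the single color opposite to $\vph(v)$, making some 3-subset of $N(v)$ monochromatic and violating the conflict-free condition; and any cycle or any path on $\ge 4$ vertices inside a bi-chromatic subgraph contains a 4-vertex path on which only two colors appear (each exactly twice), again violating the conflict-free condition. Hence every bi-chromatic component has maximum degree at most $2$ and contains no $P_4$, so it is a path on at most three vertices. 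The main obstacle, as I see it, is simply the $2.5\,\Delta(G)^3$ degree bookkeeping together with the arithmetic check that the residual error collapses to exactly $1/3$; everything else is a direct invocation of Theorem~\ref{hyper_fixed_rank} and a short combinatorial translation.
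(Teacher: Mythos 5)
Your proposal is correct and follows exactly the route the paper takes (the paper states it in one sentence: define $\Hy$ from the 4-vertex paths and the 3-subsets of neighborhoods, verify $\deg_\Hy(v)\le 2.5\Delta(G)^3$, $\rank(\Hy)\le 4$, $\mr_\Hy(v)\ge 3$, and apply Statement~2 of Theorem~\ref{hyper_fixed_rank} with $R=\sqrt{7.5}\Delta(G)^{3/2}$). Your counting of $\deg_\Hy(v)$, the arithmetic collapsing the error term to $\sqrt{7.5}\Delta(G)^{3/2}+\tfrac13$, and the translation to the bi-chromatic-component statement are all sound and just fill in the details the paper labels ``easy to show.''
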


The coloring satisfying the conclusion of Corollary~\ref{cor_strong_star_linear} is both a \EmphE{star coloring}{0mm}, which is a proper coloring with no bi-colored 4-vertex path, and also a
\EmphE{linear coloring}{0mm}, which is a proper coloring such that any two color classes induce a subgraph whose every component is a path.
Yuster \cite{Y} proved that there exist infinitely many graphs $G$ having no linear coloring with at most $\Delta(G)^{3/2}/11$ colors.
So Corollary~\ref{cor_strong_star_linear}, and hence Theorem~\ref{hyper_fixed_rank}, are optimal up to a constant factor.
Corollary~\ref{cor_strong_star_linear} also improves the currently best known upper bound $\max\{50\Delta(G)^{4/3}, 10\Delta(G)^{3/2}\}$ for linear coloring \cite{Y}.
For star coloring, our $(\sqrt{30}+o(1))\Delta(G)^{3/2}$ upper bound is not better than the currently best known upper bounds $2\sqrt{2}\Delta(G)^{3/2}+\Delta(G)$ \cite{EP,WW}, even though both results are only a $O(\sqrt{\log\Delta(G)})$ factor away from the known lower bound $\Omega(\Delta(G)^{3/2}/\sqrt{\log\Delta(G)})$ \cite{FRR}.
However, the coloring we obtain in Corollary~\ref{cor_strong_star_linear} is stronger than a star coloring and than a linear coloring.  
Every bi-chromatic component of the coloring in Corollary~\ref{cor_strong_star_linear} has at most three vertices, while the bi-chromatic components of a star coloring or a linear coloring can have arbitrarily many vertices.

Our next result shows that the fractional version of Conjecture
\ref{max_deg_conj} holds asymptotically.  It follows from a more general
setting for coloring pairs $(G,\Hy)$.

Let $[k]$\aside{$[k]$} denote $\{1,\ldots,k\}$, for each $k\in\Z^+$.
Let $a$ and $b$ be positive integers.
An \EmphE{$(a:b)$-coloring}{4mm} of a graph or a hypergraph assigns to each
vertex a $b$-element subset of $[a]$.  An $(a:b)$-coloring $\vph$ of a graph
$G$ is \EmphE{proper}{0mm} if for every $j \in [a]$, the preimage
$\varphi^{-1}(j)$ is a stable set in $G$.  An $(a:b)$-coloring $\vph$ of a
hypergraph $\Hy$ is \EmphE{conflict-free}{0mm} if for every edge $e$ of $\Hy$,
there exist at least $b$ elements $\ell$ of $[a]$ such that $|e \cap
\vph^{-1}(\ell)|=1$.  Let $G$ be a graph and $\Hy$ be a hypergraph with
$V(\Hy)=V(G)$.  An $(a:b)$-coloring $\vph$ of $(G,\Hy)$ is \EmphE{fractionally
proper conflict-free}{-5mm} if it is a proper $(a:b)$-coloring of $G$ and a
conflict-free $(a:b)$-coloring of $\Hy$.  For a positive real number $t$,
$(G,\Hy)$ is \EmphE{fractionally properly conflict-free $t$-colorable}{-1mm} if
there exists a proper conflict-free $(x:y)$-coloring of $(G,\Hy)$ for some
positive integers $x,y$ with $x/y \leq t$.  

Fractional proper conflict-free coloring for $(G,\Hy)$, defined above, is a
natural linear programming relaxation of proper conflict-free coloring for
$(G,\Hy)$.  We prove that $\Delta(\Hy)$ is no longer required for upper
bounding the number of colors, if we consider fractional coloring and
$\rank(\Hy) \leq \Delta(G)$.

\begin{theorem} \label{fractional_intro}
For every $\epsilon>0$, there exists $d_0$ such that if $\Delta \geq d_0$ and
$G$ is a graph with maximum degree at most $\Delta$, then $(G,\Hy)$ is
fractionally properly conflict-free $(1+\epsilon)\Delta$-colorable for every
hypergraph $\Hy$ with $V(\Hy)=V(G)$ and $\rank(\Hy) \leq \Delta$.
\end{theorem}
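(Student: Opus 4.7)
My plan is to reformulate the theorem as a distribution-on-independent-sets construction. Observe first that $(G,\Hy)$ has a fractional proper conflict-free $t$-coloring if and only if there is a probability distribution $\mu$ on independent sets of $G$ with $\Pr_\mu[v\in I]\ge 1/t$ for every $v\in V(G)$ and $\Pr_\mu[|I\cap e|=1]\ge 1/t$ for every $e\in E(\Hy)$: one direction normalizes the uniform distribution on the color classes of an $(x{:}y)$-coloring, the other converts a rational-valued $\mu$ into an $(x{:}y)$-coloring via a common denominator. Setting $t:=\lceil(1+\epsilon)\Delta\rceil\ge\Delta+1$ for $\Delta$ large, it suffices to construct such $\mu$.

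As the principal component of $\mu$, I take $\mu_0$: sample a uniformly random proper $t$-coloring $\chi$ of $G$ (well defined since $t\ge\Delta+1$), then sample $c\in[t]$ uniformly and set $I:=\chi^{-1}(c)$. By symmetry under relabelings of the color set, $\Pr_{\mu_0}[v\in I]=1/t$ exactly, so the vertex constraint is met with equality. The edge constraint for $\mu_0$ reduces to $\mathbb{E}_\chi[S_e(\chi)]\ge 1$ for every $e$, where $S_e(\chi):=|\{c\in[t]:|e\cap\chi^{-1}(c)|=1\}|$. A standard swap argument, recoloring $v$ with any of the at least $t-\Delta-1$ colors avoiding $\chi(N_G(v))\cup\{\chi(u)\}$, yields $\Pr_\chi[\chi(u)=\chi(v)]\le 1/(t-\Delta)\le 1/(\epsilon\Delta)$ for non-adjacent $u,v$. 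A union bound over non-neighbors in $e$ then gives $\mathbb{E}_\chi[S_e(\chi)]\ge|e|\bigl(1-(|e|-1)/(\epsilon\Delta)\bigr)\ge 1$ whenever $|e|\le\epsilon\Delta/2+1$.

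The main obstacle is handling edges of intermediate-to-large size $|e|\in(\epsilon\Delta/2,\Delta]$, where the union bound above is too crude and $\mathbb{E}_\chi[S_e(\chi)]$ can in principle fall below $1$. I would address these large edges in one of two ways: by switching to a distribution on proper $t$-colorings with better-controlled spread on large subsets, for example a random equitable coloring analyzed via the inequality $S_e\ge 2Z_e-|e|$ with $Z_e$ the number of distinct colors on $e$ and a lower bound $\mathbb{E}[Z_e]\ge (|e|+1)/2$ coming from coupling with an iid uniform coloring; or by augmenting $\mu_0$ with a small correction $\mu_1$ tailored to the bad edges, where $\mu_1$ samples an independent set by choosing a random vertex $v$ of a random bad edge $e$ and then growing greedily while avoiding $e\setminus\{v\}$. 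Either route requires care in maintaining the vertex constraint, and this is where the slack $t-\Delta=\Theta(\epsilon\Delta)$ in the hypothesis is used, together with the condition $\rank(\Hy)\le\Delta$, which controls both the edge sizes and the number of bad edges simultaneously needing correction. I expect the detailed argument to mirror the framework of the general Fractional Theorem alluded to in the abstract.
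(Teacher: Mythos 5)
Your reformulation as a distribution $\mu$ on stable sets, the base distribution $\mu_0$ (random color class of a uniformly random proper $t$-coloring), the swap bound $\Pr[\chi(u)=\chi(v)]\le 1/(t-\Delta)$, and the resulting treatment of edges with $|e|=O(\epsilon\Delta)$ are all sound. But the case $|e|\in(\epsilon\Delta/2,\Delta]$, which you correctly identify as the main obstacle, is where the entire difficulty of the theorem lives, and neither of your proposed fixes works as described. For route (a), the inequality $S_e\ge 2Z_e-|e|$ is fine, but the claim $\mathbb{E}[Z_e]\ge(|e|+1)/2$ for a (uniformly random or equitable) proper coloring is unsupported: the only pairwise collision bound you have is $1/(t-\Delta)\approx 1/(\epsilon\Delta)$, and $|e|-\binom{|e|}{2}/(\epsilon\Delta)$ is negative for $|e|$ near $\Delta$ and small $\epsilon$; no standard coupling between a random proper coloring of a bounded-degree graph and an i.i.d.\ coloring yields the needed negative correlation on arbitrary (non-adjacent) vertex pairs. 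For route (b), note that $\mu_0$ satisfies the vertex constraint \emph{with equality}, so mixing in any correction $\mu_1$ immediately violates it unless $\mu_1$ itself has vertex marginals $\ge 1/t$; and more fatally, $\Hy$ may have exponentially many edges of size $\Delta$ (the corollary in the paper takes $E(\Hy)$ to be \emph{all} subsets of size at most $\Delta$), so a correction that picks ``a random bad edge'' gives each bad edge only an exponentially small boost. Any repair must handle all large edges simultaneously with a single construction.

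The missing idea in the paper's proof is twofold. First, it passes to the LP dual (Lemma~\ref{dual}): one only needs, for each nonnegative weighting $(f,g)$ of vertices and hyperedges summing to $1$, a single stable set of total weight $\ge 1/((1+\epsilon)\Delta)$, which avoids having to build one distribution meeting all marginal constraints at once. Second, and crucially, it sparsifies: sample each vertex independently with probability $p=\log\Delta/\Delta$ and delete vertices with more than $(1+\epsilon)p\Delta$ sampled neighbors. The point of this choice of $p$ is that $\Pr[|z\cap B|=1]\approx |z|\,p(1-p)^{|z|}\ge(1-\epsilon)^2p$ \emph{uniformly over all} $1\le|z|\le\Delta$ (since $\Delta(1-p)^{\Delta}\to 1$), while $\Pr[v\in B]\ge(1-\epsilon)p$; the surviving graph has maximum degree $O(\log\Delta)$, so properly coloring it with $\lfloor(1+\epsilon)p\Delta\rfloor+1$ colors and taking the best class by pigeonhole loses only a factor $(1+\epsilon)p\Delta$, and $p/((1+\epsilon)p\Delta)=1/((1+\epsilon)\Delta)$. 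Your proposal contains neither the dualization nor the sparsification, and these are exactly what make edges of size up to $\Delta$ tractable, so the gap is essential rather than a matter of detail.
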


Theorem \ref{fractional_intro} is asymptotically optimal since, for each $\Delta\ge 2$, there are
infinitely many connected graphs with maximum degree $\Delta$ that are not properly $(\Delta-1)$-colorable.

For a graph $G$, we say that an $(a:b)$-coloring of $G$ is a \EmphE{fractional
proper conflict-free coloring of $G$}{0mm} if it is a proper $(a:b)$-coloring
of $G$ such that for every non-isolated vertex $v$ of $G$, at least $b$ colors
appear exactly once on $N(v)$.  If we take $\Hy$ so that $E(\Hy)$ contains all
nonempty subsets of $V(G)$ with size at most $\Delta$, then
Theorem~\ref{fractional_intro} leads to the following corollary.

\begin{cor} \label{fractional_cor}
For every $\epsilon>0$, there exists $d_0$ such that if $\Delta \geq d_0$ and
$G$ is a graph with maximum degree at most $\Delta$, then there exists a proper
$(a:b)$-coloring $\vph$ of $G$ for some positive integers $a$ and $b$ with $a
\leq (1+\epsilon)\Delta b$ such that
\begin{enumerate}
	\item $\vph$ is a fractional proper conflict-free $(a:b)$-coloring of $G$,
	\item for any set $C$ of colors with size less than $5b/2$, every component of the subgraph of $G$ induced by the vertices that use only colors in $C$ has at most two vertices, and
	\item $|\vph(v) \cap \vph(w)| \leq b/2$ for any distinct vertices $v,w$ of $G$.
\end{enumerate}
\end{cor}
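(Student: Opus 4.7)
The plan is to apply Theorem~\ref{fractional_intro} to the pair $(G,\Hy)$, where $\Hy$ is chosen so that each of the three conclusions can be extracted from the conflict-free condition on a specific family of edges. Following the hint preceding the corollary, I would take $\Hy$ to be the hypergraph on $V(G)$ whose edges are all non-empty subsets of $V(G)$ of cardinality at most $\Delta$. Then $\rank(\Hy)\leq\Delta$, and once $\Delta$ exceeds the $d_0$ supplied by Theorem~\ref{fractional_intro} for the given $\epsilon$, the theorem produces a fractionally proper conflict-free $(a{:}b)$-coloring $\vph$ of $(G,\Hy)$ with $a\leq(1+\epsilon)\Delta b$.

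With this $\Hy$ in hand, item~(1) is immediate: for every non-isolated vertex $v$, the neighborhood $N(v)$ has size at most $\Delta$ and is therefore an edge of $\Hy$, so the conflict-free condition at this edge provides $b$ colors appearing exactly once on $N(v)$. For item~(3), I would apply the conflict-free condition to the pair $\{u,w\}$, which is also an edge of $\Hy$. A color $\ell$ appears exactly once on $\{u,w\}$ iff $\ell\in\vph(u)\triangle\vph(w)$, so the condition forces $|\vph(u)\triangle\vph(w)|\geq b$; since $|\vph(u)|=|\vph(w)|=b$, this is equivalent to $|\vph(u)\cap\vph(w)|\leq b/2$.

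The only step that is not purely formal is item~(2), whose key idea is that it already follows from item~(3) together with the properness of $\vph$. I would prove it by contradiction: if some component of $G[\{v:\vph(v)\subseteq C\}]$ had at least three vertices, then it would contain a 3-vertex path $u,v,w$ (with $uv,vw\in E(G)$). Properness kills every intersection involving the middle vertex, and a 3-term inclusion--exclusion collapses to
\[
|\vph(u)\cup\vph(v)\cup\vph(w)| \;=\; 3b-|\vph(u)\cap\vph(w)| \;\geq\; 3b-\tfrac{b}{2} \;=\; \tfrac{5b}{2},
\]
where the inequality uses item~(3). Since this union is contained in $C$, we conclude $|C|\geq 5b/2$, contradicting the hypothesis on $C$.

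The main conceptual point, and what I would call the ``obstacle,'' is recognizing the right hypergraph $\Hy$ so that single-edge conflict-freeness simultaneously encodes all three conclusions, together with the observation that properness of the coloring forces the middle-vertex terms in the inclusion--exclusion for a 3-vertex path to vanish; once that is noticed, item~(2) becomes a direct consequence of item~(3) and there is no further difficulty.
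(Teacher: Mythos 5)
Your proposal is correct and follows essentially the same route as the paper: the same hypergraph $\Hy$ (all nonempty subsets of size at most $\Delta$), the same derivation of items (1) and (3) from conflict-freeness on $N(v)$ and on pairs, and the same contradiction for item (2) using properness at the middle vertex of a $3$-vertex connected subgraph together with item (3). The only cosmetic difference is that you phrase the final count as a three-term inclusion--exclusion while the paper first bounds $|\vph(y)\cup\vph(z)|\ge 3b/2$ and then adds the disjoint set $\vph(x)$; these are the same computation.
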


Statement 1 of Corollary \ref{fractional_cor} implies that the
fractional version of Conjecture \ref{max_deg_conj} holds asymptotically.
Statement 2 of Corollary \ref{fractional_cor} gives the asymptotically optimal
upper bound for the fractional version of many types of coloring that address
properties of bi-chromatic components, such as acyclic coloring, star coloring,
frugal coloring, and linear coloring.  It is also a fractional analogue of
Corollary~\ref{cor_strong_star_linear}, but now each $2b$-colored component has
at most two vertices, which is clearly optimal.

The paper is organized as follows.
To emphasize the key ideas in our proofs of Theorems~\ref{main_graph}, \ref{hyper_main} and \ref{hyper_fixed_rank}, for clarity we first present the proofs assuming certain estimates of quantities involving 2-associated Stirling numbers, which count the number of partitions of a set with certain properties.  
In Section~\ref{sec2} (and the appendix) we prove those estimates via a sequence of lemmas.  
Finally, we prove Theorem~\ref{fractional_intro} and Corollary~\ref{fractional_cor} in Section~\ref{frac-sec}.

\section{Proof of Theorems~\ref{main_graph}, \ref{hyper_main}, and \ref{hyper_fixed_rank}}

In this section, we prove Theorems~\ref{main_graph}, \ref{hyper_main}, and \ref{hyper_fixed_rank}.
The proofs use a clever inductive counting argument introduced by
Rosenfeld~\cite{rosenfeld1} and extended by Wanless and Wood~\cite{WW}.
 This technique works well for many problems amenable to the Lov\'{a}sz Local
Lemma or entropy compression, but often gives~simpler proofs.
Our proof actually works for a slightly more general setting.
For an integer $t$, a graph $G$, and a hypergraph $\Hy$ with $V(\Hy)=V(G)$, we say that $\vph$ is a \EmphE{proper $t$-conflict-free coloring of $(G,\Hy)$}{-9mm} if it is a proper coloring of $G$ such that for every $f \in E(\Hy)$,\footnote{For clarity, we always use $e$ to denote the base of the natural logarithm, which is the constant $2.71828\cdots$. For an edge of a hypergraph, we typically use $f$.} there exists a color that is used $k$ times by $\vph$ on $f$ for some $k \in [t]$.
Note that conflict-free colorings of $(G,\Hy)$ are exactly $1$-conflict-free colorings of $(G,\Hy)$.

When applying the aforementioned inductive counting argument to proper
$t$-conflict-free coloring, the computation involves $t$-associated Stirling
numbers of the second kind, denoted by $S_t(d,i)$; for positive integers $t$,
$i$, and $d$, the quantity \Emph{$S_t(d,i)$} is defined as the number of
partitions of the set $[d]$ into $i$ parts, each of size at least $t$.
Now we can state and prove our first key lemma.

\begin{lem} \label{rosenfeld}
Let $G$ be a graph and $\Hy$ be a hypergraph with $V(G)=V(\Hy)$. 
Let $t$ be a positive integer.  Let $\beta$ be a real number. 
If $a$ is a real number such that 
	\begin{align}
		a \geq \Delta(G)+\beta+\sum_{f \in E(\Hy), f \ni v}\sum_{i=1}^{\lfloor |f|/(t+1) \rfloor} S_{t+1}(|f|,i) \cdot \beta^{i-|f|+1}
	\label{main-hyp}
	\end{align}
for every $v \in V(G)$, then for every $a$-assignment $L$ of $G$, there are at least $\beta^{|V(G)|}$ proper $t$-conflict-free $L$-colorings of $(G,\Hy)$.
\end{lem}

\begin{proof}
For a subset $Z$ of $V(G)$, an $L$-coloring $\vph$ of $G[Z]$ is a \emph{proper
$t$-conflict-free partial coloring on $Z$} if 
	\begin{itemize}
		\item[(a)] $\vph$ is a proper coloring of $G[Z]$, and 
		\item[(b)] for each $f\in E(\Hy)$, if $f\subseteq Z$, then
$\vph$ uses some color exactly $k$ times on $f$ for some $k \in [t]$.
	\end{itemize}
For each subset $Z$ of $V(G)$, denote by \Emph{$R(Z)$} the number of proper
$t$-conflict-free partial $L$-colorings on $Z$.  For every nonempty subset $Z$
of $V(G)$, and every $v\in Z$, we will prove by induction on $|Z|$:
\begin{align}
	R(Z)&\ge \beta \cdot R(Z \setminus \{v\}). \label{key-ineq}
\end{align}
For each $v\in V(G)$, we have $R(\{v\})=a\ge \beta$.  
So the desired base case holds. 
Thus, induction on $|Z|$ will give $R(Z)\ge \beta^{|Z|}$ for all $Z$.  
In particular, $R(V(G))\ge \beta^{|V(G)|}$.

Fix some $Z \subseteq V(G)$ and $v \in Z$.
If we extend a proper $t$-conflict-free partial coloring on $Z \setminus \{v\}$
by further coloring $v$ with a color that is not used on its colored neighbors,
then we form a proper $L$-coloring of $G[Z]$.  Note that there are at least
$a-\Delta(G)$ ways to extend a proper $t$-conflict-free partial coloring on
$Z \setminus \{v\}$ to a proper coloring of $G[Z]$.  Hence there are at least
$(a-\Delta(G)) \cdot R(Z\setminus\{v\})$ $L$-colorings of $G[Z]$ satisfying (a).
We show that at least $\beta\cdot R(Z\setminus\{v\})$ of these also satisfy (b).

A proper $L$-coloring $\vph$ of $G[Z]$ is \Emph{bad} if it is not a proper
$t$-conflict-free partial coloring on $Z$ but its restriction to $Z \setminus
\{v\}$ is a proper $t$-conflict-free partial coloring on $Z \setminus\{v\}$.
Let \Emph{\mbox{$B$}} be the set of bad $L$-colorings.  
For every $f \in E(H)$ with $v \in f$ and $f \subseteq Z$, let
\Emph{$B_f$}:= $\{\vph \in B: \vph$ does not use any color exactly $k$
times on $f$ for every $k\in[t]\}$; these are the colorings that are bad for $f$.
So $B = \bigcup_{f \in E(\Hy), f \ni v}B_f$.
To prove~\eqref{key-ineq}, it suffices to show $(a-\Delta(G)) \cdot
R(Z\setminus\{v\})-|B| \geq \beta \cdot R(Z \setminus \{v\})$.

We claim that for every edge $f$ of $\Hy$ containing $v$,
	\begin{align}
	    |B_f| \leq \sum_{i=1}^{\lfloor |f|/(t+1) \rfloor} S_{t+1}(|f|,i) \cdot R(Z\setminus\{v\})\beta^{i-|f|+1}. \label{Bf-ineq}
	\end{align}
This claim implies this lemma since the number of proper $t$-conflict-free partial colorings of $G[Z]$ is at least
	\begin{align*}
		& R(Z\setminus\{v\})(a-\Delta(G)) - \sum_{f \in E(\Hy), f \ni v} |B_f| \\
		\ge~& R(Z\setminus\{v\})\left[a-\Delta(G) - \sum_{f \in E(\Hy), f \ni v}\sum_{i=1}^{\lfloor |f|/(t+1) \rfloor} S_{t+1}(|f|,i) \cdot \beta^{i-|f|+1}\right]\\
		\ge~& \beta R(Z\setminus \{v\}).
	\end{align*}
Here the final inequality follows from~\eqref{main-hyp}.

Now we prove \eqref{Bf-ineq}.
Fix an edge $f$ of $\Hy$ containing $v$.
For every $\vph \in B_f$, let \Emph{\mbox{$\PP_\vph$}} be the partition $\{\vph^{-1}(j) \cap f: j \in {\mathbb Z}^+\}$ of $f$. 
These are the color classes of $\vph|_f$, which is the coloring obtained by restricting $\vph$ to $f$.
Since $\vph \in B_f$, every part in $\PP_\vph$ has size at least $t+1$.
Hence the number of possibilities for $\PP_{\vph}$ is at most $S_{t+1}(|f|,i_\vph)$, where $i_\vph$ is the number of colors used in $\vph|_f$.
Note that for every partition $\PP$ of $f$ into $i$ parts each having size at least $t+1 \geq 2$, there exists a subset \Emph{$T_\PP$} of $f$ consisting of a vertex in each part of $\PP$ such that $v \not \in T_\PP$.
We define \Emph{$T_\vph$} to be $T_{\PP_\vph} \cup (Z-f)$.
Note that $\vph$ is uniquely determined by $\PP_\vph$ and $\vph|_{T_\vph}$.
Moreover, since $T_\vph \subseteq Z \setminus \{v\}$, the partial coloring $\vph|_{T_\vph}$ is a proper $t$-conflict-free partial $L$-coloring on $T_\vph = Z \setminus (f \setminus T_{\PP_\vph})$.
Hence, for every partition $\PP$ of $f$ into parts each having size at least $t+1$, the number of possibilities for $\vph|_{T_\vph}$, among all colorings $\vph$ in $B_f$ with $\PP_\vph=\PP$, is at most $R(Z \setminus (f \setminus T_{\PP}))$.
By applying the inductive hypothesis $|f|-|\PP|-1$ times, we see that 
	\begin{align*}
		R(Z\setminus\{v\}) \ge R(Z\setminus (f \setminus T_\PP))\beta^{|f|-|\PP|-1}.
	\end{align*}
Equivalently, $R(Z \setminus (f \setminus T_\PP)) \leq R(Z\setminus\{v\})\beta^{|\PP|-|f|+1}.$
Therefore, for any fixed integer $i$, the number of colorings $\vph$ in $B_f$ with $|\PP_\vph|=i$ is at most $S_{t+1}(|f|,i) \cdot R(Z\setminus\{v\})\beta^{i-|f|+1}$.
Since every color is used at least $t+1$ times or zero times on $f$, we have $1 \leq i \leq \lfloor |f|/(t+1) \rfloor$.

This proves the lemma.
\end{proof}

As we have seen in Lemma \ref{rosenfeld}, estimates for $t$-associated Stirling
numbers of the second kind are crucial.  In this paper, we will only need
sophisticated estimates for the case $t=2$.  These $S_2(d,i)$ have been widely
studied in the literature (see~\cite{Bona-Mezo,OEIS} and the references
therein), but the known formulas are unhelpful for our purposes here.  

Instead, we will prove the following two estimates for $S_2(d,i)$.
We prove the second in Section~\ref{sec2}. But the proof of the first essentially 
consists of straightforward but tedious calculation, so we defer it to the
appendix.

\begin{lem} \label{lem:clm1_simple}
Let $d$ and $R$ be positive integers and $\beta$ be a real number with $\beta \geq 0.6R \geq 14$.
If $3\le {d}\le \beta^{19/20}$ and $\beta \geq 600$ and $R \geq 750$, then 
$$
\sum_{i=1}^{\left\lfloor {d}/2\right\rfloor}S_2(d,i)\beta^{i-d+1}\le  R^{-1/2}.
$$
\end{lem}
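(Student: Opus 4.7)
To prove Lemma~\ref{lem:clm1_simple}, I plan to reduce the sum to its dominant term and then exploit the smallness of $d/\beta$ imposed by the hypothesis $d\leq\beta^{19/20}$. First, I would establish the combinatorial upper bound
\[
S_2(d,i) \;\leq\; \frac{d!}{2^i\, i!\, (d-2i)!}\cdot i^{d-2i}
\]
via a designated-pair overcount: choose $2i$ anchor elements from $[d]$, pair them into $i$ unordered pairs (giving $(2i-1)!! = d!/(2^i i! (d-2i)!) \cdot \binom{d}{2i}^{-1}$), then distribute the remaining $d-2i$ elements freely among the $i$ blocks. This bound is tight at $i=\lfloor d/2\rfloor$. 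Equivalently, it follows from $S_2(d,i)=\tfrac{d!}{i!}[x^d](e^x - 1 - x)^i$ together with the inequality $e^x - 1 - x \leq \tfrac{x^2}{2}e^x$.

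Next, set $T_i := S_2(d,i)\,\beta^{i-d+1}$. Using the bound above and $d\leq\beta^{19/20}$, I would argue that the sequence $\{T_i\}$ is dominated by its terminal term $T_{\lfloor d/2\rfloor}$, so that $\sum_i T_i$ is bounded up to a small constant factor by this one term. For even $d=2m$, the terminal term equals $(2m-1)!!\,\beta^{1-m}$, and Stirling's estimate yields
\[
T_m \;\leq\; \sqrt{2\pi d}\,\beta\,\left(\frac{d}{e\beta}\right)^{d/2}.
\]
The hypothesis $d\leq\beta^{19/20}$ then forces $d/(e\beta) \leq \beta^{-1/20}/e$, so $T_m \leq \sqrt{2\pi d}\,\beta\, e^{-d/2}\beta^{-d/40}$, which is rapidly decaying in~$d$. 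The odd-$d$ case is analogous after an extra tripleton is accounted for. Alternatively, saddle-point analysis of the generating function $\sum_i S_2(d,i)\beta^i = d![x^d]e^{\beta(e^x - 1 - x)}$ gives the same leading-order bound in one step.

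Finally, I would verify $\sum_i T_i \leq R^{-1/2}$ (equivalently, using $R\leq \beta/0.6$, that the sum is at most $\sqrt{3/(5\beta)}$) by splitting on $d$: for small $d$ (say $3\leq d\leq 30$), the sum is easily bounded using exact values of $(d-1)!!$ and the fact that $\beta^{d/2-1}$ is already large enough when $\beta \geq 600$; for larger $d$, the factor $e^{-d/2}\beta^{-d/40}$ overwhelms any polynomial growth in $d$. The main obstacle I anticipate is that the bound on $S_2(d,i)$ is tight only at $i=\lfloor d/2\rfloor$ and loosens rapidly away from it, so the reduction of the sum to its terminal term must be performed with enough slack to be absorbed by the hypotheses $\beta\geq 600$ and $R\geq 750$; it is for exactly this reason that the small-$d$ case is handled by direct computation rather than the asymptotic bound. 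If the crude ratio argument proves too lossy, an alternative is to bound the full sum through the saddle-point estimate on the EGF, which bypasses the individual-term analysis entirely.
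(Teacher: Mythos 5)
There is a genuine gap at the heart of your argument: the claim that the sequence $T_i := S_2(d,i)\beta^{i-d+1}$ is dominated by its terminal term $T_{\lfloor d/2\rfloor}$, so that the sum is within a small constant factor of that one term. This is false throughout the upper portion of the allowed range of $d$. Take $d=2m$ and compare the last two terms of your own upper bound $U_i=\frac{d!}{2^i i!(d-2i)!}i^{d-2i}\beta^{i-d+1}$: one computes $U_m/U_{m-1}=\beta/\bigl(m(m-1)^2\bigr)$, which is less than $1$ as soon as $d\gtrsim 2\beta^{1/3}$. The same phenomenon holds for the true values: counting only the partitions with one block of size $4$ and the rest of size $2$ gives $S_2(2m,m-1)\ge (2m-1)!!\,\frac{m(m-1)}{2}$, hence $T_{m-1}\ge T_m\cdot\frac{m(m-1)}{2\beta}>T_m$ once $d\gtrsim 2\sqrt{2\beta}$. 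Since the lemma must cover $d$ up to $\beta^{19/20}$ (e.g.\ $\beta=600$, $d\approx 436$, versus $2\sqrt{2\beta}\approx 69$), the maximum of $T_i$ sits at an interior index and can exceed $T_{\lfloor d/2\rfloor}$ by an enormous factor (a rough computation at $\beta=600$, $d=436$ puts $T_{200}$ more than $10^{30}$ times larger than $T_{218}$). So your Stirling estimate $T_m\le\sqrt{2\pi d}\,\beta\,(d/(e\beta))^{d/2}$, while correct for the terminal term, does not control the sum, and the "ratio argument" you hope to fall back on cannot be repaired: the terms are genuinely not monotone increasing in $i$. Your proposed escape hatch (a saddle-point analysis of the exponential generating function) is a legitimate but entirely different and unexecuted proof.

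For comparison, the paper avoids identifying the maximal term altogether. It uses the cruder bound $S_2(d,i)\le\binom{d}{i}i^{d-i}2^{-i}$ (one designated leader per block rather than a designated pair), then applies $\binom{d}{i}\le 2^d$ and $2i\le d\le\beta$ to get $T_i\le\beta\,(d/\beta)^{d-i}$, so the whole sum is bounded by the geometric series $\beta\,(d/\beta)^{\lceil d/2\rceil}/(1-d/\beta)$. This loses a factor of roughly $e^{d/2}$ against the terminal term but is a valid bound on the \emph{sum}, and the hypothesis $d\le\beta^{19/20}$ makes $(d/\beta)^{\lceil d/2\rceil}\le\beta^{-d/40}$ small enough to absorb that loss. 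The paper then splits into $3\le d\le 9$ (direct computation, as you also propose), $10\le d\le\beta^{2/3}$, and $\beta^{2/3}\le d\le\beta^{19/20}$ to verify the numerics. If you replace your single-term reduction with this geometric-series bound (or carry out the saddle-point estimate in full), the rest of your outline goes through.
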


\begin{lem} \label{upper-lem_simple}
Let $d$ and $R$ be positive integers with $110 \leq d \leq R$. 
If $\epsilon$, $c$, and $\beta$ are real numbers such that $0.6251\le \epsilon<1$, $0.3\le c<\frac{\epsilon}{2}$ 
and $\epsilon R \leq \beta \leq R$, then 
$$\sum_{i=1}^{\lfloor d/2\rfloor} S_2(d,i)\beta^{i-d+1} \leq \frac{d\beta}2\left(\frac{2c}{\epsilon}\right)^{(1-c)d} + 2R^3 \left(\frac{\epsilon}{0.6251}\right)^{-d/2}.$$ 
Moreover, if $d\ge \max\{\frac{7.6\log R}{\log(\epsilon/0.6251)},\frac{2.5\log R}{(1-c)\log(\epsilon/(2c))}\}$, then $$\sum_{i=1}^{\lfloor d/2\rfloor} S_2(d,i)\beta^{i-d+1} \leq R^{-1/2}.$$
\end{lem}

Now we can prove a special case of Theorem \ref{main_graph} for graphs with
minimum degree at least three.

\begin{lem} \label{mindeg3}
Fix a positive integer $\Delta\ge 6.5 \cdot 10^7$, fix a real number
$\beta$ with $\Delta \geq \beta \ge 0.6550826\Delta$, and let
$a:=\left\lceil\Delta+\beta+\sqrt{\Delta}\right\rceil$.  If $G$ is a graph with
minimum degree at least 3 and maximum degree at most $\Delta$, and $L$ is an
$a$-assignment for $G$, then there are at least $\beta^{|V(G)|}$ proper
conflict-free $L$-colorings of $G$.  Analogous statements hold when $\Delta\ge
4000$ and $\beta\ge 0.\overline{6}\Delta$ and when $\Delta\ge 750$ and
$\beta\ge 0.8\Delta$.
\end{lem}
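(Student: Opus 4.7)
The plan is to apply Lemma~\ref{rosenfeld} with $t=1$ to the pair $(G,\Hy)$, where $\Hy$ is the hypergraph on $V(G)$ whose edges are the open neighborhoods $N(u)$ of the non-isolated vertices $u \in V(G)$. A 1-conflict-free coloring of $(G,\Hy)$ is precisely a proper conflict-free coloring of $G$. Because $G$ has minimum degree at least $3$, every edge of $\Hy$ has size at least $3$, while $\deg_\Hy(v) = \deg_G(v) \le \Delta$ and $\rank(\Hy) \le \Delta$ for every $v$. The hypothesis of Lemma~\ref{rosenfeld} then reduces to verifying that, for each $v \in V(G)$,
$$
\sum_{e \in E(\Hy),\, e \ni v}\ \sum_{i=1}^{\lfloor |e|/2 \rfloor} S_2(|e|,i)\,\beta^{\,i-|e|+1} \ \le\ \sqrt{\Delta}.
$$
Since there are at most $\Delta$ such edges through $v$, it suffices to show that the inner sum is at most $\Delta^{-1/2}$ for each admissible edge size $d := |e| \in [3,\Delta]$.

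I would split this range at $d = \beta^{19/20}$. For $3 \le d \le \beta^{19/20}$, Lemma~\ref{lem:clm1_simple} applied with $R = \Delta$ yields the inner sum $\le R^{-1/2} = \Delta^{-1/2}$ directly; one readily checks that the hypotheses $\beta \ge 0.6R$, $\beta \ge 600$, $R \ge 750$ hold in each of the three advertised regimes (for instance, $\beta \ge 0.8\Delta \ge 600$ when $\Delta \ge 750$). For $\beta^{19/20} < d \le \Delta$, I would instead invoke the ``moreover'' clause of Lemma~\ref{upper-lem_simple} with $R = \Delta$: specifically, take $\epsilon$ equal to $0.6550826$, $2/3$, and $0.8$ in the three cases respectively, together with an admissible $c \in [0.3, \epsilon/2)$ such that $\epsilon R \le \beta \le R$ holds. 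The conclusion $\sum \le R^{-1/2}$ then applies whenever $d$ exceeds the logarithmic threshold $\tau := \max\bigl\{\tfrac{3.5\log R}{\log(2\epsilon/(\epsilon+2c))},\ \tfrac{4.5\log R}{\log(\epsilon^{1-c}/0.7524)},\ \tfrac{3.5\log R}{\log 1.09577}\bigr\}$.

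The main technical obstacle is numerical: I must ensure $\beta^{19/20} \ge \tau$ in every regime, so that the two subcases together cover all $d \in [3,\Delta]$. Since $\tau = O(\log R)$ while $\beta^{19/20} \ge (0.6550826\,\Delta)^{19/20}$, the gap is enormous once $\Delta$ is moderately large; the delicate point is that the prescribed lower bounds $\Delta \ge 750$, $\Delta \ge 8000$, and $\Delta \ge 1.24811 \cdot 10^8$ are exactly the values at which a valid choice of $c$ becomes possible. The tightest case is the last: as $\epsilon$ approaches $0.6550826$, the denominator $\log(2\epsilon/(\epsilon+2c))$ in the first threshold forces $c$ close to the boundary $\epsilon/2 = 0.3275\ldots$, which in turn drives the required lower bound on $\Delta$ up to $1.24811 \cdot 10^8$. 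Once these numerics are confirmed, summing the per-edge bound $\Delta^{-1/2}$ over at most $\Delta$ edges through $v$ yields $\sqrt{\Delta}$, matching the ``$\sqrt{\Delta}$'' slack in the definition of $a$ and completing the verification of the hypothesis of Lemma~\ref{rosenfeld}.
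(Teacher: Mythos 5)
Your proposal is correct and follows essentially the same route as the paper's proof: the same hypergraph of neighborhoods, the same reduction via Lemma~\ref{rosenfeld} to a per-edge bound of $\Delta^{-1/2}$, the same split at $d=\beta^{19/20}$, and the same invocation of Lemmas~\ref{lem:clm1_simple} and~\ref{upper-lem_simple} with $R=\Delta$ and the indicated $\epsilon$ in each regime (the paper fixes $c=0.32754$, $0.3272$, $0.32$ respectively and verifies $\beta^{19/20}/\log\Delta$ exceeds the resulting thresholds). The only thing left implicit in your sketch is the explicit numerical check, which is exactly what the paper carries out.
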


\begin{proof}
Let \Emph{\mbox{$\Hy$}} be the hypergraph with $V(\Hy)=V(G)$ and $E(\Hy) =\{N(v): v \in V(G)\}$.  
By Lemma~\ref{rosenfeld}, it suffices to
show $\sum_{f \in E(\Hy), f \ni v}\sum_{i=1}^{\lfloor |f|/2 \rfloor}S_2(|f|,i)
\cdot \beta^{i-|f|+1} \leq \sqrt{\Delta}$ for every $v \in V(G)$.  Note that
$\Delta(\Hy) \leq \Delta$.  So it suffices to show, for every $f \in E(\Hy)$, that
\begin{align}
\sum_{i=1}^{\lfloor |f|/2
\rfloor}S_2(|f|,i) \cdot \beta^{i-|f|+1} \leq 1/\sqrt{\Delta}.
\label{mindeg3-eq}
\end{align}

Fix an edge $f$ of $\Hy$.
Every vertex of $G$ has degree at least 3 and at most $\Delta$, so $3 \leq |f| \leq \Delta$. 
Our assumptions for $\Delta$ and $\beta$ imply $\beta \geq 600$.
So \eqref{mindeg3-eq} holds when $|f| \leq \beta^{19/20}$ by Lemma \ref{lem:clm1_simple}.
Hence we may assume $|f| > \beta^{19/20}$.
By Lemma \ref{upper-lem_simple}, it suffices to show $\beta^{19/20} \geq \max\{\frac{7.6\log \Delta}{\log(\epsilon/0.6251)},\frac{2.5\log \Delta}{(1-c)\log(\epsilon/(2c))}\}$, for corresponding choices of $\epsilon$ and $c$.

Let $\epsilon=0.6550826$ and $c=0.32754$.
So $\Delta \geq \beta \geq \epsilon \Delta$.
We have 
$\max\{\frac{7.6}{\log (\epsilon/0.6251)}, \frac{2.5}{(1-c) \log (\epsilon/(2c))}\}\le \max\{164,936689\}=936689$.
By considering the derivative, we know $(\epsilon \Delta)^{19/20}/\log \Delta$ is increasing when $\Delta>10$.
If $\Delta \geq 6.5 \cdot 10^7$, then $\frac{\beta^{19/20}}{\log\Delta}
\geq \frac{(\epsilon \Delta)^{19/20}}{\log \Delta} \geq \frac{(\epsilon \cdot
6.5 \cdot 10^7)^{19/20}}{\log(6.5 \cdot 10^7)} \geq 983377 > 936689 \geq
\max\{\frac{7.6}{\log (\epsilon/0.6251)}, \frac{2.5}{(1-c) \log (\epsilon/(2c))}\}$.
So we are done.

Now we assume $\epsilon=0.\overline{6}$ and $c=0.3272$.
We have 
$\max\{\frac{7.6}{\log (\epsilon/0.6251)}, \frac{2.5}{(1-c) \log (\epsilon/(2c))}\}\le \max\{120, \allowbreak 201\} =201$.
If $\Delta \geq 4000$, then $\frac{\beta^{19/20}}{\log\Delta} \geq
\frac{(\epsilon \Delta)^{19/20}}{\log \Delta} \geq \frac{(\epsilon \cdot 4000)^{19/20}}{\log 4000} \geq 216 \geq 201$.
So we are done.

Finally, we assume $\epsilon=0.8$ and $c=0.32$.  We have 
$\max\{\frac{7.6}{\log (\epsilon/0.6251)}, \frac{2.5}{(1-c) \log (\epsilon/(2c))}\}\le \max\{32, \allowbreak 22\} =32$.
If $\Delta \geq 750$, then $\frac{\beta^{19/20}}{\log\Delta} \geq
\frac{(\epsilon \Delta)^{19/20}}{\log \Delta} \geq \frac{(\epsilon \cdot
750)^{19/20}}{\log 750} \geq 65 \geq 32$.
This proves the lemma.
\end{proof}

Now we are ready to prove Theorem \ref{main_graph}.

\begin{proof}[Proof of Theorem \ref{main_graph}]
Suppose that $G$ is a counterexample with the minimum number of vertices. 
Clearly, $G$ is connected and has at least three vertices.  Let $v$ be a vertex
of $G$ with smallest degree.  By Lemma \ref{mindeg3}, the degree of $v$ is 1 or
2.  Let $x$ and $y$ be the neighbors of $v$, where $x=y$ if $v$ has degree 1. 
If $x \neq y$, and $x$ and $y$ are non-adjacent, then let $G':=G-v+xy$;
otherwise, let $G':=G-v$.  Let $L'$ be the restriction of $L$ to $V(G')$.
Since $G'$ has maximum degree at most $\Delta$, the minimality of $G$ implies
that there exist at least $\beta^{|V(G)|-1}$ proper conflict-free
$L'$-colorings of $G'$.  Hence, to obtain a contradiction, it suffices to show
that for every proper conflict-free $L'$-coloring $\vph$ of $G'$, there are at
least $\beta$ ways to extend $\vph$ to a proper conflict-free $L$-coloring of $G$.

Fix a proper conflict-free $L'$-coloring $\vph$ of $G'$.  Since $G$ is
connected and has at least three vertices, $x$ and $y$ each have degree at
least one in $G'$, by the choice of $v$.  So there exist colors $c_x$ and $c_y$
such that $c_x$ appears on $N_{G'}(x)$ exactly once and $c_y$ appears on
$N_{G'}(y)$ exactly once.  If possible, we choose $c_x \neq \vph(y)$ and $c_y
\neq \vph(x)$.  If $G'=G-v$, then either $N_G(v)=\{x\}$, or $xy \in E(G')$ and
$\vph(x) \neq \vph(y)$, so there are at least $a-4 \geq \beta$ ways to extend
$\vph$ to a proper conflict-free coloring of $G$ by coloring $v$ with a color
in $L(v) \setminus \{\vph(x),\vph(y),c_x,c_y\}$, a contradiction.  So $G'=G-v+xy$
and $x \neq y$.  For each $u \in \{x,y\}$ and $u' \in \{x,y\} \setminus \{u\}$,
if $c_u = \vph(u')$, then let $S_u := \{\vph(z): z \in N_G(u) \setminus
\{v\}\}$; otherwise, let $S_u:=\{c_u\}$.  For the former, since $c_u$ is chosen
to be different from $\vph(u')$ if possible, we know that every color appears
on $N_{G}(u) \setminus \{v\}$ zero times or at least twice, so $|S_u| \leq
|N_{G}(u) \setminus \{v\}|/2 \leq (\Delta-1)/2$; for the latter, $|S_u|=1 \leq
(\Delta-1)/2$.  But there are at least $a-|S_x|-|S_y|-2 \geq a-(\Delta-1)-2
\geq \beta$ ways to extend $\vph$ to a proper conflict-free coloring of $G$ by
coloring $v$ with a color in $L(v) \setminus (S_x \cup S_y \cup
\{\vph(x),\vph(y)\})$, a contradiction.
\end{proof}

To prove Theorem \ref{hyper_main}, we use the following estimate for $S_2(d,i)$,
which we will prove in Section~\ref{sec2}.

\begin{lem} \label{S_2_simple_bound}
If $d$ is a positive integer and $\beta$ is a real number with $d < \beta$,
then 
$$
	\sum_{i=1}^{\left\lfloor {d}/2\right\rfloor}S_2(d,i)\beta^{i-d+1}\le \beta \cdot \frac{(d/\beta)^{\lceil d/2 \rceil}}{1-\frac{d}{\beta}}.
$$
\end{lem}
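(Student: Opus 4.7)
The plan is to reduce the bound to a geometric-series estimate by first establishing the clean auxiliary inequality
\[
S_2(d,i) \leq d^{d-i} \quad \text{for } 1 \leq i \leq \lfloor d/2 \rfloor.
\]

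Granted this auxiliary bound, one substitutes into the sum and reindexes via $k = d - i$ to obtain
\[
\sum_{i=1}^{\lfloor d/2 \rfloor} S_2(d,i)\, \beta^{i-d+1} \leq \beta \sum_{k=\lceil d/2 \rceil}^{d-1} (d/\beta)^k.
\]
Since $d \leq \beta$ (with the boundary case $d = \beta$ rendering the claimed right-hand side vacuous), one may extend the finite sum to the infinite geometric tail and apply the standard formula, yielding precisely $\beta \cdot (d/\beta)^{\lceil d/2 \rceil}/(1 - d/\beta)$, as required.

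For the auxiliary inequality, I would use a double-counting argument. A partition of $[d]$ into $i$ parts of size at least two, together with a chosen ``root'' in each part, corresponds bijectively to a pair $(R, f)$, where $R \subseteq [d]$ with $|R| = i$ is the root set and $f: [d] \setminus R \to R$ is a surjection (the surjectivity precisely encoding the size-$\geq 2$ constraint on parts). Each partition with block sizes $s_1, \ldots, s_i$ produces $\prod_k s_k \geq 2^i$ such pairs, while the total number of pairs is at most $\binom{d}{i} \cdot i^{d-i}$, by the trivial bound on surjections from an $(d-i)$-set to an $i$-set. Combining these estimates,
\[
2^i \cdot S_2(d,i) \leq \binom{d}{i}\, i^{d-i} \leq 2^d \cdot i^{d-i},
\]
which rearranges to $S_2(d,i) \leq (2i)^{d-i} \leq d^{d-i}$; the final step uses the hypothesis $2i \leq d$.

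The main obstacle, such as it is, lies in setting up the double-counting correspondence precisely; once that is in place the remaining inequalities rely only on the elementary estimates $\binom{d}{i} \leq 2^d$ and $2i \leq d$, and the passage from the partial to the infinite geometric sum is routine given $d \leq \beta$. I therefore do not anticipate significant technical difficulty.
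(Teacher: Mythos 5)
Your proposal is correct and follows essentially the same route as the paper: the paper also bounds $S_2(d,i)\le\binom{d}{i}i^{d-i}2^{-i}$ via the identical leaders/roots overcounting argument, then applies $\binom{d}{i}\le 2^d$ and $2i\le d$ to reduce the sum to the geometric tail $\beta\sum_{k\ge\lceil d/2\rceil}(d/\beta)^k$. The only difference is cosmetic — you package the intermediate estimates into the standalone inequality $S_2(d,i)\le d^{d-i}$ before summing, whereas the paper applies them inside the sum.
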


Now we prove Theorem \ref{hyper_main}.

\begin{proof}[Proof of Theorem \ref{hyper_main}]
By Lemma \ref{rosenfeld}, it suffices to prove that $\sum_{f \in E(\Hy), f \ni v}\sum_{i=1}^{\lfloor |f|/2 \rfloor}S_{2}(|f|,i) \cdot \beta^{i-|f|+1} \leq \deg_\Hy(v) \cdot \max\{2\beta(\frac{(\log R)^{2}}{\beta})^{\lceil \mr_\Hy(v)/2 \rceil},(1-10^{-8})^{(\log R)^{2}}\}$ for every $v \in V(G)$, when $R \geq e^{3.1 \cdot 10^8}$.
And it suffices to prove $\sum_{i=1}^{\lfloor |f|/2 \rfloor}S_{2}(|f|,i) \cdot \beta^{i-|f|+1} \leq \max\{2\beta(\frac{(\log R)^{2}}{\beta})^{\lceil \mr_\Hy(v)/2 \rceil},(1-10^{-8})^{(\log R)^{2}}\}$ for every $v \in V(G)$ and $f \in E(\Hy)$ containing $v$, when $R \geq e^{3.1 \cdot 10^8}$.

Fix a vertex $v$ of $G$ and an edge $f$ of $\Hy$ containing $v$.

Suppose $|f| \leq (\log R)^2$.
Since $R \geq e^{5 \cdot 10^6}$, we get $|f| \leq (\log R)^2 \leq 0.5 \cdot 0.65R < \beta$.
By Lemma~\ref{S_2_simple_bound}, 
	$$\sum_{i=1}^{\lfloor |f|/2\rfloor} S_2(|f|,i)\beta^{i-|f|+1} \leq \beta\frac{(|f|/\beta)^{\lceil |f|/2 \rceil}}{1-\frac{|f|}{\beta}} \leq \beta\frac{((\log R)^{2}/\beta)^{\lceil \mr_\Hy(v)/2 \rceil}}{1-\frac{(\log R)^{2}}{0.65R}} \leq 2\beta^{1-\lceil \frac{\mr_\Hy(v)}{2} \rceil}(\log R)^{2\lceil \frac{\mr_\Hy(v)}{2} \rceil}.$$

Assume instead that $|f| > (\log R)^2$.
Let $\epsilon=0.6550826$ and $c=0.32754125$.
Now $0.6251 \leq \epsilon < 1$, $0.3 \leq c < \epsilon/2$, and $\epsilon R \leq \beta \leq R$.
Since $|f| \leq \rank(\Hy) \leq R$ and $R \geq |f| \ge 110$, by Lemma \ref{upper-lem_simple}, 
	\begin{align*}
		\sum_{i=1}^{\lfloor |f|/2\rfloor} S_2(|f|,i)\beta^{i-|f|+1} & \leq 
\frac{|f|R}2\left(\frac{2\cdot 0.32754125}{0.6550826}\right)^{(1-0.32754125)|f|}+ 2R^3\left(\frac{0.6550826}{0.6251}\right)^{-|f|/2} \\ 
		& \le 2.5R^3\cdot (1-10^{-7})^{|f|}. 
	\end{align*}
Because $|f| \geq (\log R)^{2}$, 
	\begin{align*}
		2.5R^3 \cdot (1-10^{-7})^{|f|} & \leq 2R^2 \cdot (1-10^{-7})^{(\log R)^{2}} \\
		& = 2.5R^3 \cdot (\frac{1-10^{-7}}{1-10^{-8}})^{(\log R)^{2}} \cdot (1-10^{-8})^{(\log R)^{2}} \\
		& \leq 2.5R^3 \cdot (\frac{1}{1+10^{-8}})^{(\log R)^{2}} \cdot (1-10^{-8})^{(\log R)^{2}}. 
	\end{align*}
Note that $\log(1+10^{-8}) \geq 9.99999995 \cdot 10^{-9}$.
	Since $R \geq e^{3.1 \cdot 10^8}$, we have $2.5R^3 \leq R^{3.01}$ and $\log R \geq 3.1 \cdot 10^8$, so $3.01\log R \leq (\log R)^{2} \cdot 9.99999995 \cdot 10^{-9} \leq (\log R)^{2} \cdot \log(1+10^{-8})$.
Hence $2.5R^3 \cdot (\frac{1}{1+10^{-8}})^{(\log R)^{2}} \cdot (1-10^{-8})^{(\log R)^{2}} \leq (1-10^{-8})^{(\log R)^{2}}$.
This proves the theorem.
\end{proof}

Finally, we prove Theorem \ref{hyper_fixed_rank}.

\begin{proof}[Proof of Theorem \ref{hyper_fixed_rank}]
Let $v \in V(G)$ and $f \in E(\Hy)$ with $v \in f$.
Since $\rank(\Hy) \leq r$, $|f| \leq r$.
If $R \geq (1+\frac{1}{\epsilon})r$, then $1-\frac{r}{R} \geq \frac{1}{1+\epsilon}$, so by Lemma~\ref{S_2_simple_bound}, $$\sum_{i=1}^{\lfloor |f|/2\rfloor} S_2(|f|,i)R^{i-|f|+1} \leq R\frac{(|f|/R)^{\lceil |f|/2 \rceil}}{1-\frac{|f|}{R}} \leq R\frac{(r/R)^{\lceil \mr_\Hy(v)/2 \rceil}}{1-\frac{r}{R}} \leq (1+\epsilon)R^{1-\lceil \frac{\mr_\Hy(v)}{2} \rceil}r^{\lceil \frac{\mr_\Hy(v)}{2} \rceil}.$$
Hence $\sum_{f \in E(\Hy), f \ni v}\sum_{i=1}^{\lfloor |f|/2 \rfloor}S_{2}(|f|,i) \cdot R^{i-|f|+1} \leq \deg_\Hy(v) \cdot (1+\epsilon)R^{1-\lceil \frac{\mr_\Hy(v)}{2} \rceil}r^{\lceil \frac{\mr_\Hy(v)}{2} \rceil}$.
Therefore, Statement 1 of this theorem follows from Lemma \ref{rosenfeld} (with taking $\beta=R$).

Now we assume $r \leq 4$.
Then $3 \leq |f| \leq 4$. 
If $|f|=3$, then $$\sum_{i=1}^{\lfloor |f|/2\rfloor} S_2(|f|,i)R^{i-|f|+1} = S_2(3,1)R^{-1} = R^{-1}.$$ 
If $|f|=4$, then $$\sum_{i=1}^{\lfloor |f|/2\rfloor} S_2(|f|,i)R^{i-|f|+1} = S_2(4,1)R^{-2} + S_2(4,2)R^{-1} = R^{-2} + \frac{1}{2}{4 \choose 2}R^{-1} = R^{-2}+3R^{-1}.$$
Hence $\sum_{f \in E(\Hy), f \ni v}\sum_{i=1}^{\lfloor |f|/2 \rfloor}S_{2}(|f|,i) \cdot R^{i-|f|+1} \leq \deg_\Hy(v) \cdot (3R^{-1}+R^{-2})$.
Therefore, Statement 2 of this theorem follows from Lemma \ref{rosenfeld} (with taking $\beta=R$).
\end{proof}

\section{Estimates for 2-Associated Stirling Numbers}
\label{sec2}

Recall that, to prove our results in the previous section,
we assumed the correctness of estimates about summations 
involving $S_2(d,i)$ (Lemmas~\ref{lem:clm1_simple}, \ref{upper-lem_simple}, and
\ref{S_2_simple_bound}).  In this section, we prove the second and third of these; 
the proof of Lemma~\ref{lem:clm1_simple} consists of calculation that is 
tedious but straightforward, so we defer it to the appendix.
For simplicity, we denote $S_2(d,i)$ by \Emph{$E_i(d)$} for any positive integers $d$ and $i$.
We will extensively use two simple upper bounds for $E_i(d)$, combined via
$\min\{\cdot,\cdot\}$ in the following lemma.

\begin{lem} \label{two_basic_bound}
For positive integers $i$ and $d$,
	\begin{align*}
		E_i(d) \leq & \min\Big\{{{d}\choose i}i^{{d}-i}2^{-i}, \\
		& \frac{d!}{\lfloor d/2 \rfloor!2^{d/2}} \cdot {\bf 1}_{d/2 \in {\mathbb Z}} + \sum_{j=\max\{3i-d,0\}}^{i-1}\binom{d}{2j}\frac{(2j)!}{j!2^j} \binom{d-2j}{3(i-j)} \frac{(3(i-j))!}{(i-j)!(3!)^{i-j}} (i-j)^{d-2j-3(i-j)}\Big\},
	\end{align*}
where ${\bf 1}_{d/2 \in {\mathbb Z}}=1$ if $d/2 \in {\mathbb Z}$, and ${\bf 1}_{d/2 \in {\mathbb Z}}=0$ otherwise.
\end{lem}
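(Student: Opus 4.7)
The two bounds arise from two different encodings of a partition of $[d]$ into $i$ blocks of size at least $2$; I would prove them separately and the stated inequality follows by taking the minimum.

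\textbf{First bound.} I would interpret $i! \cdot E_i(d)$ as the number of surjections $f : [d] \to [i]$ with every fiber of size at least $2$, and expand it as $\sum_{k_1 + \cdots + k_i = d,\, k_\ell \ge 2} \binom{d}{k_1,\ldots,k_i}$. The key observation is the elementary inequality $k_\ell ! = k_\ell \cdot (k_\ell - 1)! \ge 2 (k_\ell - 1)!$, valid precisely because $k_\ell \ge 2$, which yields $\binom{d}{k_1,\ldots,k_i} \le 2^{-i} \, d! / \prod_\ell (k_\ell - 1)!$. Substituting $k_\ell' := k_\ell - 1 \ge 1$ (so $\sum k_\ell' = d - i$) converts the remaining sum into $\frac{d!}{(d-i)!} \sum \binom{d-i}{k_1',\ldots,k_i'}$, which is at most $\frac{d!}{(d-i)!} \cdot i^{d - i}$ since the multinomial sum over all nonnegative compositions counts functions $[d-i] \to [i]$. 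Dividing by $i!$ gives $E_i(d) \le \binom{d}{i} i^{d-i} 2^{-i}$.

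\textbf{Second bound.} I would stratify the partitions by the number $j$ of blocks of size exactly $2$. When $j = i$, every block is a pair, forcing $d = 2i$; the count is then the perfect-matching number $d! / ((d/2)! \, 2^{d/2})$, which is exactly the isolated first summand (the indicator just records that this case requires $d$ even and, in a harmless overcount for $i \ne d/2$, adds a non-negative term). For $j \le i - 1$, feasibility of $j$ pairs plus $i - j$ blocks of size $\ge 3$ requires $2j + 3(i - j) \le d$, i.e., $j \ge 3i - d$, explaining the range in the sum. To upper-bound the number of such partitions I would: (i) pick $2j$ elements for the size-$2$ blocks and match them in $\binom{d}{2j} (2j)! / (j! \, 2^j)$ ways; (ii) from the $d - 2j$ remaining elements pick $3(i - j)$ ``triple leaders'' and group them into $i - j$ unordered triples in $\binom{d - 2j}{3(i - j)} (3(i-j))! / ((i - j)!\,(3!)^{i-j})$ ways; and (iii) assign each of the $d - 2j - 3(i - j)$ remaining elements to one of the $i - j$ triples, in $(i - j)^{d - 2j - 3(i - j)}$ ways. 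The three leaders in each block guarantee that the resulting block has size $\ge 3$, so every such labeled construction yields a valid partition with exactly $j$ pairs, and conversely every target partition is produced once per choice of $3$ leaders inside each size-$\ge 3$ block, so the product is a bona fide overcount. Summing over $\max\{3i - d, 0\} \le j \le i - 1$ gives the second bound.

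\textbf{Main obstacle.} Neither inequality is difficult on its own; the only subtle points are (a) verifying that $j \ge 3i - d$ is precisely the feasibility range, and (b) convincing oneself that the three-leader encoding is an overcount and not an undercount, which reduces to the remark that every size-$\ge 3$ block admits at least one $3$-subset of leaders, so every valid partition is recovered at least once. Everything else is standard multinomial bookkeeping.
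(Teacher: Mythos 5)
Your proposal is correct and follows essentially the same route as the paper: the second bound is obtained by the identical stratification over the number $j$ of size-$2$ blocks (pairing, then triple leaders, then free assignment, with $j\ge 3i-d$ from feasibility and the matching term isolated for $j=i$), and your first bound is just an algebraic rendering of the paper's leader-based overcounting argument, with the factor $2^{-i}$ coming from each block having at least two elements and $i^{d-i}$ from assigning the non-leaders. No gaps.
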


\begin{proof}
To form a partition of $[d]$ into $i$ parts, each of size at least 2, we can choose $i$ elements to be ``leaders'' and then assign each other element to the part of a leader. (This also forms partitions with parts of size 1 but, since we
seek an upper bound, this is not a problem.)
Since each part has at least two elements, this process overcounts by a factor
of at least $2^i$.  Thus, 
\begin{align*}
E_i(d)\le {{d}\choose i}i^{{d}-i}2^{-i}.
\end{align*}

Now we prove the other upper bound.
To form a partition of $[d]$ into $i$ parts each of size at least 2, we first consider the parts of size exactly 2; denote the number of them by $j$.
To form these $j$ parts of size 2, we choose $2j$ elements and pair them up.
The number of ways to pair $2j$ elements is precisely $\frac{(2j)!}{j!2^j}$.  
If $i=j$, then $d=2i$; if $j<i$, then each of the remaining $i-j$ parts has size at least 3, and to form these, we choose $3(i-j)$ of the remaining elements, group them into triples, then assign each element still remaining to one of these triples.
The number of ways to group $3(i-j)$ elements into triples is $\frac{(3(i-j))!}{(i-j)!(3!)^{i-j}}$.
This gives 
\begin{align*}
E_i(d)\le 
	\frac{d!}{\lfloor d/2 \rfloor!2^{d/2}} \cdot {\bf 1}_{d/2 \in {\mathbb Z}} + \sum_{j=\max\{3i-d,0\}}^{i-1}\binom{d}{2j}\frac{(2j)!}{j!2^j} \binom{d-2j}{3(i-j)} \frac{(3(i-j))!}{(i-j)!(3!)^{i-j}} (i-j)^{d-2j-3(i-j)}. 
\end{align*}
(For the lower bound on the index $j$, note that $2j+3(i-j)\le d$, which implies that $j\ge 3i-d$.)
\end{proof}

\subsection{Proof of Lemma~\ref{S_2_simple_bound}}

We begin by proving Lemma~\ref{S_2_simple_bound}.  For easy reference, we restate it.
This result will also be used when proving Lemma~\ref{lem:clm1_simple}.

\setcounter{lemA}{12}
\begin{lemA} 
\label{S_2_simple_bound_proof}
If $d$ is a positive integer and $\beta$ is a real number with $d < \beta$, then 
$$
	\sum_{i=1}^{\left\lfloor {d}/2\right\rfloor}E_i(d)\beta^{i-d+1}\le \beta \cdot \frac{(d/\beta)^{\lceil d/2 \rceil}}{1-\frac{d}{\beta}}.
$$
\end{lemA}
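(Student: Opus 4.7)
The plan is to reduce the lemma to the elementary estimate
$$E_i(d) \le d^{d-i} \quad \text{for every } 1 \le i \le \lfloor d/2\rfloor, \qquad (\star)$$
after which the desired bound follows from summing a (formally) geometric series. Note that neither of the two bounds packaged in Lemma~\ref{two_basic_bound} seems to give $(\star)$ cleanly via a chain of inequalities when $i$ is close to $d/2$, so I would prove $(\star)$ directly by a combinatorial injection rather than by manipulating those estimates.

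For $(\star)$, I would construct an explicit injection from the set of partitions of $[d]$ into $i$ blocks each of size at least $2$ into $[d]^{d-i}$. Given such a partition $\pi$ with blocks $B_1,\ldots,B_i$ and $m_j := \min(B_j)$, enumerate the non-minimum elements of $[d]$ in increasing order as $x_1<x_2<\cdots<x_{d-i}$ and send $\pi$ to the tuple $\bigl(f_\pi(x_1),\ldots,f_\pi(x_{d-i})\bigr)$, where $f_\pi(x)$ denotes the minimum of the block containing $x$. Because each block has size at least $2$, every $m_j$ equals $f_\pi(x_\ell)$ for some $\ell$, so the image of the tuple (as a set) is exactly $\{m_1,\ldots,m_i\}$ and has cardinality $i$. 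This identifies both the block-minima and, by complement, the non-minima $x_1<\cdots<x_{d-i}$; the tuple then assigns each $x_\ell$ to its block, so $\pi$ is reconstructed and the map is injective.

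With $(\star)$ in hand, the substitution $k=d-i$ rewrites the sum as a truncated geometric series:
$$\sum_{i=1}^{\lfloor d/2\rfloor} E_i(d)\,\beta^{i-d+1} \;\le\; \beta\sum_{k=\lceil d/2\rceil}^{d-1}\left(\frac{d}{\beta}\right)^{\!k} \;\le\; \beta\sum_{k\ge\lceil d/2\rceil}\left(\frac{d}{\beta}\right)^{\!k} \;=\; \beta\cdot\frac{(d/\beta)^{\lceil d/2\rceil}}{1-d/\beta},$$
which is the claimed bound whenever $d<\beta$; the boundary case $d=\beta$ is vacuous since the right-hand side is then $+\infty$. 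The only step requiring genuine care is the verification of $(\star)$, and within it the delicate point is injectivity, which is where the hypothesis $|B_j|\ge 2$ is used in an essential way: it ensures that every block-minimum is actually recorded among the coordinates of the encoding, so that the set of block-minima can be read off directly from the tuple.
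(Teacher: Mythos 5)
Your proof is correct, and at its core it is the same argument as the paper's: establish $E_i(d)\le d^{\,d-i}$ for $i\le \lfloor d/2\rfloor$ and then sum the resulting geometric series, exactly as in your final display. The only divergence is how that key estimate is obtained. The paper invokes the first bound of Lemma~\ref{two_basic_bound}, $E_i(d)\le \binom{d}{i}i^{d-i}2^{-i}$, rewrites it as $\binom{d}{i}2^{-d}(2i)^{d-i}$, and uses $\binom{d}{i}\le 2^d$ together with $2i\le d\le\beta$; you instead prove the bound from scratch by an explicit injection into $[d]^{d-i}$. Your injection is valid (recording the block-minimum of each non-minimal element, with $|B_j|\ge 2$ guaranteeing every minimum appears in the tuple, does reconstruct the partition), and it is really the same ``leaders'' idea used to prove Lemma~\ref{two_basic_bound} in the first place, recast as an injection rather than an overcounting argument. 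One small correction: your claim that neither bound in Lemma~\ref{two_basic_bound} yields $(\star)$ cleanly when $i$ is near $d/2$ is mistaken --- the first bound gives it in two lines via $\binom{d}{i}i^{d-i}2^{-i}=\binom{d}{i}2^{-d}(2i)^{d-i}\le (2i)^{d-i}\le d^{\,d-i}$, which is precisely the route the paper takes. So the self-contained injection buys you independence from Lemma~\ref{two_basic_bound}, at the cost of redoing work the paper has already packaged; both are fine.
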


\begin{proof}
By the first bound in Lemma~\ref{two_basic_bound},
$$\sum_{i=1}^{\left\lfloor {d}/2\right\rfloor}E_i(d)\beta^{i-d+1} \leq
\sum_{i=1}^{\lfloor d/2 \rfloor} {{d}\choose i}i^{{d}-i}2^{-i}\beta^{i-d+1} =
\beta \sum_{i=1}^{\left\lfloor {d}/2\right\rfloor}{{d}\choose
i}2^{-{d}}\left(\frac{2i}{\beta}\right)^{{d}-i}.$$

Since ${d \choose i} \leq 2^d$ and $d < \beta$, 
	$$\beta \sum_{i=1}^{\left\lfloor {d}/2\right\rfloor}{{d}\choose
i}2^{-{d}}\left(\frac{2i}{\beta}\right)^{{d}-i} \le \beta
\sum_{i=1}^{\left\lfloor {d}/2\right\rfloor}\left(\frac{d}{\beta}\right)^{{d}-i}
\le \beta \sum_{j=\left\lceil
{d}/2\right\rceil}^{\infty}\left(\frac{d}{\beta}\right)^j = \beta \frac{({d}/\beta)^{\left\lceil {d}/2\right\rceil}}{1-\frac{d}{\beta}}.$$
\aftermath
\end{proof}

\subsection{Proof of Lemma~\ref{upper-lem_simple}}

In the rest of this section, we prove Lemma~\ref{upper-lem_simple}. 
To upper bound binomial coefficients when proving Lemma~\ref{referee-big}, 
we need the following two upper bounds; for completeness we include their short~proofs.

\begin{prop}
\label{referee1}
If $k$ and $n$ are integers with $0<k<n$, then
$$
\binom{n}{k}\le \frac{n^n}{k^k(n-k)^{n-k}}.
$$
\end{prop}
\begin{proof}
By the Binomial Theorem, we have
$$
n^n = (k+(n-k))^n = \sum_{i=0}^n\binom{n}{i}k^i(n-k)^{n-i}>\binom{n}{k}k^k(n-k)^{n-k}.
$$
\aftermath
\end{proof}

\begin{prop}
\label{referee2}
If $i$ is a positive integer, then
$$
\frac{(2i)!}{i!2^i}\le 2i\left(\frac{2i}e\right)^i.
$$
\end{prop}

\begin{proof}
It is well-known that $\frac{n^n}{e^{n-1}}\le n!\le \frac{n^{n+1}}{e^{n-1}}$; for example, see~\cite{CL} for a complete proof.  
Direct computation gives
$$
\frac{(2i)!}{i!2^i}\le \left(\frac{(2i)^{2i+1}}{e^{2i-1}}\right)/\left(\frac{i^i}{e^{i-1}}\cdot 2^i\right) = \frac{(2i)^{i+1}}{e^i}.
$$
\aftermath
\end{proof}

To prove Lemma~\ref{upper-lem_simple}, we estimate separately the summations of
lower indexed terms and of higher indexed terms.  (We remark that whenever we
write $\sum_{i=a}^b$, we mean that we sum over all terms with index $i$
satisfying $i \in {\mathbb Z}$ and $a \leq i \leq b$, so $a$ and $b$ are not
necessarily integers.)

\begin{lem} \label{lower-lem}
Let $d$ and $R$\aside{$d$, $R$} be positive integers with $d \leq R$.
Let $\epsilon$, $c$, and $\beta$\aaside{$\epsilon$, $c$, $\beta$}{4mm} be real
numbers such that $0<\epsilon<1$ and
$0<c<\frac{\epsilon}{2}$ and $\epsilon R \leq \beta \leq R$.
    Then $$\sum_{i=1}^{cd} E_i(d)\beta^{i-d+1} \leq \frac{d\beta}2 \cdot\left(\frac{2c}{\epsilon}\right)^{(1-c)d}.$$
    Moreover, if $d \ge \frac{\log (R^{2.5})}{(1-c)\log \frac{\epsilon}{2c}}$, then
$$
\sum_{i=1}^{cd} E_i(d)\beta^{i-d+1} \leq \frac12R^{-0.5}.
$$
\end{lem}

\begin{proof}
    By Lemma~\ref{two_basic_bound}, and the fact that ${d\choose i}\le 2^d$,
    \begin{align*}
        \sum_{i=1}^{cd} E_i(d)\beta^{i-d+1} &\le \sum_{i=1}^{cd} {d \choose i} i^{d-i}2^{-i}\beta^{i-d+1}
         \le \beta\sum_{i=1}^{cd}\left(\frac{2i}{\beta}\right)^{d-i}\\
         &\le \beta\sum_{i=1}^{cd}\left(\frac{2cd}{\beta}\right)^{d-i}
         \le \beta\sum_{i=1}^{cd}\left(\frac{2cR}{\epsilon R}\right)^{d-i}
         \le \frac{d\beta}2\left(\frac{2c}{\epsilon}\right)^{(1-c)d}.
    \end{align*}
If also $d\ge \frac{\log (R^{2.5)}}{(1-c)\log\frac{\epsilon}{2c}}$, then we have
    \begin{align*}
        \sum_{i=1}^{cd} E_i(d)\beta^{i-d+1} \leq \frac{d\beta}2\left(\frac{2c}{\epsilon}\right)^{(1-c)d} \leq \frac{d\beta}2\left(\frac{2c}{\epsilon}\right)^{(\log (R^{2.5}))/\log (\epsilon/(2c))}= \frac{d\beta}2R^{-2.5}\le\frac12R^{-0.5}.
    \end{align*}
   \aftermath 
\end{proof}

Our next lemma allows us to focus on partitions counted by $E_i(d)$ that
have many parts of size at least 3, since it shows that these are at
least $1/4$ of all partitions counted by $E_i(d)$.

\begin{lem} \label{p:jkn-lem}
Let $n,k,j$ be positive integers with $j \leq k$.
Perform $k$ independent draws from $\{1,2,...,n\}$ with uniform probability.
Let $p(j,k,n)$ be the probability that at most $j$ distinct elements are collected.
If $2j\le k$ and $2j \leq n$ and $n\ge 110$, then $p(j,k,n)\le 3/4$.
\end{lem}

\begin{proof} 
Clearly $p(j,k+1,n)\le p(j,k,n)$ since drawing fewer times increases the likelihood of at most $j$ distinct draws.
By the union bound and \Cref{referee1} above, letting $\epsilon:=j/n$, we get
	\begin{align}
    		p(j,k,n)&\le p(j,2j,n) \le \binom{n}{j}\left(\frac{j}{n}\right)^{2j} \le \frac{n^n}{j^j(n-j)^{n-j}}\left(\frac{j}n\right)^{2j} \nonumber\\
		& = \frac{n^{n-2j}j^j}{(n-j)^{n-j}} = \frac{n^{n(1-2\epsilon)}(\epsilon n)^{\epsilon n}}{(n(1-\epsilon))^{n(1-\epsilon)}} = \frac{\epsilon^{\epsilon n}}{(1-\epsilon)^{n(1-\epsilon)}} = \left[\frac{\epsilon^\epsilon}{(1-\epsilon)^{(1-\epsilon)}}\right]^n.  \label{p:jkn-ineq} 
	\end{align}
Note that $\frac{1}{n} \leq \epsilon \leq \frac{1}{2}$.
Let $f$ be the function $f(x) = x\log x - (1-x)\log(1-x)$.
So $p(j,k,n) \leq e^{n \cdot f(\epsilon)}$.
Since $f''(x) = \frac{1-2x}{x(1-x)} > 0$ for $\frac{1}{n} \leq x < \frac{1}{2}$, we have that if $2j<n$, then $\frac{1}{n} \leq \epsilon \leq \frac{(n-1)}{2n}$ and $$p(j,k,n) \leq e^{n \cdot f(\epsilon)} \leq \max\{e^{n \cdot f(\frac{1}{n})},e^{n \cdot f(\frac{n-1}{2n})}\}.$$

Note that by Taylor's approximation, $1-x \leq e^{-x}$ for every real number $x$ with $0<x<1$.
When $\epsilon=1/n$, we know $j=1$, so by \eqref{p:jkn-ineq}, $$e^{n \cdot f(\frac{1}{n})} = \frac{n^{n-2}\cdot 1}{(n-1)^{n-1}}\le \left(\frac{n}{n-1}\right)^{n-1}\cdot \frac1n = \left(1+\frac{1}{n-1}\right)^{n-1}\cdot \frac1n \le \frac en < \frac{3}{4}.$$
When $\epsilon=\frac{n-1}{2n}$, we know $j=\frac{n-1}{2}$, so by \eqref{p:jkn-ineq}, $$e^{n \cdot f(\frac{n-1}{2n})} = \frac{n^1\left(\frac{n-1}2\right)^{(n-1)/2}}{\left(\frac{n+1}2\right)^{(n+1)/2}} = \frac{2n}{n+1}\left(\frac{n-1}{n+1}\right)^{\frac{n-1}{2}} \leq 2\left(1-\frac{2}{n+1}\right)^{\frac{n-1}{2}} \leq 2e^{-\frac{n-1}{n+1}} \leq 2e^{-\frac{109}{111}} < \frac 34.$$
This shows that if $2j <n$, then $p(j,k,n) < \frac{3}{4}$.

So it remains to consider the case $2j=n$.
Let $X$ be a random variable denoting the number of elements of $\{1,2,...,n\}$ that are \emph{not} drawn. 
Since $2j=n$, $p(j,k,n) \leq p(j,2j,2j) = \Pr[X \geq j] = \Pr[X \geq n/2]$.
By linearity of expectation, we have $E[X]=n(1-1/n)^n$.  
By Markov's inequality, $$p(j,k,n) \leq \Pr[X \geq n/2] \leq \frac{E[X]}{n/2} = 2(1-1/n)^n \le \frac{2}{e} < \frac{3}{4}.$$  
\aftermath
\end{proof}

\begin{lem} \label{connection_p_partition}
Let $i$ and $d$ be positive integers with $i \leq d/2$. 
Then for every integer $t$ with $1 \leq t \leq \min\{\frac{d}{2}-i,\frac{i}{2}\}$, the number of partitions of $[d]$ into $i$ parts each having size at least 2 for which there are at most $t$ parts of size at least 3 is at most $p(t,d-2i,i) \cdot E_i(d)$, where the function $p$ is defined in Lemma \ref{p:jkn-lem}.
\end{lem}

\begin{proof}
For every partition $\PP$ of $[d]$ into $i$ parts each having size at least 2, let $Z_\PP = \{\{m_P,m'_P\}: P \in \PP\}$, where $m_P$ and $m_P'$ are the minimum and the second minimum of the part $P$ of $\PP$, respectively. 
Note that each $Z_\PP$ is a pairing of $2i$ elements of $[d]$.
For every pairing $Z$ of $2i$ elements of $[d]$, let $A_Z$ be the set consisting of the partitions $\PP$ of $[d]$ into $i$ parts each having size at least 2 with $Z_\PP=Z$.
Hence $\{A_Z: Z$ is a pairing of $2i$ elements of $[d]\}$ is a partition of the set of partitions of $[d]$ into $i$ parts each having size at least 2.
Note that for every such a pairing $Z$, there exists a bijection $\iota_Z$ from $A_Z$ to $[i]^{d-2i}$ defined by for every $P \in A_Z$ and $j \in [d-2i]$, the $j$-th entry of $\iota_Z(P)$ equals $k$, where $k$ is the integer such that the $j$-th smallest element in $[d]$ not used in $Z$ is contained in the part $P$ of $\PP$ for which $m_P$ is the $k$-th smallest element in $\{m_{P'}: P' \in \PP\}$.
Hence for every integer $t$ with $1 \leq t \leq \min\{\frac{d}{2}-i,\frac{i}{2}\}$, the number of partitions in $A_Z$ having at most $t$ parts of size at least 3 equals $p(t,d-2i,i) \cdot |A_Z|$.
\end{proof}

Our next lemma provides the key step in proving our upper bound on the sum of the higher indexed terms.

\begin{lem}\label{referee-big}
Let $i$ and $d$ be integers with $d\ge 110$.  
If $0.3d \leq i \leq d/2$, then $E_i(d)\le 8i(0.6251d)^{d-i}$.
\end{lem}

\begin{proof}
Among the partitions counted by $E_i(d)$, the fraction of partitions contain at most $\min\{\frac{d}{2}-i,\frac{i}{2}\}$ parts of size at least 3 is $p(\min\{\frac{d}{2}-i,\frac{i}{2}\},d-2i,i)$ by Lemma \ref{connection_p_partition}; by Lemma~\ref{p:jkn-lem}, this fraction is at most $3/4$.  
So it suffices to show that the number of partitions of $[d]$ into $i$ parts each having size at least 2 with more than $\min\{\frac{d}{2}-i,\frac{i}{2}\}$ parts of size at least 3 is at most $2i(0.6251d)^{d-i}$.  
(Hence we may assume $i<\frac{d}{2}$, since $i=\frac{d}{2}$ implies that $\min\{\frac{d}{2}-i,\frac{i}{2}\} = \frac{d}{2}-i = 0$ and it is impossible to have a partition of $[d]$ with $i=\frac{d}{2}$ parts of size at least 2 and more than 0 part of size at least 3.)

To count these partitions, we first draw $2i$ elements that we pair together.  Then, for each of the $d-2i$
remaining elements, we assign it to one of the $i$ parts formed by the pairing.  For each part
of size at least 3, there are at least 3 choices for the initial pair of elements.  Thus,
there are at least $3^j$ ways to construct a partition with $j$ parts of size at least 3 by this process.  
Since we only count those with $j>\min\{\frac{d}{2}-i,\frac{i}{2}\}$, we get the inequality $$\frac{1}{4}E_i(d) \leq \binom{d}{2i}\cdot \frac{(2i)!}{i!2^i}\cdot\frac{i^{d-2i}}{3^{\min\{\frac{d}{2}-i,\frac{i}{2}\}}}.$$

We first assume $\min\{\frac{d}{2}-i,\frac{i}{2}\} = \frac{d}{2}-i$.
Then
	\begin{align*}
		\frac{1}{4}E_i(d) &\le \binom{d}{2i}\cdot \frac{(2i)!}{i!2^i}\cdot\frac{i^{d-2i}}{3^{d/2-i}}\\
		&\le \frac{d^d}{(2i)^{2i}(d-2i)^{d-2i}}\cdot 2i\left(\frac{2i}e\right)^i\cdot \left(\frac{i}{\sqrt{3}}\right)^{d-2i}\\ 
		& = 2i\frac{d^d}{(2ei)^i}\left(\frac{i}{\sqrt{3}(d-2i)}\right)^{d-2i}, 
	\end{align*}
where the second inequality follows from \Cref{referee1} and~\Cref{referee2}.
Letting $\epsilon := i/d$, 
	\begin{align*}
		E_i(d) &\le 4 \cdot 2i \cdot d^{d-i}\left[\frac{d^i i^{d-2i}}{(2ei)^i(\sqrt{3}(d-2i))^{d-2i}}\right] \\ 
       		&= 4 \cdot 2i \cdot d^{d-i}\left[\frac{d^{\epsilon d} {(\epsilon d)}^{d-2\epsilon d}}{(2e\epsilon d)^{\epsilon d}(\sqrt{3}d(1-2\epsilon))^{d-2\epsilon d}}\right] \\ 
	        &= 4 \cdot 2i \cdot d^{d-i}\left[\frac{{\epsilon}^{d(1-2\epsilon)}}{(2e\epsilon )^{\epsilon d}(\sqrt{3}(1-2\epsilon))^{d-2\epsilon d}}\right] \\ 
       		&= 4 \cdot 2i \cdot d^{d-i}\left[\frac{{\epsilon}^{(1-2\epsilon)}}{(2e\epsilon )^{\epsilon}(\sqrt{3}(1-2\epsilon))^{1-2\epsilon}}\right]^{d} \\ 
       		&= 4 \cdot 2i \cdot\left[d\left[\frac{{\epsilon}^{(1-2\epsilon)}}{(2e\epsilon )^{\epsilon}(\sqrt{3}(1-2\epsilon))^{1-2\epsilon}}\right]^{\frac1{1-\epsilon}}\right]^{d-i}. 
	\end{align*}
Let 
\begin{align*}
       f(\epsilon) &:= \left[\frac{{\epsilon}^{(1-2\epsilon)}}{(2e\epsilon )^{\epsilon}(\sqrt{3}(1-2\epsilon))^{1-2\epsilon}}\right]^{\frac1{1-\epsilon}}. \\
\end{align*}
Since $\min\{\frac{d}{2}-i,\frac{i}{2}\} = \frac{d}{2}-i$, we have $\frac{d}{3} \leq i < \frac{d}{2}$, so $\frac{1}{3} \leq \epsilon < \frac{1}{2}$.
With Mathematica we know that $\max_{1/3\le \epsilon\le 1/2}f(\epsilon)\le 0.6251$.
Thus, we conclude that $E_i(d)\le 4 \cdot 2i \cdot (0.6251d)^{d-i}$, as claimed. 

Now we assume $\min\{\frac{d}{2}-i,\frac{i}{2}\} = \frac{i}{2}$.
That is, $0.3d \leq i \leq \frac{d}{3}$.
Then 
	\begin{align*}
		\frac{1}{4}E_i(d) &\le \binom{d}{2i}\cdot \frac{(2i)!}{i!2^i}\cdot\frac{i^{d-2i}}{3^{i/2}}\\
		&\le \frac{d^d}{(2i)^{2i}(d-2i)^{d-2i}}\cdot 2i\left(\frac{2i}{e\sqrt{3}}\right)^i\cdot i^{d-2i}\\ 
		& = 2i\frac{d^d}{(2\sqrt{3}ei)^i}\left(\frac{i}{d-2i}\right)^{d-2i}, 
	\end{align*}
where the second inequality follows from \Cref{referee1} and~\Cref{referee2}.
Letting $\epsilon := i/d$, 
	\begin{align*}
		E_i(d) &\le 4 \cdot 2i \cdot d^{d-i}\left[\frac{d^i i^{d-2i}}{(2\sqrt{3}ei)^i(d-2i)^{d-2i}}\right] \\ 
       		&= 4 \cdot 2i \cdot d^{d-i}\left[\frac{d^{\epsilon d} {(\epsilon d)}^{d-2\epsilon d}}{(2\sqrt{3}e\epsilon d)^{\epsilon d}(d(1-2\epsilon))^{d-2\epsilon d}}\right] \\ 
	        &= 4 \cdot 2i \cdot d^{d-i}\left[\frac{{\epsilon}^{d(1-2\epsilon)}}{(2\sqrt{3}e\epsilon )^{\epsilon d}(1-2\epsilon)^{d-2\epsilon d}}\right] \\ 
       		&= 4 \cdot 2i \cdot d^{d-i}\left[\frac{{\epsilon}^{(1-2\epsilon)}}{(2\sqrt{3}e\epsilon )^{\epsilon}(1-2\epsilon)^{1-2\epsilon}}\right]^{d} \\ 
       		&= 4 \cdot 2i \cdot\left[d\left[\frac{{\epsilon}^{(1-2\epsilon)}}{(2\sqrt{3}e\epsilon )^{\epsilon}(1-2\epsilon)^{1-2\epsilon}}\right]^{\frac1{1-\epsilon}}\right]^{d-i}. 
	\end{align*}
Let 
\begin{align*}
       g(\epsilon) &:= \left[\frac{{\epsilon}^{(1-2\epsilon)}}{(2\sqrt{3}e\epsilon )^{\epsilon}(1-2\epsilon)^{1-2\epsilon}}\right]^{\frac1{1-\epsilon}}. \\
\end{align*}
Since $0.3d \leq i \leq \frac{d}{3}$, so $0.3 \leq \epsilon \leq \frac{1}{3}$.
With Mathematica we know that $\max_{0.3 \leq \epsilon\le 1/3}g(\epsilon) \le 0.57< 0.6251$.
Thus, we conclude that $E_i(d)\le 4 \cdot 2i \cdot (0.6251d)^{d-i}$, as claimed. 
\end{proof}

Now we can finally use the previous three lemmas to bound the sum of higher indexed terms.

\begin{lem} \label{upper-lem}
Let $d$ and $R$\aside{$d$, $R$} be positive integers with $110 \leq d \leq R$. 
If $\epsilon$, $c$, and $\beta$\aaside{$\epsilon$, $c$, $\beta$}{5.5mm} are real numbers such that $0.6251\le \epsilon<1$ and $0.3\le c<\frac{\epsilon}{2}$ and $\epsilon R \leq \beta \leq R$, then $$\sum_{i=cd}^{\lfloor d/2\rfloor} E_i(d)\beta^{i-d+1} \leq 2R^3\left(\frac{\epsilon }{0.6251 }\right)^{-d/2}.$$ 
Moreover, if $d \ge \frac{7.6\log R}{\log (\epsilon/0.6251)}$, then $$\sum_{i=cd}^{\lfloor d/2\rfloor} E_i(d)\beta^{i-d+1}\le \frac12R^{-0.5}.$$
\end{lem}

\begin{proof}
By Lemma~\ref{referee-big}, we have
	\begin{align*}
		\sum_{i=cd}^{d/2}E_i(d)\beta^{i-d+1} & \le \sum_{i=cd}^{d/2}8i(0.6251d)^{d-i}\beta^{i-d+1}\\
		&= 8\beta\sum_{i=cd}^{d/2}i\left(\frac{\beta}{0.6251 d}\right)^{i-d}\\
		&= 8\beta\sum_{i=cd}^{d/2}i\left(\frac{\epsilon R}{0.6251 R}\right)^{i-d}\\
		&\le 2R^3\left(\frac{\epsilon }{0.6251 }\right)^{-d/2}. 
	\end{align*}

If also $d\ge (7.6\log R)/(\log (\epsilon/0.6251))$, then $$\sum_{i=cd}^{d/2}E_i(d)\beta^{i-d+1} \le 2R^3\left(\frac{\epsilon}{0.6251}\right)^{(-3.8\log R)/(\log (\epsilon/0.6251))} =2R^3(R^{-3.8}) = 2R^{-0.8}\le \frac12 R^{-0.5}.$$
\aftermath
\end{proof}

\begin{cor}
Lemma~\ref{upper-lem_simple} is true.
\end{cor}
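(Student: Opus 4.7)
The plan is to prove the corollary by directly combining Lemma~\ref{lower-lem} and Lemma~\ref{upper-lem}, which together cover the entire range of summation. Recall that Lemma~\ref{lower-lem} handles the sum $\sum_{i=1}^{cd} E_i(d)\beta^{i-d+1}$ and Lemma~\ref{upper-lem} handles the sum $\sum_{i=cd}^{\lfloor d/2\rfloor} E_i(d)\beta^{i-d+1}$, and the hypotheses of both lemmas ($d \leq R$, $R\geq 50$, $0.6 \leq \epsilon < 1$, $0.3 \leq c < \epsilon/2$, $\epsilon R \leq \beta \leq R$) are precisely those stated in Lemma~\ref{upper-lem_simple}.

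First I would write
\[
\sum_{i=1}^{\lfloor d/2\rfloor} E_i(d)\beta^{i-d+1} \le \sum_{i=1}^{cd} E_i(d)\beta^{i-d+1} + \sum_{i=cd}^{\lfloor d/2 \rfloor} E_i(d)\beta^{i-d+1},
\]
noting that in the edge case where $cd$ happens to be an integer, that single term is double-counted, which only inflates the upper bound; since all summands are nonnegative the inequality still holds. Applying the first (unconditional) conclusion of Lemma~\ref{lower-lem} to the first piece and the first conclusion of Lemma~\ref{upper-lem} to the second piece yields the first claimed inequality of Lemma~\ref{upper-lem_simple} term-by-term.

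For the second (``moreover'') statement, I would observe that the stated threshold on $d$ is exactly the maximum of the three thresholds from the two lemmas: $d \geq \frac{3.5\log R}{\log (2\epsilon/(\epsilon+2c))}$ comes from the ``moreover'' part of Lemma~\ref{lower-lem} (whose threshold $\log(R^{3.5})/\log\frac{2\epsilon}{\epsilon+2c}$ is identical after rewriting), while $d\geq \frac{4.5\log R}{\log(\epsilon^{1-c}/0.7524)}$ and $d\geq \frac{3.5\log R}{\log 1.09577}$ are the two thresholds needed for the ``moreover'' part of Lemma~\ref{upper-lem}. Under these assumptions each of Lemma~\ref{lower-lem} and Lemma~\ref{upper-lem} contributes at most $\tfrac{1}{2}R^{-1/2}$, and summing gives the desired bound $R^{-1/2}$.

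The proof essentially contains no real obstacle: the hard work has already been done in Lemmas~\ref{lower-lem} and~\ref{upper-lem}. The only minor point to check is that the bounds on $\beta$, $\epsilon$, $c$, and $R$ assumed in Lemma~\ref{upper-lem_simple} imply the hypotheses of both lemmas, and that the (slight) overlap at $i=cd$ does not threaten the inequality (which it does not, since each summand is nonnegative and we are only producing an upper bound).
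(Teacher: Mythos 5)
Your proposal is correct and matches the paper's argument exactly: the corollary is obtained by splitting the sum at $i=cd$ and invoking Lemmas~\ref{lower-lem} and~\ref{upper-lem}, whose hypotheses and conclusions combine term-by-term (including the two halves of the ``moreover'' bound, each contributing $\tfrac12 R^{-1/2}$). Your remark about the harmless double-counting at $i=cd$ is a correct, if minor, point of care that the paper leaves implicit.
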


\begin{proof}
This follows immediately from Lemmas~\ref{lower-lem} and \ref{upper-lem}.
\end{proof}

\section{Fractional Coloring}
\label{frac-sec}

The goal of this section is to prove Theorem~\ref{fractional_intro}, which is an asymptotically optimal theorem for fractional proper conflict-free coloring. 
The following is a restatement.

\begin{theorem} \label{fractional}
For every $\epsilon>0$, there exists $d_0$ such that if $\Delta$ is a real number with $\Delta \geq d_0$ and $G$ is a graph with maximum degree at most $\Delta$, then $(G,\Hy)$ is fractionally properly conflict-free $(1+\epsilon)\Delta$-colorable for any hypergraph $\Hy$ with $V(\Hy)=V(G)$ and $\rank(\Hy) \leq \Delta$.
\end{theorem}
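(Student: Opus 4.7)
The plan is a reformulation via LP duality followed by a random-coloring construction. Set $k := \lceil(1+\epsilon/2)\Delta\rceil$, and for each proper $k$-coloring $\chi$ of $G$ let $N_e(\chi) := |\{c \in [k] : |\chi^{-1}(c) \cap e| = 1\}|$ for every $e \in E(\Hy)$. I claim it suffices to exhibit, for all sufficiently large $\Delta$, a probability distribution $\mathcal{D}$ on proper $k$-colorings of $G$ with uniform marginals (i.e.\ $\Pr_{\chi \sim \mathcal{D}}[\chi(v)=c] = 1/k$ for every $v \in V(G)$ and $c \in [k]$) satisfying $\mathbb{E}_{\mathcal{D}}[N_e(\chi)] \geq 1 - o(1)$ (as $\Delta \to \infty$) for every $e \in E(\Hy)$. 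Indeed, the weights $w_I := (1+o(1))\, k \cdot \Pr_{\chi \sim \mathcal D,\, c \sim \mathrm{Unif}([k])}[\chi^{-1}(c) = I]$ on independent sets $I$ of $G$ (where $c$ is drawn independently of $\chi$) satisfy $\sum_I w_I = (1+o(1))k \leq (1+\epsilon)\Delta$, $\sum_{I \ni v} w_I \geq 1$, and $\sum_{I:\,|I \cap e|=1} w_I \geq 1$, so $w$ is a feasible fractional proper conflict-free coloring of $(G,\Hy)$ of value $\leq (1+\epsilon)\Delta$; rationalizing the weights yields the required $(a{:}b)$-coloring.

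The construction of $\mathcal{D}$ is driven by the following observation: if the $\chi(v)$ were i.i.d.\ uniform on $[k]$ (ignoring properness), then $\mathbb{E}[N_e(\chi)] = |e|(1-1/k)^{|e|-1}$, which on $1 \leq |e| \leq k$ is increasing in $|e|$ and equals $1$ at $|e| = 1$. So it suffices to produce a distribution supported on \emph{proper} $k$-colorings whose single-vertex and joint-on-$e$ marginals are within $o(1)$ total-variation distance of the uniform product for every $e \in E(\Hy)$. I will build $\mathcal{D}$ via a R\"odl-nibble / semi-random procedure: in each of $\Theta(\log\Delta)$ rounds, pick each currently uncolored vertex independently with small probability $\tau = \Theta(1/\log\Delta)$, assign it a uniform color from its list of currently available colors (those not used on an already colored neighbor), discard any assignment that conflicts with a simultaneously assigned neighbor's color, and iterate on the vertices still uncolored. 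Standard Azuma/Talagrand-type concentration along the nibble shows that almost surely the process terminates in a proper $k$-coloring, the marginals are $(1\pm o(1))/k$, and correlations on each hyperedge are weak enough to yield $\mathbb{E}[N_e(\chi)] \geq (1-o(1))|e|(1-1/k)^{|e|-1} \geq 1 - o(1)$. Post-composing with an independent uniform random permutation of $[k]$ enforces exactly uniform marginals while leaving every $N_e$ invariant.

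The main obstacle will be controlling the joint distribution of $(\chi(v))_{v \in e}$ when $|e|$ is as large as $\Delta$: the properness constraints on edges of $G$ internal to $e$ induce correlations that must be shown to decay uniformly. Standard nibble analysis yields pairwise near-independence; extending to $|e|$-tuples requires either a direct martingale analysis of $N_e$ across the nibble rounds (treating $N_e$ as a Lipschitz functional of the random choices in each round) or a coupling of the nibble-generated joint law on $e$ to a defect-product distribution and showing the defect is small. Edges of $\Hy$ with $|e| \in \{1,2\}$ are handled directly and favorably: for $|e|=1$, $N_e \equiv 1$; for $|e|=\{u,v\}$, uniform marginals give $\mathbb{E}[N_e] = 2\,\Pr[\chi(u)\neq\chi(v)] \geq 2(1-1/k)$, which exceeds $1$ for $\Delta$ large.
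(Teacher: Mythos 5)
Your LP reformulation is sound and is a legitimate (primal) alternative to the paper's route: the paper instead passes to the LP dual (its Lemma~\ref{dual}), which reduces the theorem to producing, for arbitrary nonnegative weights $f$ on $V(G)$ and $g$ on $E(\Hy)$ summing to $1$, a \emph{single} stable set $S$ with $\sum_{v\in S}f(v)+\sum_{z:|z\cap S|=1}g(z)\ge \frac{1}{(1+\epsilon)\Delta}$. That dual reduction is what makes the probabilistic step easy there: one samples each vertex independently with probability $p=\frac{\log\Delta}{\Delta}$, deletes vertices with more than $(1+\epsilon)p\Delta$ sampled neighbors (a Chernoff bound shows this costs only a $(1-\epsilon)$ factor), and then takes the best color class of a proper coloring of the resulting low-degree induced subgraph. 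Because the sampling is genuinely independent, $\Pr[|z\cap S'|=1]$ is computed exactly as $|z|\,p(1-p)^{|z|-1}$ up to the deletion correction, and $\Delta(1-\tfrac{\log\Delta}{\Delta})^{\Delta}\to 1$ handles hyperedges of size up to $\Delta$. No control of a joint coloring distribution is ever needed.

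The gap in your proposal is exactly the step you flag as ``the main obstacle'' and then do not resolve: the claim $\mathbb{E}_{\mathcal D}[N_e(\chi)]\ge 1-o(1)$ for the nibble-generated distribution. Concentration (Azuma/Talagrand) is not the issue --- the issue is the \emph{first moment}, i.e.\ showing that for a hyperedge $e$ with $|e|$ as large as $\Delta$ the joint law of $(\chi(v))_{v\in e}$ is multiplicatively within $1+o(1)$ of a product measure, so that $\Pr[\chi(u)=c,\ \chi(w)\ne c\ \forall w\in e\setminus\{u\}]\approx \frac1k(1-\frac1k)^{|e|-1}$. Since $(1-\frac1k)^{|e|-1}$ can be as small as a constant, this demands product-like behavior of a $\Delta$-dimensional marginal at constant multiplicative precision; but after $T=\Theta(\log\Delta)$ nibble rounds the color of each vertex depends on the randomness in its radius-$T$ ball, and the vertices of a hyperedge such as $N(v)$ all lie within distance $2$ of one another, so their colors share almost all of that randomness and are not obviously weakly correlated. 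Standard nibble analyses control pairwise or bounded-size correlations, not $\Delta$-tuples. (There is also the secondary issue that the nibble typically leaves an uncolored remainder that must be completed greedily, destroying distributional control on those vertices, which may be numerous inside a single hyperedge.) As written, the heart of the argument is an unproven assertion; either carry out the martingale/coupling analysis you allude to, or switch to the dual formulation, where independent Bernoulli sampling makes the required probability computable exactly.
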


To prove the theorem, we consider the dual problem of fractional coloring that transforms the problem to finding maximum weighted stable sets with specific properties and is easier to work with.
We state the dual problem and prove the duality in Lemma~\ref{dual}.
The remaining task, Lemma~\ref{weighted_1}, is to construct a desired stable set randomly.  
We first randomly construct an induced subgraph with small maximum degree (and hence with small chromatic number) with a very large fraction of the weight by using concentration inequalities, and then choose a stable set from it that hits a large fraction of the weight.  

\begin{lem} \label{dual}
If $G$ is a graph and $\Hy$ is a hypergraph with $V(\Hy)=V(G)$, then for every positive real number $t$, the following two statements are equivalent.
	\begin{enumerate}
		\item[(1)] $(G,\Hy)$ is fractionally properly conflict-free $t$-colorable. 
		\item[(2)] For any functions $f: V(G) \rightarrow {\mathbb R}_{\geq 0}$ and $g: E(\Hy) \rightarrow {\mathbb R}_{\geq 0}$ with $\sum_{v \in V(G)}f(v)+ \sum_{z \in E(\Hy)} g(z)=1$, there exists a stable set $S$ in $G$ such that $\sum_{v \in S}f(v) + \sum_{z \in E(\Hy), |z \cap S|=1}g(z) \geq \frac{1}{t}$.
	\end{enumerate}
\end{lem}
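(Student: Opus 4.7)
The plan is to prove both implications via linear programming duality. The direction $(1)\Rightarrow(2)$ is a short averaging argument: given a proper conflict-free $(x:y)$-coloring $\vph$ with $x/y\le t$, the preimages $S_\ell:=\vph^{-1}(\ell)$ for $\ell\in[x]$ are stable sets of $G$, and for any $f,g\ge 0$ with $\sum_v f(v)+\sum_z g(z)=1$, swapping the order of summation gives
\[
\sum_{\ell=1}^x\Big(\sum_{v\in S_\ell}f(v)+\sum_{z:|z\cap S_\ell|=1}g(z)\Big)
=\sum_v|\vph(v)|f(v)+\sum_z\big|\{\ell:|z\cap S_\ell|=1\}\big|g(z)\ge y,
\]
using $|\vph(v)|=y$ and the conflict-free property. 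Hence some $S_\ell$ achieves the required bound $\ge y/x\ge 1/t$.

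For $(2)\Rightarrow(1)$, I would invoke LP duality on the primal--dual pair
\begin{align*}
\tau&:=\min\Big\{\sum_S\mu_S:\mu\ge 0,\ \sum_{S\ni v}\mu_S\ge 1\ \forall v,\ \sum_{S:|z\cap S|=1}\mu_S\ge 1\ \forall z\Big\},\\
\tau&=\max\Big\{\sum_v f(v)+\sum_z g(z):f,g\ge 0,\ \sum_{v\in S}f(v)+\sum_{z:|z\cap S|=1}g(z)\le 1\ \forall S\Big\},
\end{align*}
where $S$ ranges over stable sets of $G$. By scale invariance, Statement~(2) says precisely that the dual value is at most $t$, which by strong LP duality yields $\tau\le t$. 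Since the LP has rational data, the optimum is attained at a rational primal solution; clearing denominators produces positive integers $x,y$ with $x/y=\tau\le t$ and integer multiplicities $n_S\ge 0$ summing to $x$ that satisfy $\sum_{S\ni v}n_S\ge y$ and $\sum_{S:|z\cap S|=1}n_S\ge y$. Listing the stable sets with multiplicity as $T_1,\ldots,T_x$ and setting $\vph(v):=\{\ell\in[x]:v\in T_\ell\}$ then yields a proper conflict-free $(x:y)$-coloring, \emph{provided} $|\vph(v)|=y$ for every $v$.

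The main obstacle is exactly this last provision: nothing in the inequality-constrained covering LP forces $\sum_{S\ni v}n_S=y$ exactly, so some vertices could lie in more than $y$ of the $T_\ell$'s. I plan to handle this by showing that the strengthened LP in which the vertex constraints are sharpened to equalities $\sum_{S\ni v}\mu_S=1$ has the same optimum value $\tau$. The standard graph-theoretic fix of dropping $v$ from excess color classes (exploiting that subsets of stable sets are stable) is unsafe here, because a color class $T_\ell$ with $z\cap T_\ell=\{v\}$ becomes useless for the hyperedge $z$ once $v$ is removed. Instead I plan a local exchange: whenever $\sum_{S\ni v}\mu^\star_S>1$ at an LP optimum, transfer a small amount of weight from some $S\ni v$ to $S\setminus\{v\}$, and compensate any resulting damage to a hyperedge $z$ with $z\cap S=\{v\}$ by simultaneously shifting weight between other sets $S'$ with $|z\cap S'|=1$. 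Complementary slackness furnishes the dual certificates that make such compensations available; scaling the rational optimum of the strengthened LP then produces the desired coloring with $\vph^{-1}(\ell)=T_\ell$ and $|\vph(v)|=y$ for every $v$.
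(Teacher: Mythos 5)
Your overall route is the same as the paper's: direction (1)$\Rightarrow$(2) by averaging over the color classes of an $(x:y)$-coloring and applying pigeonhole, and direction (2)$\Rightarrow$(1) by LP duality applied to the covering LP over stable sets (the paper runs the duality argument in the contrapositive, normalizing a dual solution of value greater than $t$ to contradict (2), but this is the same mechanism as your direct appeal to strong duality). Your first direction is complete and correct, and your reduction of (2) to the statement that the dual value is at most $t$ is also fine.

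The second direction, however, is not finished, and you say so yourself. Everything up to ``provided $|\varphi(v)|=y$ for every $v$'' is sound, and you are right that the naive fix of deleting $v$ from excess color classes can destroy the conflict-free property for a hyperedge $z$ with $z\cap T_\ell=\{v\}$. But the repair you then describe is only a plan. You assert, without proof, that the LP with the vertex constraints sharpened to equalities has the same optimum, and that ``complementary slackness furnishes the dual certificates that make such compensations available''; you never exhibit the compensating moves or argue that they exist. The difficulty is not local: moving weight from $S$ to $S\setminus\{v\}$ can violate a tight constraint for a hyperedge $z$ with $z\cap S=\{v\}$, and any compensating shift among other sets $S'$ with $|z\cap S'|=1$ changes the coverage of the vertices of those sets and the counts for other hyperedges, so the exchanges can cascade; you give no feasibility or termination argument for this process. (To be fair, the paper does not prove this step either --- it declares the equivalence between colorability and feasibility of the inequality-constrained LP to be ``easy to see'' --- so you have correctly isolated the one genuinely delicate point in the argument; you just have not closed it.) As written, your proof of (2)$\Rightarrow$(1) therefore rests on an unproven lemma, and the proposal is incomplete at exactly the step you flag.
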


\begin{lem} 
\label{weighted_1}
For every $\epsilon>0$, there exists an integer $d_0$ such that if $\Delta \geq d_0$, $G$ is a graph with maximum degree at most $\Delta$, $\Hy$ a hypergraph with $V(\Hy)=V(G)$ and $\rank(\Hy) \leq \Delta$, and $f: V(G) \rightarrow {\mathbb R}_{\geq 0}$ and $g: E(\Hy) \rightarrow {\mathbb R}_{\geq 0}$ are functions with $\sum_{v \in V(G)}f(v)+\sum_{z \in E(\Hy)}g(z)=1$, then there exists a stable set $S$ of $G$ such that $\sum_{v \in S}f(v) + \sum_{z \in E(\Hy), |z \cap S|=1}g(z) \geq \frac{(1-\epsilon)^2}{(1+2\epsilon)\Delta}$.
\end{lem}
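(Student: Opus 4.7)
The plan is to construct $S$ via a two-stage random selection, along the lines hinted just before Lemma~\ref{weighted_1}. Fix a small $\epsilon_1 \in (0, \epsilon)$ (to be tuned) and set $p := (\log \Delta)/\Delta$. Include each vertex of $V(G)$ independently with probability $p$ in a random set $T$, and let $T' := \{v \in T : |N(v) \cap T| \leq (1+\epsilon_1)\log \Delta\}$. By construction $\Delta(G[T']) \leq (1+\epsilon_1)\log \Delta$, so $G[T']$ admits a proper coloring with at most $(1+\epsilon_1)\log \Delta + 1$ color classes, and $S$ will be chosen as the heaviest such class for a suitable realization of $T$.

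The central step is to show $\mathbb{E}[X'] \geq (1-\epsilon_1)^2 p$, where $X' := \sum_{v \in T'} f(v) + \sum_{z \in E(\Hy),\, |z \cap T'|=1} g(z)$. For the vertex term, dominating $|N(v) \cap T|$ by a $\mathrm{Bin}(\Delta, p)$ variable (of mean $\log \Delta$) and applying Lemma~\ref{chernoff} gives $P(v \in T') \geq p(1 - 2\Delta^{-\epsilon_1^2/3}) \geq (1-\epsilon_1)p$ for $\Delta$ large. For the hyperedge term, fix $z \in E(\Hy)$ and $v \in z$ and condition on the probability-$p(1-p)^{|z|-1}$ event $\{v \in T\} \cap \{(z \setminus \{v\}) \cap T = \emptyset\}$; under this conditioning $|N(v) \cap T| = |(N(v) \setminus z) \cap T| \sim \mathrm{Bin}(|N(v) \setminus z|, p)$, still of mean at most $\log \Delta$, so Chernoff again yields a conditional probability at least $1 - \epsilon_1$ that $v \in T'$. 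Summing the resulting events (disjoint across $v \in z$) gives $P(|z \cap T'|=1) \geq (1-\epsilon_1) \cdot |z| p (1-p)^{|z|-1}$. Applying Lemma~\ref{x(1-p)x} to the unimodal function $x \mapsto x(1-p)^x$ on $[1, \Delta]$ reduces its minimum to the endpoints, and Lemma~\ref{limit_x1-logxx} gives $\Delta(1-p)^\Delta \to 1$ as $\Delta \to \infty$; hence $|z| p (1-p)^{|z|-1} \geq (1-\epsilon_1)p$ uniformly for $1 \leq |z| \leq \Delta$ once $\Delta$ is large. Combining with $\sum_v f(v) + \sum_z g(z) = 1$ yields $\mathbb{E}[X'] \geq (1-\epsilon_1)^2 p$.

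To finish, fix a realization of $T$ with $X' \geq \mathbb{E}[X']$. Summing $\sum_{v \in C} f(v) + \sum_{z : |z \cap C|=1} g(z)$ over the color classes $C$ of a proper coloring of $G[T']$ is at least $X'$ (each $z$ with $|z \cap T'|=1$ contributes $g(z)$ to exactly one class, while all other $z$ contribute non-negatively), so the heaviest class $S$ satisfies
\[ \sum_{v \in S} f(v) + \sum_{z : |z \cap S|=1} g(z) \;\geq\; \frac{X'}{(1+\epsilon_1)\log \Delta + 1} \;\geq\; \frac{(1-\epsilon_1)^2}{\Delta\bigl((1+\epsilon_1) + (\log \Delta)^{-1}\bigr)}. \]
Choosing $\epsilon_1 := \epsilon$ and $d_0$ large enough that $(\log \Delta)^{-1} \leq \epsilon$ and all Chernoff tails used above are at most $\epsilon$ gives the required bound $(1-\epsilon)^2/((1+2\epsilon)\Delta)$. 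The main obstacle is the middle paragraph: establishing $\mathbb{E}[X'] \geq (1-\epsilon_1)^2 p$ requires a conditional Chernoff step to decouple the event $v \in T'$ from the event $(z \setminus \{v\}) \cap T = \emptyset$, and a combined use of Lemmas~\ref{x(1-p)x} and~\ref{limit_x1-logxx} to prevent the ``hit exactly once'' probability $|z| p (1-p)^{|z|-1}$ from collapsing at either endpoint of the range $|z| \in [1, \Delta]$.
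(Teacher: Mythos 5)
Your proposal is correct and follows essentially the same route as the paper's proof: the same $p=\log\Delta/\Delta$, the same degree-truncated random set, the same conditional Chernoff decoupling for the hyperedge term, the same use of Lemmas~\ref{x(1-p)x} and~\ref{limit_x1-logxx} to control $|z|p(1-p)^{|z|-1}$ over the full range of $|z|$, and the same final averaging over the color classes of the low-degree induced subgraph.
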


Before proving Lemmas~\ref{dual} and~\ref{weighted_1}, we prove Theorem \ref{fractional} assuming the lemmas.

\begin{proof}[Proof of Theorem \ref{fractional}]
To show $(G,\Hy)$ is fractionally properly conflict-free
$(1+\epsilon)\Delta$-colorable, by Lemma~\ref{dual}, it suffices, given
functions $f: V(G) \rightarrow {\mathbb R}_{\geq 0}$ and $g: E(\Hy) \rightarrow
{\mathbb R}_{\geq 0}$ with $\sum_{v \in V(G)}f(v)+\sum_{z \in E(\Hy)}g(z)=1$, to
show there is a stable set $S$ of $G$ with $\sum_{v \in S}f(v) + \sum_{z \in E(\Hy), |z \cap S|=1}g(z) \geq \frac{1}{(1+\epsilon)\Delta}$.

By Lemma~\ref{weighted_1}, for every $\epsilon_0>0$, there exists an integer $d_0$ such that if $\Delta\ge d_0$, then there exists a stable set $S$ in $G$ with $\sum_{v \in S}f(v) + \sum_{z \in E(\Hy), |z \cap S|=1}g(z) \geq \frac{(1-\epsilon_0)^2}{(1+2\epsilon_0)\Delta}$.
Since $\lim_{x \to 0}\frac{1+2x}{(1-x)^2} = 1$, there exists $\epsilon_0>0$
such that $\frac{1+2\epsilon_0}{(1-\epsilon_0)^2} \leq 1+\epsilon$.
Let $d_0$ be the constant mentioned in Lemma~\ref{weighted_1} for $\epsilon_0$.
Now we are done by Lemma~\ref{weighted_1}.
\end{proof}

\begin{proof}[Proof of Lemma~\ref{dual}]
We begin by formulating fractional proper conflict-free coloring as a linear
program.
Let \Emph{$A_1$} be a matrix with rows indexed by $V(G)$ and columns indexed by the set of all stable sets in $G$, such that for each $v \in V(G)$ and stable set $S$ in $G$, the entry of $A_1$ in the $v$-th row and $S$-th column equals 1 if $v \in S$ and equals 0 otherwise.  
Let \Emph{$A_2$} be a matrix with rows indexed by $E(\Hy)$, and columns indexed by the set of all stable sets in $G$, in the same order as in $A_1$, and for each $z \in E(\Hy)$ and stable set $S$ in $G$, the entry of $A_2$ in the $z$-th row and $S$-th column equals 1 if $|z \cap S|=1$ and equals 0 otherwise.  
Let \Emph{$A$} be the matrix with $|V(G)|+|E(\Hy)|$ rows such that its first $|V(G)|$ rows form $A_1$ and its last $|E(\Hy)|$ rows form $A_2$.
In the rest of proof, we will frequently denote by 1 a vector all of whose entries are 1.

It is easy to see that $(G,\Hy)$ is fractionally properly conflict-free $t$-colorable if and only if there exists a nonnegative rational vector $x$ with $1^Tx \leq t$ and $Ax \geq 1$, where $x\in[0,1]^{|\mathcal{I}(G)|}$, where $\mathcal{I}(G)$ is the set of
all independent sets of $G$.  
Since $A$ is an integral matrix, the fractional proper conflict-free chromatic number of $(G,\Hy)$ equals $\min_x 1^Tx$ over nonnegative real vectors $x$ with $Ax \geq 1$; moreover, this minimum is attained by a nonnegative rational vector $x$.

Now we prove that (1) implies (2).  
Assume that the fractional proper conflict-free chromatic number of $(G,\Hy)$ is at most $t$.
So there exist positive integers $a,b$ with $a/b \leq t$ and a proper\aside{$t$, $a$, $b$} conflict-free $(a:b)$-coloring $\varphi$ of $(G,\Hy)$.  
Let $f,g$ be functions as given in (2).  
Since $\varphi$ is a proper conflict-free\aside{$f$, $g$, $\vph$} $(a:b)$-coloring of $(G,\Hy)$, by the definition of $(a:b)$-coloring,
	\begin{align*}
		\sum_{i=1}^a\left(\sum_{v \in \varphi^{-1}(i)}f(v) + \sum_{\substack{z \in E(\Hy)\\ |z \cap \varphi^{-1}(i)|=1}}g(z)\right) & \geq b\sum_{v \in V(G)}f(v) + \sum_{i=1}^a\sum_{\substack{z \in E(\Hy)\\ |z \cap \varphi^{-1}(i)|=1}}g(z) \\
		& \geq b\sum_{v \in V(G)}f(v) + b\sum_{z \in E(\Hy)}g(z) = b.
	\end{align*}

By the pigeonhole principle, there exists $i \in [a]$ with $\sum_{v \in \varphi^{-1}(i)}f(v) + \sum_{z \in E(\Hy), |z \cap \varphi^{-1}(i)|=1}g(z) \geq \frac{b}{a} \geq \frac{1}{t}$.  
So (2) holds, since $\varphi^{-1}(i)$ is a stable set.

Now we prove that (2) implies (1).  
Assume that (2) holds.  
Suppose to the contrary that the fractional proper conflict-free chromatic number of $(G,\Hy)$ is greater than \Emph{$t$}.  
By the duality theorem of linear programming, there exist nonnegative functions $f_1: V(G) \rightarrow {\mathbb R}_{\geq 0}$ and $g_1: E(\Hy) \rightarrow {\mathbb R}_{\geq 0}$\aside{$f_1$, $g_1$} such that $\sum_{v \in V(G)}f_1(v)+ \sum_{z \in E(\Hy)}g_1(z)>t$, and for every stable set $S$ in $G$, we have $\sum_{v \in S}f_1(v) + \sum_{z \in E(\Hy), |z \cap S|=1}g_1(z) \leq 1$.  
Let $s := \sum_{v \in V(G)}f_1(v)+\sum_{z \in E(\Hy)}g_1(z)$.\aside{$s$}
Note that $s>t$.  Let $f$ and $g$ be the functions such that
$f:=\frac{1}{s} \cdot f_1$ and $g:=\frac{1}{s} \cdot g_1$.\aside{$f$, $g$}
Hence $\sum_{v \in V(G)}f(v)+\sum_{z \in E(\Hy)}g(z) = \frac{1}{s} \cdot (\sum_{v \in V(G)}f_1(v)+\sum_{z \in E(\Hy)}g_1(z)) = 1$, and for every stable set $S$ in $G$, we have $\sum_{v \in
S}f(v) + \sum_{z \in E(\Hy), |z \cap S|=1}g(z) = \frac{1}{s} \cdot (\sum_{v \in S}f_1(v) + \sum_{z \in E(\Hy), |z \cap S|=1}g_1(z)) \leq \frac{1}{s} < \frac{1}{t}$, contradicting (2).
\end{proof}

To prove Lemma~\ref{weighted_1}, the second main step in our plan, we need the following three lemmas to bound various probabilities.  
The first is the Chernoff Bound, which is well-known (proofs are available in most probability textbooks).  
The second and third are straightforward applications of elementary calculus, so we defer their proofs to the appendix.

\begin{lem}[Chernoff bound] \label{chernoff}
Let $X_1,\ldots,X_n$ be i.i.d.\ random variables such that for every $i \in[n]$, we have $X_i=1$ with probability $p$ and $X_i=0$ with probability $1-p$.
Let $X:=\sum_{i=1}^nX_i$.  For every $\delta$ with $0<\delta<1$, we have $P(|X-{\mathbb E}[X]| \geq \delta {\mathbb E}[X]) \leq 2e^{-\delta^2{\mathbb E}[X]/3}$.
\end{lem}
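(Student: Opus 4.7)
The plan is to establish this Chernoff bound via the standard moment-generating-function (Bernstein/Chernoff) technique, handling the upper and lower tails separately and then taking a union bound.

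First I would set $\mu := {\mathbb E}[X] = np$ and, for the upper tail, apply Markov's inequality to the random variable $e^{tX}$ for an as-yet-unchosen parameter $t > 0$. Independence of the $X_i$ factors the moment generating function as ${\mathbb E}[e^{tX}] = \prod_i {\mathbb E}[e^{tX_i}] = (1 + p(e^t - 1))^n$, and the elementary inequality $1+x \le e^x$ upgrades this to ${\mathbb E}[e^{tX}] \le e^{\mu(e^t - 1)}$. This gives
$$P(X \ge (1+\delta)\mu) \le \frac{e^{\mu(e^t - 1)}}{e^{(1+\delta)\mu t}}$$
for every $t>0$. Optimizing by setting $t = \log(1+\delta)$ yields the classical bound $P(X \ge (1+\delta)\mu) \le \left(\frac{e^\delta}{(1+\delta)^{1+\delta}}\right)^\mu$.

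The second step, which I expect to be the main technical obstacle, is to reduce this somewhat awkward expression to the clean form $e^{-\delta^2 \mu / 3}$. Concretely, I would verify the scalar inequality
$$\frac{e^\delta}{(1+\delta)^{1+\delta}} \le e^{-\delta^2/3} \qquad \text{for all } 0 < \delta < 1,$$
by defining $h(\delta) := \delta - (1+\delta)\log(1+\delta) + \delta^2/3$, showing $h(0) = 0$, and checking via a short derivative computation that $h'(\delta) \le 0$ on $(0,1)$ (using the Taylor expansion $\log(1+\delta) \ge \delta - \delta^2/2 + \delta^3/3 - \cdots$ to lower-bound $(1+\delta)\log(1+\delta)$ by $\delta + \delta^2/2 - \delta^3/6$, which dominates $\delta + \delta^2/3$ on this range).

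For the lower tail, I would repeat the Markov-on-$e^{-tX}$ argument with $t>0$: $P(X \le (1-\delta)\mu) \le e^{\mu(e^{-t}-1)} / e^{-(1-\delta)\mu t}$, optimize at $t = -\log(1-\delta)$ to obtain $P(X \le (1-\delta)\mu) \le \left(\frac{e^{-\delta}}{(1-\delta)^{1-\delta}}\right)^\mu$, and then verify the analogous scalar inequality $\frac{e^{-\delta}}{(1-\delta)^{1-\delta}} \le e^{-\delta^2/3}$ on $(0,1)$ (this side is in fact easier, giving the sharper constant $1/2$, but $1/3$ suffices and matches the other tail). A union bound over the two events $\{X \ge (1+\delta)\mu\}$ and $\{X \le (1-\delta)\mu\}$ then yields the factor of $2$ in the stated bound $P(|X - \mu| \ge \delta \mu) \le 2 e^{-\delta^2 \mu / 3}$. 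The only nonroutine ingredient is the pair of elementary but slightly fiddly scalar inequalities comparing $(1\pm\delta)\log(1\pm\delta)$ with quadratic lower bounds; everything else is bookkeeping.
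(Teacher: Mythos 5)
Your proposal is correct: it is the standard moment-generating-function proof of the multiplicative Chernoff bound, and both scalar inequalities you need ($\frac{e^{\delta}}{(1+\delta)^{1+\delta}}\le e^{-\delta^2/3}$ and $\frac{e^{-\delta}}{(1-\delta)^{1-\delta}}\le e^{-\delta^2/2}$ on $(0,1)$) are true and provable exactly as you sketch. The paper does not prove this lemma at all --- it explicitly cites it as well-known and available in probability textbooks --- so your argument is precisely the omitted standard proof rather than an alternative to anything in the paper. One cosmetic remark: your parenthetical bound $(1+\delta)\log(1+\delta)\ge \delta+\delta^2/2-\delta^3/6\ge \delta+\delta^2/3$ already yields $h(\delta)\le 0$ directly, so the framing via ``checking $h'(\delta)\le 0$'' is unnecessary (and $h'$ in fact vanishes at $0$, so one would need a second derivative there); this is a presentational wrinkle, not a gap.
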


\begin{lem} \label{x(1-p)x}
Let $p$ be a real number with $0<p<1$.  Let $a,b$ be real numbers.
If $f(x):=x(1-p)^x$ for every real number $x$,
then $f(x) \geq \min\{f(a),f(b)\}$ for every $x$ with $a \leq x \leq b$.
\end{lem}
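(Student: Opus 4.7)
The plan is to show that $f$ is unimodal on all of $\mathbb{R}$, from which the claim follows by a routine case analysis on where its critical point sits relative to $[a,b]$.

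First I would rewrite $f(x) = x e^{cx}$ where $c := \ln(1-p)$. Since $0 < p < 1$, we have $c < 0$. Differentiating gives $f'(x) = e^{cx}(1+cx)$, and since $e^{cx} > 0$ for all $x$, the sign of $f'(x)$ agrees with the sign of $1+cx$. Thus $f'(x) = 0$ exactly at $x_0 := -1/c > 0$, with $f'(x) > 0$ for $x < x_0$ and $f'(x) < 0$ for $x > x_0$. Therefore $f$ is strictly increasing on $(-\infty, x_0]$ and strictly decreasing on $[x_0, \infty)$.

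Now fix any $x$ with $a \leq x \leq b$, and split into three cases based on the location of $x_0$ relative to $[a,b]$. If $x_0 \leq a$, then $f$ is decreasing on $[a,b]$, so $f(x) \geq f(b) \geq \min\{f(a),f(b)\}$. Symmetrically, if $x_0 \geq b$, then $f$ is increasing on $[a,b]$, giving $f(x) \geq f(a) \geq \min\{f(a),f(b)\}$. Finally, if $a < x_0 < b$, then when $x \in [a, x_0]$ monotonicity gives $f(x) \geq f(a)$, and when $x \in [x_0, b]$ monotonicity gives $f(x) \geq f(b)$; in either subcase $f(x) \geq \min\{f(a), f(b)\}$.

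There is no substantive obstacle: the statement is essentially the observation that a unimodal function attains its minimum on a closed interval at an endpoint. The only mild care needed is to note that the conclusion is vacuous when $f(a)$ or $f(b)$ is negative (which happens when $a < 0$), but the monotonicity argument above handles all signs uniformly, so no further work is required.
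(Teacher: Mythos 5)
Your proof is correct and follows essentially the same route as the paper: compute $f'(x)=(1-p)^x\bigl(1+x\log(1-p)\bigr)$, identify the unique critical point $-1/\log(1-p)$, conclude that $f$ is unimodal (increasing then decreasing), and deduce that the minimum over $[a,b]$ is attained at an endpoint. You merely spell out the endpoint case analysis more explicitly than the paper does.
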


\begin{lem} \label{limit_x1-logxx}
$\lim_{x \to \infty} x(1- \frac{\log x}{x})^x = 1$.
\end{lem}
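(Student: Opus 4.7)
The plan is to take the natural logarithm and show it tends to $0$. Let $F(x) := \log\bigl(x(1-\tfrac{\log x}{x})^x\bigr) = \log x + x\log\bigl(1 - \tfrac{\log x}{x}\bigr)$, which is defined for all sufficiently large $x$ (so that $0 < \frac{\log x}{x} < 1$). I would show $\lim_{x\to\infty}F(x)=0$, from which the stated limit $e^0 = 1$ follows by continuity of the exponential.

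The key step is a Taylor expansion of $\log(1-t)$ around $t=0$. With $t := \frac{\log x}{x}$, which tends to $0$ as $x \to \infty$, I would use the estimate
\[
\log(1-t) = -t - \frac{t^2}{2} - \frac{t^3}{3} - \cdots,
\]
valid for $0 \leq t < 1$. Multiplying through by $x$ gives
\[
x\log\Bigl(1 - \frac{\log x}{x}\Bigr) = -\log x - \frac{(\log x)^2}{2x} - \frac{(\log x)^3}{3x^2} - \cdots.
\]
Hence
\[
F(x) = -\frac{(\log x)^2}{2x} - \frac{(\log x)^3}{3x^2} - \cdots.
\]

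To finish, I would bound the tail. For $x$ large enough that $t = \frac{\log x}{x} \leq \frac{1}{2}$, the series is dominated by a geometric series: $\bigl|\sum_{k\geq 2}\frac{t^k}{k}\bigr| \leq \sum_{k\geq 2}t^k = \frac{t^2}{1-t} \leq 2t^2$. Multiplying by $x$ yields $|F(x)| \leq 2x t^2 = \frac{2(\log x)^2}{x}$, which clearly tends to $0$ as $x\to\infty$. Therefore $F(x)\to 0$, so $x(1-\tfrac{\log x}{x})^x = e^{F(x)} \to 1$, as claimed.

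I do not anticipate any real obstacle: this is a standard asymptotic calculation, and the only minor care needed is in controlling the tail of the $\log(1-t)$ series uniformly for $t$ in a bounded interval away from $1$, which the geometric bound above handles cleanly.
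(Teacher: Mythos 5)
Your proof is correct, but it takes a different route from the paper. The paper evaluates $\lim_{x\to\infty}\bigl(\tfrac{\log x}{x}+\log(1-\tfrac{\log x}{x})\bigr)/x^{-1}$ by L'H\^{o}spital's rule, differentiating the numerator and denominator and simplifying the resulting rational-in-$\log$ expression. You instead expand $\log(1-t)$ as a power series at $t=\tfrac{\log x}{x}$, observe that the linear term exactly cancels the $\log x$ outside, and bound the remaining tail by a geometric series to get $|F(x)|\le \tfrac{2(\log x)^2}{x}$. Both arguments are sound; yours is more elementary (no differentiation of the composite expression) and has the added benefit of an explicit convergence rate $1+O\bigl(\tfrac{(\log x)^2}{x}\bigr)$, which would in principle let one extract a concrete threshold $d_0$ in Lemma~\ref{weighted_1} rather than a purely existential one. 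The paper's L'H\^{o}spital computation is shorter on the page but gives only the qualitative limit. Your tail estimate $\sum_{k\ge 2}t^k/k\le t^2/(1-t)\le 2t^2$ for $t\le\tfrac12$ is exactly the right uniform control, so there is no gap.
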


Now we prove Lemma~\ref{weighted_1}, completing the proof of Theorem \ref{fractional}.
For convenience, we restate it.

\noindent
\textbf{Lemma~\ref{weighted_1}.}~%
\textit{%
For every $\epsilon>0$, there exists an integer $d_0$ such that if $\Delta \geq d_0$, $G$ is a graph with maximum degree at most $\Delta$, $\Hy$ a hypergraph with $V(\Hy)=V(G)$ and $\rank(\Hy) \leq \Delta$, and $f: V(G) \rightarrow {\mathbb R}_{\geq 0}$ and $g: E(\Hy) \rightarrow {\mathbb R}_{\geq 0}$ are functions with $\sum_{v \in V(G)}f(v)+\sum_{z \in E(\Hy)}g(z)=1$, then there exists a stable set $S$ of $G$ such that $\sum_{v \in S}f(v) + \sum_{z \in E(\Hy), |z \cap S|=1}g(z) \geq
\frac{(1-\epsilon)^2}{(1+2\epsilon)\Delta}$.}

\begin{proof}
Fix $\epsilon>0$.\aside{$\epsilon$}  Note that $\lim_{x \to \infty}x^{-\epsilon^2/3}=0=\lim_{x \to \infty}\frac{\log x}{x}$ and recall (from Lemma \ref{limit_x1-logxx}), that $\lim_{x \to \infty} x(1- \frac{\log x}{x})^x = 1$.  
So there exists an integer \Emph{$d_0$} such that for every real number $d$, if $d \geq d_0$, then $\epsilon\ge \max\{2d^{-\epsilon^2/3}, \frac{\log d}{d}, 1- d(1- \frac{\log d}{d})^d, 1/\log d\}$. 
Let $\Delta$, $G$, $\Hy$, and functions $f,g$\aside{$\Delta$, $G$, $\Hy$, $f$, $g$} be as prescribed in the lemma.  
Let $p := \frac{\log \Delta}{\Delta}$.\aside{$p$}  

Let \Emph{$A$} be the subset of $V(G)$ obtained by, for each vertex $v$ of $G$, independently putting $v$ into $A$ with probability $p$.
For each $v \in V(G)$, let \Emph{$X_v$} be the random variable with $X_v=1$ if $v \in A$, and $X_v=0$ otherwise.
For each $v \in V(G)$, let \Emph{$Y_v$} be the random variable such that $Y_v=1$ if
$X_v=1$ and there are more than $(1+\epsilon)p\Delta$ neighbors $w$ of $v$ with
$X_w=1$; otherwise, $Y_v=0$.
Let $B := \{v \in V(G): X_v=1,Y_v=0\}$.\aside{$B$}
Hence 
$$
{\mathbb E}[\sum_{v \in B}f(v) + \sum_{\substack{z \in E(\Hy)\\ |z \cap B|=1}}g(z)] = 
\sum_{v \in V(G)}f(v) \cdot P(v \in B) + \sum_{z \in E(\Hy)} g(z) \cdot P(|z \cap B|=1).
$$

For every $v \in V(G)$, by the Chernoff bound (Lemma \ref{chernoff}), 

\begin{align*}
P(Y_v=1) &= P(X_v=1\mbox{ and }\sum_{w \in N(v)}X_w>(1+\epsilon)p\Delta) \\
&=  P(X_v=1) \cdot P\left(\sum_{w \in
N(v)}X_w>(1+\epsilon)\frac{\Delta}{\deg(v)} \cdot {\mathbb E}[\sum_{w \in
N(v)}X_w]\right) \\
&\leq  p \cdot 2e^{-\frac{1}{3}((1+\epsilon)\frac{\Delta}{\deg(v)}-1)^2{\mathbb E}[\sum_{w \in N(v)}X_w]} \\
&=  2p \cdot e^{-\frac{1}{3}((1+\epsilon)\frac{\Delta}{\deg(v)}-1)^2 \cdot p \cdot \deg(v)} \\
&=  2p \cdot e^{-\frac{1}{3}((1+\epsilon)\frac{\Delta}{\sqrt{\deg(v)}}-\sqrt{\deg(v)})^2 \cdot p} \\
& \leq 2p \cdot e^{-\frac{1}{3}(\epsilon\frac{\Delta}{\sqrt{\deg(v)}})^2 \cdot p} \\
& \leq 2p \cdot e^{-\frac{1}{3}\epsilon^2\Delta p} = 2pe^{-\frac{1}{3}\epsilon^2
\log\Delta} = 2p \cdot \Delta^{-\epsilon^2/3}. 
\end{align*}

Since $\Delta \geq d_0$, by our choice of $d_0$, we know $2\Delta^{-\epsilon^2/3} \le \epsilon$.
So $P(Y_v=1) \le p\epsilon$.
Hence, for every $v \in V(G)$, we know $P(v \in B) = P(X_v=1,Y_v=0) =
P(X_v=1)-P(X_v=1,Y_v=1) = P(X_v=1)-P(Y_v=1) \geq p-p\epsilon = p \cdot (1-\epsilon)$.

For each $z \in E(\Hy)$, note that $P(|z \cap B|=1) \geq \sum_{u \in z}P(X_u=1, Y_u=0, X_w=0$ for every $w \in z\setminus\{u\})$.
For each vertex $z \in E(\Hy)$ and $u \in z$, by the Chernoff bound, 
\begin{align*}
	& ~~~~P(Y_u=1|X_u=1\mbox{ and }X_w=0 {\rm \ for \ every \ } w \in z-\{u\}) \\
	& = P(\sum_{w \in N(u)\setminus z} X_w > (1+\epsilon)p\Delta) \\
	& = P(\sum_{w \in N(u)\setminus z} X_w > (1+\epsilon)\frac{\Delta}{|N(u) \setminus z|} {\mathbb E}[\sum_{w \in N(u)\setminus z} X_w]) \\
	& \leq 2e^{-\frac{((1+\epsilon)\frac{\Delta}{|N(u)\setminus z|}-1)^2}{3}{\mathbb E}[\sum_{w \in N(u)\setminus z} X_w]} \\
	& = 2e^{-\frac{((1+\epsilon)\frac{\Delta}{|N(u)\setminus z|}-1)^2}{3}p|N(u)\setminus z|} \\
	& = 2e^{-((1+\epsilon)\frac{\Delta}{\sqrt{|N(u)\setminus z|}}-\sqrt{|N(u)\setminus z|})^2 \cdot \frac{p}{3}} \\
	& \leq 2e^{-(\epsilon\frac{\Delta}{\sqrt{|N(u)\setminus z|}})^2 \cdot \frac{p}{3}} \\
	& \leq 2e^{-\epsilon^2 \Delta p/3} = 2e^{-\epsilon^2 \log\Delta /3} = 2\Delta^{-\epsilon^2/3} \leq \epsilon.
\end{align*}
So $P(Y_u=0|X_u=1, X_w=0$ for every $w \in z-\{u\}) \geq 1-\epsilon$. 
For each $z \in E(\Hy)$ and $u \in z$, we have $P(X_u=1, X_w=0$ for every $w \in z-\{u\}) = p(1-p)^{|z|-1} \geq p(1-p)^{|z|}$. 
Hence, for each $z \in E(\Hy)$ and $u \in z$, 
\begin{align*}
	& ~~~~P(X_u=1\mbox{ and }Y_u=0\mbox{ and } X_w=0 {\rm \ for \ all \ } w \in z\setminus\{u\}) \\
	& = P(X_u=1\mbox{ and }X_w=0 {\rm \ for \ all \ } w \in z\setminus\{u\}) \cdot
P(Y_u=0|X_u=1\mbox{ and }X_w=0 {\rm \ for \ all \ } w \in z\setminus\{u\}) \\
	& \geq p(1-p)^{|z|} \cdot (1-\epsilon). 
\end{align*}
All $z \in E(\Hy)$ satisfy $1 \leq |z| \leq \Delta$, so Lemma~\ref{x(1-p)x} gives $|z|(1-p)^{|z|} \geq \min\{1-p,\Delta(1-p)^{\Delta}\}$.  
Since $\Delta \geq d_0$, we get $1-p = 1- \frac{\log\Delta}{\Delta} \geq 1-\epsilon$ and $\Delta(1-p)^{\Delta} = \Delta(1-\frac{\log\Delta}{\Delta})^\Delta \geq 1-\epsilon$. 
Thus, by summing the probabilities of some disjoint events, we have
\begin{align*}
	P(|z \cap B|=1) & \geq \sum_{u \in z}P(X_u=1, Y_u=0, X_w=0 {\rm \
for \ every }\ w \in z\backslash\{u\}) \\
	& \geq |z| \cdot p(1-p)^{|z|} \cdot (1-\epsilon) \\ 
	& = (1-\epsilon)p \cdot |z|(1-p)^{|z|} \\ 
	& \geq (1-\epsilon)p \cdot \min\{1-p, \Delta(1-p)^{\Delta}\} \\ 
	& \geq (1-\epsilon)p \cdot (1-\epsilon) = (1-\epsilon)^2p.
\end{align*}

Hence 
\begin{align*}
	{\mathbb E}[\sum_{v \in B}f(v) + \sum_{\substack{z \in E(\Hy)\\ |z \cap B|=1}}g(z)] & = \sum_{v \in V(G)}f(v) \cdot P(v \in B) + \sum_{z \in E(\Hy)} g(z) \cdot P(|z \cap B|=1) \\
	 & \geq p(1-\epsilon) \sum_{v \in V(G)}f(v) + (1-\epsilon)^2p \sum_{z \in E(\Hy)}g(z) \\
	 & \geq p(1-\epsilon)^2 (\sum_{v \in V(G)}f(v)+ \sum_{z \in E(\Hy)}g(z)) 
	  = p(1-\epsilon)^2.
\end{align*}
So there exists $B^* \subseteq V(G)$ such that $G[B^*]$ has maximum degree at most $(1+\epsilon)p\Delta$ and $$\sum_{v \in B^*}f(v) + \sum_{\substack{z \in E(\Hy)\\ |z \cap B^*|=1}}g(z) \geq p(1-\epsilon)^2.$$

Since $G[B^*]$ has maximum degree at most $(1+\epsilon)p\Delta$, $G[B^*]$ is
properly $(\lfloor (1+\epsilon)p\Delta \rfloor+1)$-colorable.
Hence $B^*$ is a union of disjoint stable sets $S_1,S_2,...,S_{\lfloor (1+\epsilon)p\Delta \rfloor+1}$ in $G$.  
Note that for every $z \in E(\Hy)$, if $|z \cap B^*|=1$, then $|z \cap S_j|=1$ for some $j$.
So $$\sum_{j=1}^{\lfloor (1+\epsilon)p\Delta \rfloor+1}\sum_{\substack{z \in E(\Hy)\\ |z \cap S_j|=1}}g(z) \geq \sum_{\substack{z \in E(\Hy)\\ |z \cap B^*|=1}}g(z).$$
Hence 
\begin{align*}
	\sum_{j=1}^{\lfloor (1+\epsilon)p\Delta \rfloor+1}(\sum_{v \in S_j}f(v) + \sum_{\substack{z \in E(\Hy)\\ |z \cap S_j|=1}}g(z)) & \geq \sum_{v \in B^*}f(v) + \sum_{\substack{z \in E(\Hy)\\ |z \cap B^*|=1}}g(z) \\
	&\geq  p(1-\epsilon)^2.
\end{align*}

Therefore, there exists a subset $S$ of $B^*$ such that $S$ is a stable set in $G$ and 
\begin{align*}
	\sum_{v \in S}f(v) + \sum_{\substack{z \in E(\Hy)\\ |z \cap S|=1}}g(z) \geq & \frac{1}{\lfloor (1+\epsilon)p\Delta \rfloor+1} \cdot p(1-\epsilon)^2 \\
	\geq & \frac{1}{(1+2\epsilon)p\Delta} \cdot p(1-\epsilon)^2 = \frac{(1-\epsilon)^2}{(1+2\epsilon)\Delta}.
\end{align*}
\aftermath
\end{proof}

Finally, we prove Corollary~\ref{fractional_cor}.
We restate it here.

\medskip

\noindent{\bf Corollary~\ref{fractional_cor}.} {\it For every $\epsilon>0$, there exists $d_0$ such that if $\Delta \geq d_0$ and
$G$ is a graph with maximum degree at most $\Delta$, then there exists a proper
$(a:b)$-coloring $\vph$ of $G$ for some positive integers $a$ and $b$ with $a
\leq (1+\epsilon)\Delta b$ such that
\begin{enumerate}
	\item $\vph$ is a fractionally proper conflict-free $(a:b)$-coloring of $G$,
	\item for any set $C$ of colors with size less than $5b/2$, every component of the subgraph of $G$ induced by the vertices that use only colors in $C$ has at most two vertices, and
	\item $|\vph(v) \cap \vph(w)| \leq b/2$ for any distinct vertices $v,w$ of $G$.
\end{enumerate}}

\begin{proof}
Fix a graph $G$.  Let $\Hy$ be the hypergraph with $V(\Hy)=V(G)$ and $E(\Hy)$
consists of all nonempty subsets of $V(G)$ with size at most $\Delta$.
By Theorem \ref{fractional}, there exists a proper $(a:b)$-coloring $\vph$ of
$(G,\Hy)$ for some positive integers $a$ and $b$ with $a \leq (1+\epsilon)\Delta b$.
So for every non-isolated vertex $v$ of $G$, since $|N_G(v)| \leq \Delta(G)
\leq \Delta$, we have $N_G(v) \in E(\Hy)$, so at least $b$ colors appear
exactly once on $N_G(v)$.  This proves Statement 1.

For any two distinct vertices $v,w$ of $G$, since $|\{v,w\}| \leq \Delta$, by the definition of $E(\Hy)$ 
some edge of $\Hy$
is precisely $\{v,w\}$.  Since the coloring of $\Hy$ is conflict-free, by definition at
least $b$ colors appear exactly once on $\{v,w\}$. So $|\vph(v) \setminus
\vph(w)| + |\vph(w) \setminus \vph(v)| \geq b$, and hence $|\vph(v) \cap
\vph(w)| = (|\vph(v)|+|\vph(w)| - (|\vph(v) \setminus \vph(w)| + |\vph(w)
\setminus \vph(v)|))/2 \leq b/2$.  This proves Statement 3.

Now we prove Statement 2.
Suppose to the contrary that there exists a subset $S$ of $V(G)$ with $|S|=3$ such that $\bigcup_{v \in S}\vph(v) = C$ of size less than $5b/2$ and the subgraph of $G$ induced on $S$ is connected.
So some vertex $x$ in $S$ is adjacent to the other two vertices $y$ and $z$ in $S$.
By Statement 3, $|\vph(y) \cup \vph(z)| = |\vph(y)|+|\vph(z)|-|\vph(y) \cap \vph(z)| \geq 3b/2$. 
Since $\vph$ is proper, $\vph(x) \cap (\vph(y) \cup \vph(z)) = \emptyset$, so $|\vph(x) \cup \vph(y) \cup \vph(z)| = |\vph(x)|+|\vph(y) \cup \vph(z)| \geq 5b/2$, a contradiction.  
This proves Statement 2.
\end{proof}

\section*{Acknowledgments}
Thanks to an anonymous referee whose helpful suggestions shortened and simplified Section~\ref{sec2}.
Thanks also to Louis Esperet for helpful comments on an early draft of this paper.

\bibliographystyle{habbrv}
{\footnotesize{\bibliography{conflict-free-proper}}}

\section*{Appendix: 3 Omitted Proofs}
Here we include 3 proofs that we omitted from the body of the text.

\subsection{Proof of Lemma~\ref{lem:clm1_simple}}

We prove Lemma~\ref{lem:clm1_simple} by considering the range of possible values
of $d$.

\begin{lem}
\label{clm0-lem}
If $3\le {d}\le 9$ and $\beta \geq 0.6R$ and $R \geq 750$, then $$\sum_{i=1}^{\left\lfloor {d}/2\right\rfloor}E_i(d)\beta^{i-d+1}\le R^{-1/2}.$$
\end{lem}

\begin{proof}
Since $d \geq 3$ and $\beta \geq 0.6R$, $\sum_{i=1}^{\left\lfloor {d}/2\right\rfloor}E_i(d)\beta^{i-d+1}\le \sum_{i=1}^{\left\lfloor {d}/2\right\rfloor}E_i(d)(0.6R)^{i-d+1}$.
Note that $E_1(d)=1$, $E_i(2i) = \frac{(2i)!}{i!2^i}$ (by the second bound in Lemma \ref{two_basic_bound}), and $E_i(j) \leq E_i(k)$ for every $j \leq k$.
When $d=3$, $\sum_{i=1}^{\left\lfloor {d}/2\right\rfloor}E_i(d)(0.6R)^{i-d+1} = (0.6R)^{-1} \leq R^{-1/2}$.
When $d =4$, 
	\begin{align*}
		\sum_{i=1}^{\left\lfloor {d}/2\right\rfloor}E_i(d)(0.6R)^{i-d+1} = (0.6R)^{-2} + E_2(4)(0.6R)^{-1} \leq (1+\frac{4!}{2!2^2})(0.6R)^{-1}=4(0.6R)^{-1} \leq R^{-1/2}.
	\end{align*}
	
Since $E_i(d)$ is the number of partitions of the set $[d]$ into $i$ parts with extra properties, $E_i(d) \leq i^d$.
When $5 \leq d \leq 6$, 
\begin{align*}
\sum_{i=1}^{\left\lfloor {d}/2\right\rfloor}E_i(d)(0.6R)^{i-d+1} &\leq (0.6R)^{2-d} + E_2(d)(0.6R)^{3-d} + (d-5)E_3(6)(0.6R)^{4-d} \\
& \leq (0.6R)^{2-d} + 2^d(0.6R)^{3-d} + (d-5)\frac{6!}{3!2^3}(0.6R)^{4-d} \\ &\leq (1+2^6)(0.6R)^{-2} + 15(d-5)(0.6R)^{4-d}.
\end{align*}
So if $d=5$, then $\sum_{i=1}^{\left\lfloor {d}/2\right\rfloor}E_i(d)(0.6R)^{i-d+1} \leq 181/R^2 \leq R^{-1/2}$, since $R \geq 750$; if $d=6$, then $\sum_{i=1}^{\left\lfloor {d}/2\right\rfloor}E_i(d)(0.6R)^{i-d+1} \leq 181R^{-2} + 15(0.6R)^{-2} \leq R^{-1/2}$.

	When $7 \leq d \leq 8$, 
\begin{align*}
\sum_{i=1}^{\left\lfloor {d}/2\right\rfloor}E_i(d)(0.6R)^{i-d+1} & \leq (0.6R)^{2-d} + E_2(d)(0.6R)^{3-d} + E_3(d)(0.6R)^{4-d} + E_4(d)(0.6R)^{5-d} \\ 
& \leq (0.6R)^{-5} + 2^8(0.6R)^{-4} + 3^8(0.6R)^{-3} + \frac{8!}{4!2^4}(0.6R)^{-2} \\
& \leq (1+256+6561)(0.6R)^{-3} + 105(0.6R)^{-2} \\
& \leq 31565R^{-3} + 300R^{-2} = (31565R^{-2.5}+300R^{-1.5})R^{-1/2} \\
& \leq (31565 \cdot 750^{-2.5}+300 \cdot 750^{-1.5})R^{-1/2} \leq R^{-1/2}.
\end{align*}
When $d=9$, 
\begin{align*}
\sum_{i=1}^{\left\lfloor {d}/2\right\rfloor}E_i(d)(0.6R)^{i-d+1} & = (0.6R)^{-7} + E_2(9)(0.6R)^{-6} + E_3(9)(0.6R)^{-5} + E_4(9)(0.6R)^{-4} \\
& \leq 4 \cdot 4^9 (0.6R)^{-4} \leq 4^{10} \cdot (5/3)^4 \cdot 750^{-3.5} \cdot R^{-1/2} \leq R^{-1/2}.
\end{align*}
\aftermath
\end{proof}

\begin{lem}
\label{clm1a-lem}
If $10\le {d}\le \beta^{2/3}$ and $\beta \geq 0.6R \geq 14$, then 
	$$\sum_{i=1}^{\left\lfloor {d}/2\right\rfloor}E_i(d)\beta^{i-d+1}\le R^{-1/2}.$$
\end{lem}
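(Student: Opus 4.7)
The plan is to apply Lemma~\ref{S_2_simple_bound_proof}, which yields
\[
	\sum_{i=1}^{\lfloor d/2\rfloor} E_i(d)\,\beta^{i-d+1} \;\leq\; \beta\cdot\frac{(d/\beta)^{\lceil d/2\rceil}}{1-d/\beta}.
\]
Since $d\leq \beta^{2/3}$, we have $d/\beta\leq \beta^{-1/3}$, so $(d/\beta)^{\lceil d/2\rceil}\leq \beta^{-\lceil d/2\rceil/3}$; moreover, $d\geq 10$ forces $\beta\geq 10^{3/2}$, so the factor $1/(1-d/\beta)\leq 1/(1-10^{-1/2})$ is absorbed into a small universal constant. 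The bound thus collapses to essentially $\beta^{1-\lceil d/2\rceil/3}$. Rewriting the target $R^{-1/2}$ using $\beta\geq 0.6R$ (so that $R^{-1/2}\geq \sqrt{0.6/\beta}$), the desired inequality becomes a polynomial comparison $\beta^{3/2-\lceil d/2\rceil/3}\leq c$ for a fixed constant $c$.

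For $d\geq 11$ the exponent $3/2-\lceil d/2\rceil/3$ is at most $-1/2$, and the inequality holds with room to spare since $\beta\geq 11^{3/2}$. I would first dispose of this range by executing the chain of inequalities cleanly, using only the hypotheses $d\leq\beta^{2/3}$ and $\beta\geq 0.6R\geq 14$; the argument is a routine substitution.

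The main obstacle is the extremal case $d=10$, where the exponent is only $3/2-5/3=-1/6$, and Lemma~\ref{S_2_simple_bound_proof}'s generic estimate just barely fails when $\beta$ is close to its minimum allowed value $\lceil 10^{3/2}\rceil=32$ (where $R$ may be as large as $\lfloor\beta/0.6\rfloor=53$). I plan to dispatch this narrow residual range by direct computation in the spirit of Lemma~\ref{clm0-lem}, using the exact values $E_1(10)=1$, $E_2(10)=501$, $E_3(10)=6825$, $E_4(10)=9450$, $E_5(10)=945$, so that the sum equals
\[
	945\beta^{-4}+9450\beta^{-5}+6825\beta^{-6}+501\beta^{-7}+\beta^{-8},
\]
which is of order $\beta^{-4}\approx 10^{-3}$ at $\beta=32$ and is dwarfed by $R^{-1/2}\geq 53^{-1/2}\approx 0.137$. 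Combining this explicit boundary estimate with the generic argument above covers the full range $10\leq d\leq \beta^{2/3}$, completing the proof.
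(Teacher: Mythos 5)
Your proposal is correct and its skeleton is the same as the paper's --- both apply Lemma~\ref{S_2_simple_bound_proof} and then use $d/\beta\le\beta^{-1/3}$ --- but you handle the boundary more carefully, and that extra care is genuinely needed. The paper weakens its bound to $2\beta^{1-d/6}\le 2\beta^{-2/3}$ and then asserts $2\beta^{-2/3}\le(\beta/0.6)^{-1/2}$; that last inequality is equivalent to $\beta^{1/6}\ge 2/\sqrt{0.6}$, i.e.\ $\beta\ge(2/\sqrt{0.6})^6\approx 296$, whereas the hypotheses only force $\beta\ge 10^{3/2}\approx 31.6$. For $d\ge 11$ the sharper exponent $1-d/6\le -5/6$ repairs the chain, but for $d=10$ with $\beta$ roughly in $[32,296]$ and $R$ near $\beta/0.6$ the paper's argument as written does not close --- exactly the regime you isolate. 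Your exact values $E_i(10)=1,\,501,\,6825,\,9450,\,945$ are correct, and the resulting sum $945\beta^{-4}+9450\beta^{-5}+\cdots$ is about $1.2\cdot 10^{-3}$, far below $R^{-1/2}$. The one point to tighten in a final write-up: the explicit check must cover the whole residual window ($\beta$ from $32$ up to about $45$ in your sharper accounting), not just the single value $\beta=32$; since each term $E_i(10)\beta^{i-9}$ is decreasing in $\beta$ and $R^{-1/2}\ge\sqrt{0.6/\beta}>0.11$ throughout that window, the extension is immediate, but it should be stated. Net: your proof is complete and, on this lemma, more careful than the paper's own.
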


\begin{proof}
Since $10 \leq d \leq \beta^{2/3}$, we have $d/\beta \leq \beta^{-1/3} \leq 1/2$, so
$$\sum_{i=1}^{\left\lfloor {d}/2\right\rfloor}E_i(d)\beta^{i-d+1} \leq \beta
\frac{({d}/\beta)^{\left\lceil {d}/2\right\rceil}}{1-\frac{d}{\beta}} \leq
2\beta (\beta^{-\frac{1}{3}})^{\frac{d}{2}} = 2\beta^{1-\frac{d}{6}} \leq
2\beta^{-2/3} \le \left(\frac{\beta}{0.6}\right)^{-1/2}\le R^{-1/2},$$
where the first inequality follows from Lemma~\ref{S_2_simple_bound_proof}, and
the final two inequalities holds because $\beta \geq 14$ and because $\beta\ge 0.6R$.
\end{proof}

\begin{lem}
\label{clm1b-lem}
If $\beta^{2/3}\le {d}\le \beta^{19/20}$ and $\beta \geq \max\{0.6R,600\}$ and $R \geq 750$, then 
	$$\sum_{i=1}^{\left\lfloor {d}/2\right\rfloor}E_i(d)\beta^{i-d+1}\le R^{-1/2}.$$
\end{lem}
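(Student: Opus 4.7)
The plan is to invoke Lemma~\ref{S_2_simple_bound_proof} and then exploit both hypotheses on $d$: the lower bound $d\ge \beta^{2/3}$ forces the exponent $\lceil d/2\rceil$ of the dominant factor $(d/\beta)^{\lceil d/2\rceil}$ to be large, while the upper bound $d\le \beta^{19/20}$ forces the base $d/\beta\le \beta^{-1/20}$ to be strictly less than $1$. Concretely, I would start from
$$\sum_{i=1}^{\lfloor d/2\rfloor}E_i(d)\beta^{i-d+1}\ \le\ \beta\cdot\frac{(d/\beta)^{\lceil d/2\rceil}}{1-d/\beta}$$
given by Lemma~\ref{S_2_simple_bound_proof}. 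Using $d/\beta\le \beta^{-1/20}$ I would replace $(d/\beta)^{\lceil d/2\rceil}$ by $\beta^{-d/40}$ and bound the denominator by $1-\beta^{-1/20}\ge 1-600^{-1/20}$ (monotone in $\beta$, so worst at $\beta=600$). Combining with $d\ge\beta^{2/3}$ upgrades $\beta^{-d/40}$ to $\beta^{-\beta^{2/3}/40}$, so the whole sum is at most $\beta^{1-\beta^{2/3}/40}/(1-600^{-1/20})$.

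Because $\beta\ge 0.6R$ gives $R^{-1/2}\ge \sqrt{0.6}\,\beta^{-1/2}$, it suffices to show
$$\beta^{3/2-\beta^{2/3}/40}\ \le\ \sqrt{0.6}\,(1-600^{-1/20}) \qquad \text{for all } \beta\ge 600.$$
The right-hand side is a positive constant, numerically around $0.774\cdot 0.274\approx 0.212$. For the left-hand side I would set $g(\beta):=(3/2-\beta^{2/3}/40)\log\beta$ and show $g'(\beta)<0$ on $[600,\infty)$: since $600^{2/3}/40>3/2$, the factor $3/2-\beta^{2/3}/40$ is negative on this range, and $-\tfrac{1}{60}\beta^{-1/3}\log\beta$ is obviously negative, so both terms in $g'$ are negative. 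This reduces the inequality to a single numerical check at $\beta=600$, where the exponent $3/2-600^{2/3}/40\approx -0.278$ yields a left-hand side $\approx 600^{-0.278}\approx 0.168$, comfortably below $0.212$.

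The main obstacle, such as it is, is mostly bookkeeping: verifying the monotonicity of $\beta^{1-\beta^{2/3}/40}$ carefully enough that a single evaluation at the boundary $\beta=600$ discharges the inequality for all $\beta\ge 600$, and then running the numerical check with enough precision. There is no real conceptual difficulty here beyond choosing the right truncations (using $\lceil d/2\rceil\ge d/2$ and the worst case $d=\beta^{2/3}$), since Lemma~\ref{S_2_simple_bound_proof} already does the heavy combinatorial lifting.
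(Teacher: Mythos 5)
Your proof is correct and follows essentially the same route as the paper: both apply Lemma~\ref{S_2_simple_bound_proof}, use $d/\beta\le\beta^{-1/20}$ to reduce the sum to a constant times $\beta^{1-d/40}$, and then use $d\ge\beta^{2/3}$ to make this tiny. The only difference is in the final bookkeeping — the paper simply notes $d\ge 600^{2/3}\ge 71$ and substitutes $\beta\ge 0.6R$ into $4(0.6R)^{1-d/40}$, whereas you run a (valid) monotonicity-in-$\beta$ argument via the derivative of $(3/2-\beta^{2/3}/40)\log\beta$ and check the boundary value at $\beta=600$; both close the same way.
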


\begin{proof}
Since $\beta \geq 600$, we have $d/\beta \leq \beta^{-1/20} \leq 600^{-1/20}
\leq 3/4$, so by Lemma~\ref{S_2_simple_bound_proof},
	\begin{align*}
		\sum_{i=1}^{\left\lfloor {d}/2\right\rfloor}E_i(d)\beta^{i-d+1} & \le \beta \frac{({d}/\beta)^{\left\lceil {d}/2\right\rceil}}{1-\frac{d}{\beta}}\le 4\beta (\beta^{-\frac{1}{20}})^{{d}/2} = 4\beta^{1-\frac{d}{40}} \leq 4 \cdot (0.6R)^{1-\frac{d}{40}}. 
	\end{align*}
	Since $\beta \geq 600$, we have $d \geq \beta^{2/3} \geq 71$, so $4 \cdot (0.6R)^{1-d/40} \leq 4 \cdot (0.6R)^{-31/40} \leq 6R^{-31/40} \leq 6 \cdot 750^{-11/40} \cdot R^{-1/2}\le R^{-1/2}$, where the penultimate inequality follows from $R \geq 750$.
\end{proof}

\begin{cor}
Lemma~\ref{lem:clm1_simple} is true. 
\end{cor}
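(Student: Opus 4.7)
The plan is a short case analysis on the magnitude of $d$ relative to $\beta$. Given the hypotheses $3 \le d \le \beta^{19/20}$, $\beta \ge 0.6R \ge 14$, $\beta \ge 600$, and $R \ge 750$, I would split the admissible range of $d$ into the three subintervals that match the three preceding sub-lemmas: $3 \le d \le 9$ handled by Lemma~\ref{clm0-lem}, $10 \le d \le \beta^{2/3}$ handled by Lemma~\ref{clm1a-lem}, and $\beta^{2/3} \le d \le \beta^{19/20}$ handled by Lemma~\ref{clm1b-lem}.

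First I would check that these three subintervals cover $[3,\beta^{19/20}]$. Since $\beta \ge 600$, we have $\beta^{2/3} \ge 600^{2/3} > 71 > 10$, so the second and third subintervals join at $d = \beta^{2/3}$ and together cover $[10,\beta^{19/20}]$; combining with $[3,9]$ gives the full range. Then, within each subinterval, the hypotheses of the relevant sub-lemma are immediate from the hypotheses of Lemma~\ref{lem:clm1_simple}: Lemma~\ref{clm0-lem} needs only $\beta \ge 0.6R$ and $R \ge 750$; Lemma~\ref{clm1a-lem} needs only $\beta \ge 0.6R \ge 14$ (which holds since $R \ge 750$ gives $0.6R \ge 450$); and Lemma~\ref{clm1b-lem} needs $\beta \ge \max\{0.6R, 600\}$ and $R \ge 750$, which are explicitly assumed.

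Putting the three cases together yields $\sum_{i=1}^{\lfloor d/2 \rfloor} S_2(d,i)\beta^{i-d+1} \le R^{-1/2}$ throughout the range $3 \le d \le \beta^{19/20}$, which is the conclusion of Lemma~\ref{lem:clm1_simple}. There is no real obstacle here; the genuine work is already contained in the three sub-lemmas, each of which uses Lemma~\ref{S_2_simple_bound_proof} in conjunction with estimates tailored to its own range of $d$ (direct enumeration of partitions for the smallest values, and the geometric-series bound $\beta \cdot (d/\beta)^{\lceil d/2 \rceil}/(1 - d/\beta)$ for the larger values). The corollary itself is pure bookkeeping, so a one-line argument invoking the three lemmas on the three subintervals suffices.
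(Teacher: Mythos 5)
Your proposal is correct and matches the paper's proof, which likewise deduces Lemma~\ref{lem:clm1_simple} immediately from Lemmas~\ref{clm0-lem}, \ref{clm1a-lem}, and~\ref{clm1b-lem}; your extra checks that the three ranges cover $3\le d\le \beta^{19/20}$ and that each sub-lemma's hypotheses follow from those of Lemma~\ref{lem:clm1_simple} are exactly the bookkeeping the paper leaves implicit.
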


\begin{proof}
This follows immediately from Lemmas~\ref{clm0-lem}, \ref{clm1a-lem}, and~\ref{clm1b-lem}.
\end{proof}

\subsection{Proofs of Lemmas~\ref{x(1-p)x} and~\ref{limit_x1-logxx}}

\noindent \textbf{\Cref{x(1-p)x}.}
Let $p$ be a real number with $0<p<1$.  Let $a,b$ be real numbers.
If $f(x):=x(1-p)^x$ for every real number $x$,
then $f(x) \geq \min\{f(a),f(b)\}$ for every $x$ with $a \leq x \leq b$.

\begin{proof}
Let $t := -1/\log(1-p)$.
Since $\frac{{\rm d}f(x)}{{\rm d}x} = (1-p)^x+x(1-p)^x\log(1-p) =
(1-p)^x(1+x\log(1-p))$, $f(x)$ is increasing when $x \leq t$, and $f(x)$ is
decreasing when $x \geq t$.  Hence $f(x) \geq \min\{f(a),f(b)\}$ for every $x$
with $a \leq x \leq b$.
\end{proof}

\noindent \textbf{\Cref{limit_x1-logxx}.}
$\lim_{x \to \infty} x(1- \frac{\log x}{x})^x = 1$.

\begin{proof}
This is a straightforward application of L'H\^{o}spital's rule. 
We have
\begin{align*}
&~~~~\lim_{x\to\infty}\log\left(x\left(1-\frac{\log x}x\right)^x\right)= \lim_{x\to\infty}\left(\log(x)+x\log\left(1-\frac{\log x}x\right)\right) \\
&= \lim_{x\to\infty}\left(\frac{\log(x)}x+\log\left(1-\frac{\log x}x\right)\right)/x^{-1}\\
&\stackrel{\text{H}}{=}
 \lim_{x\to\infty}\left(\frac1{x^2} -\frac{\log x}{x^2} +\left(1-\frac{\log(x)}x\right)^{-1}\left(\frac{-1+\log x}{x^2}\right)\right)/(-x^{-2})\\
&=\lim_{x\to\infty}\left(-1+\log x-\frac{x(\log x-1)}{x-\log x}\right)=\lim_{x\to\infty}\frac{\log x-\log^2x}{x-\log x} =0.
\end{align*}
So 
\begin{align*}
&~~~~\lim_{x\to\infty}x\left(1-\frac{\log x}x\right)^x 
= \lim_{x\to\infty}\exp\left(\log\left(x\left(1-\frac{\log x}x\right)^x\right)\right)=1 
\end{align*}
\aftermath
\end{proof}

\end{document}